\numberwithin{equation}{section}
\renewcommand{\emph}[1]{\textsf{\textit{#1}}}
\let\oldtocsection=\tocsection
\let\oldtocsubsection=\tocsubsection
\renewcommand{\tocsection}[2]{\hspace{0em}\oldtocsection{#1}{#2}}
\renewcommand{\tocsubsection}[2]{\hspace{2em}\oldtocsubsection{#1}{#2}}
\begin{document}
\fontdimen8\textfont3=0.5pt  


\def \Ai {{\rm Ai}}
\def \Pf {{\rm Pf}}
\def \sgn {{\rm sgn}}
\def \SS {\mathcal{S}}
\newcommand{\EE}{\ensuremath{\mathbb{E}}}
\newcommand{\Var}{\mathrm{Var}}
\newcommand{\PP}{\ensuremath{\mathbb{P}}}
\newcommand{\R}{\ensuremath{\mathbb{R}}}
\newcommand{\C}{\ensuremath{\mathbb{C}}}
\newcommand{\Z}{\ensuremath{\mathbb{Z}}}
\newcommand{\N}{\ensuremath{\mathbb{N}}}
\newcommand{\Q}{\ensuremath{\mathbb{Q}}}
\newcommand{\T}{\ensuremath{\mathbb{T}}}
\newcommand{\I}{\ensuremath{\mathbf{i}}}
\newcommand{\Real}{\ensuremath{\mathfrak{Re}}}
\newcommand{\Imag}{\ensuremath{\mathfrak{Im}}}
\newcommand{\subs}{\ensuremath{\mathbf{Subs}}}
\newcommand{\dist}{\textrm{dist}}
\newcommand{\Res}[1]{\underset{{#1}}{\mathbf{Res}}}
\newcommand{\Resfrac}[1]{\mathbf{Res}_{{#1}}}
\newcommand{\Sub}[1]{\underset{{#1}}{\mathbf{Sub}}}
\newcommand{\la}{\lambda}
\newcommand{\ta}{\theta}
\newcommand{\labold}{\boldsymbol{\uplambda}}
\def\note#1{\textup{\textsf{\color{blue}(#1)}}}
\newcommand{\oldrho}{\rho}
\renewcommand{\rho}{\varrho}
\newcommand{\e}{\epsilon}
\newcommand{\eps}{\varepsilon}
\renewcommand{\geq}{\geqslant}
\renewcommand{\leq}{\leqslant}
\renewcommand{\ge}{\geqslant}
\renewcommand{\le}{\leqslant}

\newcommand{\qq}[1]{(q;q)_{#1}}
\newcommand{\bel}[1]{b_{#1}^{\mathrm{el}}}
\newcommand{\ve}{\mathcal{E}}
\newcommand{\Y}{\ensuremath{\mathbb{Y}}}
\newcommand{\Sym}{\ensuremath{\mathbf{Sym}}}

\newcommand{\kernel}{\mathsf{K}}
\newcommand{\fkernel}{\mathsf{f}}
\newcommand{\gkernel}{\mathsf{g}}
\newcommand{\hkernel}{\mathsf{h}}
\newcommand{\scaling}[1]{ \mathfrak{s}(#1)}
\def \Pf {{\rm Pf}}

\newcommand{\p}{\mathsf{p}}
\newcommand{\q}{\mathsf{q}}
\newcommand{\ratealpha}{\upalpha}
\newcommand{\rategamma}{\upgamma}
\newcommand{\ratebeta}{\upbeta}
\newcommand{\ratedelta}{\updelta}
\newcommand{\gap}{\ensuremath{\mathrm{gap}}}
\newcommand{\Weyl}[1]{\mathbb{W}^{#1}}

\newcommand{\rightarrowplus}{\overset{+}{\longrightarrow}}
\newcommand{\leftarrowplus}{\overset{+}{\longleftarrow}}
\newcommand{\rightarrowminus}{\overset{-}{\longrightarrow}}
\newcommand{\leftarrowminus}{\overset{-}{\longleftarrow}}


\newcommand{\Leta}{\mathscr{L}^{\mathbb Z_{>0}}}
\newcommand{\Letafullspace}{\mathscr{L}^{\mathbb Z}}
\newcommand{\Letasegment}{\mathscr{L}^{\llbracket 1,\ell-1\rrbracket}}
\newcommand{\Lpart}{\mathscr{D}}
\newcommand{\Lzero}{\mathscr{L}^{\circ}}
\newcommand{\Lzerozero}{\mathscr{L}^{\circ\circ}}


\usetikzlibrary{patterns}
\usetikzlibrary{shapes.multipart}
\usetikzlibrary{arrows}

\tikzstyle{axis}=[->, >=stealth', thick, gray]
\tikzstyle{grille}=[dotted, gray]
\tikzstyle{path}=[->, >=stealth', thick]


\newtheorem{theorem}{Theorem}[section]
\newtheorem{conjecture}[theorem]{Conjecture}
\newtheorem{lemma}[theorem]{Lemma}
\newtheorem{proposition}[theorem]{Proposition}
\newtheorem{corollary}[theorem]{Corollary}

\newtheorem{theoremintro}{Theorem}
\renewcommand*{\thetheoremintro}{\Alph{theoremintro}}

\theoremstyle{definition}
\newtheorem{remark}[theorem]{Remark}

\theoremstyle{definition}
\newtheorem{example}[theorem]{Example}

\theoremstyle{definition}
\newtheorem{definition}[theorem]{Definition}

\theoremstyle{definition}
\newtheorem{definitions}[theorem]{Definitions}


\title{\large Markov duality and Bethe ansatz formula for half-line open ASEP}

\author[G. Barraquand]{Guillaume Barraquand}
\address{G. Barraquand,
	Laboratoire de Physique de l'Ecole Normale Supérieure, 
	ENS, Université PSL, CNRS, Sorbonne Université, Université Paris Cité, F-75005 Paris, France}
\email{barraquand@math.cnrs.fr}
\author[I. Corwin]{Ivan Corwin}
\address{I. Corwin, Columbia University,
	Department of Mathematics,
	2990 Broadway,
	New York, NY 10027, USA.}
\email{ivan.corwin@gmail.com}

\begin{abstract}
Using a Markov duality satisfied by ASEP on the integer line, we deduce similar dualities for half-line open ASEP and open ASEP on a segment. This leads to closed systems of ODEs characterizing observables of the models. In the half-line case, we solve the system of ODEs using Bethe ansatz and prove an integral formula for $q$-moments of the integrated current at $n$ distinct spatial locations. We then use this formula to confirm predictions for the moments of the multiplicative noise stochastic heat equation on $\mathbb R_{>0}$ with Robin type boundary condition and we obtain new formulas in the case of a Dirichlet boundary condition.   
\end{abstract}

\maketitle


\section{Introduction}
The asymmetric simple exclusion process (ASEP) on the one-dimensional lattice is an exactly solvable model, in the sense that the distribution of the system -- and other observables -- can be computed exactly using coordinate Bethe ansatz. This was first discovered in the totally asymmetric case  \cite{schutz1997exact} (see also \cite{johansson2000shape} for exact formulas of a different flavour). The partially asymmetric case  was then treated in \cite{tracy2008integral, tracy2008fredholm, tracy2011erratum} and led to important progress on the asymptotic behaviour of ASEP \cite{tracy2009asymptotics} and the KPZ equation \cite{amir2011probability, sasamoto2010exact}. Quite often, exactly solvable probabilistic models admit half-space variants that are also exactly-solvable  and can be analyzed asymptotically \cite{baik2001algebraic, baik2001asymptotics,sasamoto2004fluctuations,  gueudre2012directed, o2014geometric,baik2018pfaffian,  betea2018free, barraquand2018stochastic, barraquand2018half, Krajenbrink2020, barraquand2022random, imamura2022solvable, he2022shift}. More generally, in the context of quantum integrable systems -- where the coordinate Bethe ansatz method originally comes from \cite{bethe1931theorie} --  there exists a huge literature studying how to impose boundary conditions that preserve integrability. For the half-line open ASEP, however, exact computations of the distribution of the integrated  current and its asymptotic analysis have remained mostly out of reach. 

There are two notable exceptions. In the special case of the ASEP on $\Z_{>0}$ with closed boundary conditions (no particles injected or ejected at the boundary), \cite{tracy2013bose}  computed the transition probabilities of the system via coordinate Bethe ansatz. An application to open ASEP was discussed in \cite{tracy2013asymmetric}, but the resulting formulas are fully explicit only in the special cases of the totally asymmetric (TASEP) and symmetric (SSEP) exclusion processes. For special values of the boundary parameters (imposing a density $\rho=1/2$, according to the notations used below), one exact formula was also obtained via the framework of  half-space Macdonald processes in \cite{barraquand2018stochastic}, and allows to analyze the system asymptotically. However, that formula characterizes only the distribution of the integrated  current at the origin, and for a special value of boundary parameters. After the posting of this paper on arXiv, \cite{he2023boundary} extended the results of \cite{barraquand2018stochastic}, using identities from \cite{imamura2021skew}, allowing two generic boundary parameter, though still restricting to the study of the integrated current at the origin. 

An immediate difficulty that arises with  open systems to apply coordinate Bethe ansatz is that the number of particles is not conserved. This issue can be resolved using an idea that was already very useful for the ASEP on $\Z$. Using coordinate Bethe ansatz, \cite{tracy2008integral} obtained integral formulas for transition probabilities $P_t(\vec x,\vec y)$ for ASEP on $\Z$, where $\vec x$ and $\vec y$ denote the positions of $N$ particles locations. They further showed that certain observables, for instance the distribution of a tagged particle, can be written as a formula where one may let the number $N$ of particles go to infinity, and that such observables can be written as Fredholm determinants \cite{tracy2008fredholm}. Analyzing such Fredholm determinants asymptotically is highly non-trivial and required clever manipulations to transform Fredholm determinants \cite{tracy2009asymptotics}. Another approach to analyze ASEP on $\Z$ was proposed in \cite{imamura2011current, borodin2012duality}, using a Markov duality, originally discovered in \cite{schutz1997duality}. We review this notion in Section \ref{sec:duality}. This approach through duality allows, in general, to study particle systems on a complicated state space (e.g. a system with possibly infinitely many particles) by reducing the problem to diagonalizing a dual Markov process on a more convenient state space (e.g. a system involving finitely many particles). In the case of the full-line ASEP, the dual system is the ASEP with a fixed finite number of particles and is thus Bethe ansatz solvable. 

In this paper, we adapt this approach to the half-line open ASEP. In that case, the number of particles in the system is a priori random, and it is useful to reduce to a dual particle system with a fixed number of particles. We do not solve the most general case but impose a condition on the boundary injection/ejection rate that was already used in \cite{liggett1975ergodic}.
 We show that this condition  allows to deduce a Markov duality for half-line open ASEP from the full-space duality. Despite the restriction on boundary parameters, we can still study the system with an arbitrary boundary density parameter. 
 Our main result, Theorem \ref{theo:momentformula}, is an exact integral formula for the $q$-moments of the integrated current at $n$ distinct spatial locations (the parameter $q$ denotes the asymmetry, see Definition \ref{def:halflineASEP} below). It is obtained by explicitly solving a system of ODEs coming from the duality (Proposition \ref{prop:freeevolutionu}) which characterizes these $q$-moments. Our solution takes the form of a sum over partitions of integrals having a structure similar with solutions of the half-space delta Bose gas from \cite{borodin2016directed}. 

 Markov dualities for open ASEP on a half-line or a segment have been investigated in the literature  \cite{ohkubo2017dualities, kuan2021algebraic, kuan2019stochastic, kuan2021short,  kuan2022two, schutz2022reverse} under various assumptions on boundary rates and using different duality functionals. We review these works and compare them to our results in Remark \ref{rem:dualityhalfline} (in the case of half-line open ASEP) and Remark \ref{rem:dualityopenasep} (in the case of open ASEP on a segment). 
  Our duality  (Theorem \ref{theo:dualityhalf-space}) is a duality between half-line open ASEP generator and a sub-Markovian generator corresponding to  half-line ASEP with a constant number $n$ of particles elastically reflected at the boundary. As a side result, we also explain in Section \ref{sec:segment} that in the case of open ASEP on a segment $\lbrace 1, \dots, \ell\rbrace$, our approach also yields a duality (Theorem \ref{theo:dualitysegment}). This allows as well to deduce a closed system of ODEs characterizing the $q$-moments of the integrated current, but so far we do not know how to explicitly solve it. 

Since ASEP converges in the weakly asymmetry limit to the Kardar-Parisi-Zhang (KPZ) equation \cite{bertini1997stochastic, corwin2016open, parekh2017kpz, gonccalves2020derivation, yang2020kardar}, our formula also sheds light on the half-line KPZ equation. In particular, we prove moment formulas for the KPZ equation with Neumann type boundary (Theorem \ref{theo:momentsKPZ}). Although it seems to us that Theorem \ref{theo:momentsKPZ} has not been proved in the literature, the result was anticipated in \cite{borodin2016directed} and could be proved alternatively using other discrete models (see Remark \ref{rem:otherapproach}). However,  we also obtain new results that are related to the half-line KPZ equation with Dirichlet boundary condition (in the sense that the exponential of the solution satisfies a Dirichlet boundary condition). These results are used in the forthcoming paper \cite{barraquand2022weakly} to prove the convergence of half-line ASEP with high boundary density to the half-line KPZ equation with Dirichlet boundary condition.

\subsection*{Acknowledgments} We thank Oriane Blondel, Chiara Franceschini, Jeffrey Kuan,  Yier Lin, Tomohiro Sasamoto and Marielle Simon for useful conversations related to the results in this paper. I.C. was partially supported by the NSF through grants DMS:1937254, DMS:1811143, DMS:1664650, as well as through a Packard Fellowship in Science and Engineering, a Simons Fellowship, a Miller Visiting Professorship from the Miller Institute for Basic Research in Science, and a W.M. Keck Foundation Science and Engineering Grant. G.B. was partially supported by NSF through grant DMS:1664650 and by the ANR grants ANR-19-CE40-0012 and ANR-21-CE40-0019. Both G.B. and I.C. also wish to acknowledge the NSF grant DMS:1928930 which supported their participation in a fall 2021 semester program at MSRI in Berkeley, California. 

\section{An exact formula for the integrated currents in half-line open ASEP} 
\label{sec:halfline}
We start with some definitions and notations. 
\begin{definition}
	The \emph{half-line (open) ASEP} is a continuous time Markov process on the state-space $\lbrace 0,1\rbrace^{\Z_{>0}}$. It describes the stochastic evolution of a system of particles on $\Z_{>0}$ satisfying the exclusion rule -- each site is occupied by at most one particle. The particle configurations can be described by occupation variables $\big\lbrace \eta =  \big(\eta_x\big)_{x\in\Z_{>0}}\in \lbrace 0,1\rbrace^{\Z_{>0}} \big\rbrace$ where $\eta_x=0$ when the site $x$ is empty, and $\eta_x=1$ when it is occupied by a particle. The state $\eta(t)$ at time $t$ evolves according to the following dynamics, depicted in Fig. \ref{fig:generalASEP}, depending on nonnegative real parameters $\p, \q, \ratealpha$ and $\rategamma$: for any $x\in \Z_{>0}$, a particle jumps from site $x$ to $x+1$ at exponential rate
	\begin{equation*}\p\ \eta_x(1-\eta_{x+1}) \in \lbrace 0,\p\rbrace, 
	\end{equation*} and from site $x+1$ to $x$ at exponential rate 
\begin{equation*}\q\ \eta_{x+1}(1-\eta_x)\in\lbrace 0,\q\rbrace.
\end{equation*} 
Further,  a particle is created or annihilated at the site $1$ at exponential rates
	\begin{equation*} \ratealpha\ (1-\eta_1) \ \  \text{and}\ \  \rategamma\ \eta_1.
	\end{equation*}
	All these events are independent. We will restrict our attention to the initial conditions with finitely many particles in the system, such as the empty initial condition. Following \cite{schutz1997duality}, we define the integrated current at site $x$ by
	\begin{equation} N_x(\eta) = \sum_{i=x}^{+\infty} \eta_i.
		\label{eq:defNx}
		\end{equation} 
	We denote by $\Leta$ the generator associated to this Markov process, that is the operator acting on local  functions $f: \lbrace 0,1\rbrace^{\Z_{>0}} \to \R$ by 
	\begin{multline}
	\Leta f(\eta)  =  \ratealpha (1-\eta_1) \left( f(\eta^+)-f(\eta)\right) + \rategamma \eta_1 \left( f(\eta^-)-f(\eta)\right) \\ 
	+ \sum_{x=1}^{\infty} \left(  \p\eta_x(1-\eta_{x+1}) +  \q\eta_{x+1}(1-\eta_{x}) \right) \left( f(\eta^{x,x+1})-f(\eta)\right), 
	\label{eq:defLeta}
	\end{multline}
	where $\eta^{x,x+1}$ is the configuration obtained from $\eta$ by exchanging the values of $\eta_x$ and $\eta_{x+1}$, and $\eta^+$ (resp. $\eta^-$) is obtained from $\eta$ by setting $\eta_1=1$ (resp. $\eta_1=0$). 
	\label{def:halflineASEP}
\end{definition}
\begin{figure}
			\vspace{-0.3cm}
	\begin{tikzpicture}[scale=0.7]
	\draw[thick] (-1, 0) circle(1);
	\draw (-1,0) node{\footnotesize reservoir};
	\draw[thick] (0, 0) -- (12.5, 0);
	\foreach \x in {1, ..., 12} {
		\draw[gray] (\x, 0.15) -- (\x, -0.15) node[anchor=north]{\footnotesize $\x$};
	}
	
	\fill[thick] (3, 0) circle(0.2);
	\fill[thick] (6, 0) circle(0.2);
	\fill[thick] (7, 0) circle(0.2);
	\fill[thick] (10, 0) circle(0.2);
	\draw[thick, -stealth] (3, 0.3)  to[bend left] node[midway, above]{$\p$} (4, 0.3);
	\draw[thick, -stealth] (6, 0.3)  to[bend right] node[midway, above]{$\q$} (5, 0.3);
	\draw[thick, -stealth] (7, 0.3) to[bend left] node[midway, above]{$\p$} (8, 0.3);
	\draw[thick, -stealth] (10, 0.3) to[bend left] node[midway, above]{$\p$} (11, 0.3);
	\draw[thick, -stealth] (10, 0.3) to[bend right] node[midway, above]{$\q$} (9, 0.3);
	\draw[thick, -stealth] (-0.1, 0.5) to[bend left] node[midway, above]{$\ratealpha$} (0.9, 0.4);
	\draw[thick, stealth-] (0, -0.5) to[bend right] node[midway, below]{$\rategamma$} (0.9, -0.4);
	\end{tikzpicture}
			\vspace{-0.3cm}
	\caption{Jump rates in the half-line ASEP.}
	\label{fig:generalASEP}
\end{figure}
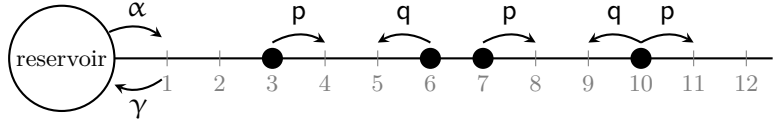

We will denote the ratio of jump rates  by $q:= \q/\p$ and assume that $q\in (0,1)$. We define the functions
\begin{equation}  
	Q_x(\eta) := q^{N_x(\eta)}.
	\label{eq:defQx}
\end{equation} 
Finally, most of our results hold under a condition on the boundary parameters, 
\begin{equation}
\frac{\ratealpha}{\p}+\frac{\rategamma}{\q} = 1.
\label{eq:Liggettcondition}
\end{equation} 
This condition comes from the work \cite[see page 240]{liggett1975ergodic}. Introducing the density parameter  $\rho:=\ratealpha/\p$, the condition \eqref{eq:Liggettcondition} ensures that  the injection rate is $\p \rho$ and the ejection rate is $\q(1-\rho)$, as if the injection and ejection rates resulted from a fictitious site occupied with probability $\rho$ and the particles were traveling between this fictitious site and the site $1$ with the same rates $\p, \q$ as in the bulk. We will make this interpretation more precise in Section \ref{sec:dualityhalfline}.

\subsection{Background on Markov duality}
\label{sec:duality}

We say that two Markov processes $X(t)$ and $Y(t)$, defined respectively on state-spaces $\mathbb X$ and $\mathbb Y$, are dual with respect to a function $H:\mathbb X\times \mathbb Y\to \R$, if for all $x\in\mathbb X, y\in\mathbb Y$, and all $t\geq 0$, 
\begin{equation} \mathbb E^x\left[H(X(t),y )  \right] =  \mathbb E_y\left[H(x, Y(t) )   \right],
\label{eq:defduality}
\end{equation}
where in the L.H.S., $\mathbb E^x$ denotes the expectation with respect to the Markov process $X(t)$ started from $X(0)=x$, and in the R.H.S., $\mathbb E_y$ denotes the expectation with respect to the Markov process $Y(t)$ started from $Y(0)=y$. When $\mathbb X=\mathbb Y$ and $X_t$ and $Y_t$ have the same distribution, we say that the Markov process $X_t$ satisfies a self-duality with respect to $H$. 
Going back to the general case, taking derivatives in \eqref{eq:defduality} with respect to $t$, and assuming the existence of generators of these processes, Markov duality implies that 
\begin{equation}
\mathscr L^X H(x,y)=\mathscr L^Y H(x,y),
\label{eq:dualitygenerator}
\end{equation} 
where $\mathscr L^X$ (resp. $\mathscr L^Y$) acts on functions from $\mathbb X$ to $\R$ (resp. from $\mathbb Y$ to $\R$) and denotes the generator of the Markov process $X(t)$ (resp. $Y(t)$). When $H$ is uniformly bounded and belongs to the domain of $L^X$ for any $y$ (resp. to the domain of $L^Y$ for any $x$), then  \eqref{eq:dualitygenerator} also implies \eqref{eq:defduality} (see \cite[Prop. 1.2]{jansen2014}).

The full-space ASEP is the Markov process on $\lbrace 0,1\rbrace^{\mathbb Z}$ with generator $\Letafullspace$, acting on local functions $f:\lbrace 0,1\rbrace^{\Z}\to\R$ by 
\begin{equation}
\Letafullspace f(\eta)  =  
\sum_{x\in \mathbb Z} \left(  \p\eta_x(1-\eta_{x+1}) +  \q\eta_{x+1}(1-\eta_{x}) \right) \left( f(\eta^{x,x+1})-f(\eta)\right). 
\label{eq:defgeneratorfullspace}
\end{equation}
In order to state ASEP's duality, it is convenient to introduce, for  $y\in \Z$, the sets  
\begin{equation*}  \Weyl{n}_{\geqslant y} :=\left\lbrace \vec x\in \Z^n : y\leqslant x_1<\dots < x_n \right\rbrace, \;\;\;\;\Weyl{n} :=\left\lbrace \vec x\in \Z^n :  x_1<\dots < x_n \right\rbrace.
\end{equation*}
We introduce another operator $\Lpart^{(n)}$ acting on functions $f:\Weyl{n}\to \R$ by  
\begin{equation}
	\Lpart^{(n)} f(\vec x)  = \sum_{\underset{x_i-x_{i-1}>1}{1\leqslant i\leqslant n} } \p\left( f(\vec x_i^-) -f(\vec x)\right) + \sum_{   \underset{x_{i+1}-x_{i}>1}{1\leqslant i\leqslant n}} \q\left( f(\vec x_i^+) -f(\vec x)\right),
	\label{eq:defLpart}
\end{equation}
where for $\vec x\in \Weyl{n}$, $\vec x_i^{\pm}:=(x_1, \dots, x_i\pm 1, \dots, x_n)$, and we use the notational convention that $x_0=-\infty$ and $x_{n+1}=+\infty$.
The operator $\Lpart^{(n)} $ is also the generator of a continuous Markov process on the state space $\Weyl{n}$: the full-space ASEP with $n$ particles, where the state of the particle system is now described by particle positions  $x_i$ rather than occupation variables $\eta_i$ as previously.
Note that in \eqref{eq:defgeneratorfullspace}, $\p$ and $\q$ are the jump rates associated to right and left, while in \eqref{eq:defLpart}, we have chosen the opposite  orientation. We will use the following Markov duality.  
\begin{proposition}[{\cite[Theorem 4.2, Proposition 4.7]{borodin2012duality}}] Fix $n\geqslant 1$. 
The full-space ASEP with right and left jump rates $\p$ and $\q$ (i.e. the Markov process with generator \eqref{eq:defgeneratorfullspace}) and  the $n$-particle ASEP with right and left jump rates $\q$ and $\p$ (i.e. the Markov process with generator \eqref{eq:defLpart}) are dual with respect to 
\begin{equation}
H(\eta,\vec x) := \prod_{i=1}^n Q_{x_i}(\eta),
\label{eq:defH}
\end{equation}
where we have extended the definitions of $Q_x(\eta)$ and $N_x(\eta)$ in \eqref{eq:defQx} and \eqref{eq:defNx} to negative values of $x$ and $\eta\in \lbrace 0,1\rbrace^{\Z}$. 
In particular, for any $\eta\in \lbrace 0,1\rbrace^{\Z}$ and $\vec x\in \Weyl{n}$, 
\begin{equation*}
\Letafullspace H(\eta, \vec x) = \Lpart^{(n)} H(\eta, \vec x). 
\label{eq:dualityfullspace}
\end{equation*}
\label{prop:dualityfull-space}
\end{proposition}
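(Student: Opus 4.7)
The plan is to establish the infinitesimal identity $\Letafullspace H(\eta,\vec x) = \Lpart^{(n)} H(\eta,\vec x)$ for every $\eta\in\{0,1\}^{\Z}$ and $\vec x\in\Weyl{n}$, and then upgrade it to the semigroup-level duality \eqref{eq:defduality} using the criterion of \cite[Prop.~1.2]{jansen2014}. That criterion applies because $H$ is uniformly bounded in $(0,1]$ (since $q\in(0,1)$ and $N_{x_i}\geq 0$ after the extension to $\vec x\in\Weyl{n}$) and the two generators have the standard conservative jump structure, which puts $H$ in both domains.

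For the generator identity, I would exploit that $H(\eta,\vec x)=\prod_{i=1}^n q^{N_{x_i}(\eta)}$ is a product of ``current'' exponentials and analyze its variation under elementary moves. On the $\eta$-side, a swap $\eta\mapsto\eta^{y,y+1}$ changes $N_{x_i}(\eta)=\sum_{k\geq x_i}\eta_k$ only when exactly one of $y,y+1$ lies in the summation range, i.e.\ when $y+1=x_i$, in which case $N_{x_i}(\eta^{y,y+1})-N_{x_i}(\eta)=\eta_y-\eta_{y+1}$. Because the coordinates of $\vec x$ are strictly increasing, each bond affects at most one factor in $H$; a case-by-case check on $(\eta_{x_i-1},\eta_{x_i})\in\{0,1\}^2$, combined with the algebraic identity $\q(q^{-1}-1)=\p-\q=\p(1-q)$ (from $q=\q/\p$), collapses the contribution of the bond $(x_i-1,x_i)$ to $\p(1-q)(\eta_{x_i}-\eta_{x_i-1})H$. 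Summing over $i$ yields
\[ \Letafullspace H(\eta,\vec x)=\p(1-q)\,H(\eta,\vec x)\sum_{i=1}^n\bigl(\eta_{x_i}-\eta_{x_i-1}\bigr). \]

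On the particle side, shifting $x_i\mapsto x_i\pm 1$ multiplies $Q_{x_i}(\eta)$ by $q^{\eta_{x_i-1}}$ or $q^{-\eta_{x_i}}$ respectively, and the same identity $\q(q^{-1}-1)=\p(1-q)$ gives
\[ \Lpart^{(n)} H(\eta,\vec x)=\p(1-q)\,H(\eta,\vec x)\left[\sum_{i:\,x_{i+1}>x_i+1}\eta_{x_i}-\sum_{i:\,x_i>x_{i-1}+1}\eta_{x_i-1}\right]. \]
Subtracting the two right-hand sides leaves $\sum_{i:\,x_{i+1}=x_i+1}\eta_{x_i}-\sum_{i:\,x_i=x_{i-1}+1}\eta_{x_i-1}$, which vanishes under the reindexing $j=i+1$ that maps $\{i:x_{i+1}=x_i+1\}$ bijectively to $\{j:x_j=x_{j-1}+1\}$ with $\eta_{x_i}=\eta_{x_j-1}$.

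The main obstacle is the bookkeeping, and in particular the following subtlety: bonds $(x_i-1,x_i)$ with $x_{i-1}=x_i-1$ still contribute on the $\eta$-side, even though the corresponding left jump of particle $x_i$ is forbidden by exclusion on the $\vec x$-side. The final telescoping cancellation above is precisely what resolves this mismatch; once one sees this, everything else is a direct rewriting, and the boundedness/domain verification needed for the semigroup step of \cite{jansen2014} is routine because $H$ takes values in $[0,1]$.
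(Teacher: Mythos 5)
Your generator computation is correct and is in substance the same argument as the one behind this statement: the paper does not reprove the proposition but quotes it from \cite{borodin2012duality}, remarking that it is proved there ``using an explicit computation of how generators act on $H$'' and comparing the two expressions --- which is exactly what you do. Your bookkeeping checks out: a swap across the bond $(x_i-1,x_i)$ is the only one that affects $Q_{x_i}$, the identity $\q(q^{-1}-1)=\p(1-q)=\p-\q$ collapses the occupation-variable case analysis to $\p(1-q)(\eta_{x_i}-\eta_{x_i-1})H$, and on the dual side $Q_{x_i\mp 1}=q^{\pm\eta_{x_i-1}},q^{\mp\eta_{x_i}}$ times $Q_{x_i}$ gives your second display; the leftover terms indexed by $\lbrace i: x_{i+1}=x_i+1\rbrace$ and $\lbrace j: x_j=x_{j-1}+1\rbrace$ cancel under $j=i+1$, and you correctly identify this telescoping as what reconciles the exclusion-blocked jumps on the $\vec x$-side with the still-active bonds on the $\eta$-side.

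The one place where you are too quick is the upgrade from the generator identity to the semigroup duality \eqref{eq:defduality}. For the dual process the claim is indeed routine ($n$ particles, bounded jump rates, countable state space, $H$ bounded), but for the infinite-volume ASEP, boundedness of $H$ is not the issue: $H(\cdot,\vec x)$ is neither a local function of $\eta$ nor even continuous in the product topology (adding infinitely many particles arbitrarily far to the right of $x_1$ changes $H$ from $1$ to $0$; note also that $H$ takes values in $[0,1]$, not $(0,1]$, since $N_x$ may be infinite), so ``standard conservative jump structure'' does not place $H$ in the domain of $\Letafullspace$ and the hypothesis of \cite[Prop.~1.2]{jansen2014} that the paper itself flags --- membership in the domain for every $\vec x$ --- requires a genuine approximation or graphical-construction argument. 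That justification is precisely the technical content supplied in \cite{borodin2012duality}; in the present paper only the pointwise generator identity $\Letafullspace H=\Lpart^{(n)}H$ is used downstream, and that part of your argument is complete.
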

Since $\Lpart^{(n)}$ is the generator of full-space ASEP, Proposition \ref{prop:dualityfull-space} can also be rephrased as a self-duality, though for our purposes, it is convenient to describe the two processes differently. A Markov (self-)duality for ASEP was first obtained by \cite{schutz1997duality} with respect to a different function, the function $\widetilde H(\eta, \vec x):= \prod_{i=1}^n\widetilde Q_{x_i}(\eta)$ where $\widetilde Q_{x}(\eta)=\eta_xq^{N_{x+1}(\eta)}$. It was inspired by the analysis of the XXZ spin chain and proved  by writing ASEP generator as a tensor product of Pauli matrices and expressing symmetries in terms of the $U_q(\mathfrak{sl}_2)$ algebra. Another proof was provided in \cite{borodin2012duality} by writing out explicitly $\Letafullspace \widetilde H$ and $\Lpart^{(n)} \widetilde H$ and comparing the two expressions. Proposition \ref{prop:dualityfull-space} states a Markov duality with respect to the function $H$, and it is proved in \cite{borodin2012duality}, also using an explicit computation of how generators act on $H$. 
We refer to \cite[Section 2.6.1]{kuan2019stochastic} for a discussion of several Markov dualities satisfied by  ASEP. 

\subsection{Duality for half-line ASEP under Liggett's condition}
\label{sec:dualityhalfline}
In this Section we will use the full-space ASEP duality from Proposition \ref{prop:dualityfull-space} to establish a duality for the half-line open ASEP under Liggett's condition \ref{eq:Liggettcondition}. 
Although we will see that it can be stated as a Markov duality, it is convenient to first introduce the following weaker notion. 
\begin{definition}
Let $\mathbb X$ and $\mathbb Y$ be two state spaces. We say that the operator $\mathscr L^X$, acting on functions $\mathbb X\to \mathbb R$ and the operator $\mathscr D^{Y}$, acting on functions $\mathbb Y\to \mathbb R$, are dual with respect to the function $H:\mathbb X\times\mathbb Y\to \mathbb R$ if for all $x\in \mathbb X$ and $y\in \mathbb Y$, 
$$ \mathscr L^X H(x, y) = \mathscr D^Y H(x,y).$$
\label{def:dualityoperators}
\end{definition}
In the following, the operator $\mathscr L^X$ will be the generator of a Markov process,  and  the operator $\mathscr D^Y$ will be of the form $\mathscr D_{Y}=\mathscr L^Y+ V$ where $\mathscr L^Y$ is the generator of a Markov process on the state space $\mathbb Y$ and $V:\mathbb Y\to \mathbb R$ is a bounded function. This notion of duality was considered in e.g. \cite[Chap 4, Sec. 4]{ethier2009markov}, and can be rephrased, via the Feynman-Kac formula,  as an equality between expectations (See Corollary \ref{cor:halfspace} and Corollary \ref{cor:segment} below).

Define an operator  $\Lpart^{(n, \rho)}$ acting on functions $f:\Weyl{n}_{\geqslant 1}\to \R$ by  
\begin{multline}
	\Lpart^{(n, \rho)} f(\vec x)  = \sum_{\underset{x_i-x_{i-1}>1}{2\leqslant i\leqslant n} } \p\left( f(\vec x_i^-) -f(\vec x)\right) + \sum_{   \underset{x_{i+1}-x_{i}>1}{1\leqslant i\leqslant n}} \q\left( f(\vec x_i^+) -f(\vec x)\right)\\ 
	 +\mathds{1}_{x_1>1}\p\left( f(\vec x_1^-) -f(\vec x)\right) -\mathds{1}_{x_1=1} (\p-\q)\rho f(\vec x),
	\label{eq:defLpartboundary}
\end{multline}
with the convention that $x_{n+1}=+\infty$. This operator is a sub-Markov generator, which describes the time evolution of the transition probability of a system of $n$ particles on $\Z_{\geqslant 1}$ performing continuous time simple random walks with jump rates $\p$ and $\q$, under the exclusion constraint, with killing at the boundary. More precisely, when the first particle is on site $1$, it cannot jump left and the process is killed with rate $(\p-\q)\rho$ (we refer to \cite{fitzsimmons1999kac} for more details on Markov processes killed at a state-dependent rate). We will call $\Lpart^{(n, \rho)}$ the (sub-Markov) generator of the \emph{killed $n$-particle half-line ASEP}. We now state a first duality result in the form that is the most convenient for our later purposes. We then show in Section \ref{sec:markovdualityhalfspace} that it implies a Markov duality (Corollary \ref{cor:markovdualityhalfspace}). We also provide another probabilistic interpretation in Corollary \ref{cor:halfspace} and further comment on the killing mechanism in Remark \ref{rem:killing}. 
\begin{theorem} Fix $n\geqslant 1$ and assume that \eqref{eq:Liggettcondition} holds. 
	The generator \eqref{eq:defLeta} of the half-line open  ASEP with right and left jump rates $\p$ and $\q$ and the operator \eqref{eq:defLpartboundary}, generator of the killed $n$-particle half-line ASEP with right and left jump rates $\q$ and $\p$, are dual with respect to 
	\begin{equation*}
		H(\eta,\vec x) := \prod_{i=1}^n Q_{x_i}(\eta).
	\end{equation*}
	Equivalently, for any $\eta\in \lbrace 0,1\rbrace^{\Z\geqslant 1}$ and $\vec x\in \Weyl{n}_{\geqslant 1}$, 
	\begin{equation*}
		\Leta H(\eta, \vec x) = \Lpart^{(n, \rho)} H(\eta, \vec x). 
		\label{eq:dualityhalfspace}
	\end{equation*}
	\label{theo:dualityhalf-space}
\end{theorem}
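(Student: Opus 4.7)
The strategy is to deduce the half-line duality from the full-space one (Proposition \ref{prop:dualityfull-space}) by means of a stochastic reservoir extension of the configuration. Given $\eta\in\lbrace 0,1\rbrace^{\Z_{>0}}$, I would extend it to a random $\tilde\eta\in\lbrace 0,1\rbrace^{\Z}$ by keeping $\tilde\eta_x=\eta_x$ for $x\geqslant 1$ and drawing $(\tilde\eta_x)_{x\leqslant 0}$ as i.i.d.\ Bernoulli$(\rho)$ variables, with $\rho=\ratealpha/\p$. Writing $\mathbb{E}_\rho$ for the expectation over this auxiliary randomness and noting that each $Q_{x_i}(\tilde\eta)$ with $x_i\geqslant 1$ depends only on $(\tilde\eta_j)_{j\geqslant x_i}=(\eta_j)_{j\geqslant x_i}$, one has $H(\tilde\eta,\vec x)=H(\eta,\vec x)$ deterministically. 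The plan is then to apply Proposition \ref{prop:dualityfull-space} to write $\Letafullspace H(\tilde\eta,\vec x)=\Lpart^{(n)}H(\tilde\eta,\vec x)$ pointwise, and to take $\mathbb{E}_\rho$ on both sides, aiming to match with $\Leta H(\eta,\vec x)$ on the left and $\Lpart^{(n,\rho)} H(\eta,\vec x)$ on the right.

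On the left-hand side, the bulk exchanges at bonds $(x,x+1)$ with $x\geqslant 1$ reproduce the bulk of $\Leta H$ verbatim; exchanges at bonds with $x\leqslant -1$ contribute zero since $H$ is insensitive to sites $\leqslant 0$; and only the bond $(0,1)$ requires genuine work. Using the factorization $H(\tilde\eta^{0,1},\vec x)-H(\tilde\eta,\vec x)=(q^{\tilde\eta_0}-q^{\eta_1})A$ with $A$ independent of $\tilde\eta_0$ when $x_1=1$ (and $=0$ when $x_1\geqslant 2$), and averaging $\tilde\eta_0$ with weights $\rho$ and $1-\rho$, the expected contribution works out to $\p\rho(1-\eta_1)(H(\eta^+,\vec x)-H(\eta,\vec x))+\q(1-\rho)\eta_1(H(\eta^-,\vec x)-H(\eta,\vec x))$, which is exactly the boundary part of $\Leta H(\eta,\vec x)$ once Liggett's condition \eqref{eq:Liggettcondition} is invoked to identify $\ratealpha=\p\rho$ and $\rategamma=\q(1-\rho)$.

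On the right-hand side, every allowed jump of $\Lpart^{(n)}$ whose target stays in $\Weyl{n}_{\geqslant 1}$ reproduces the corresponding jump of $\Lpart^{(n,\rho)}$ after averaging, because again $H(\tilde\eta,\vec x')=H(\eta,\vec x')$ for $\vec x'\in\Weyl{n}_{\geqslant 1}$. The only exception is the left jump of particle $1$ from $x_1=1$ to position $0$: using $Q_0(\tilde\eta)=q^{\tilde\eta_0}Q_1(\tilde\eta)$, its contribution $\p\bigl(H(\tilde\eta,(0,x_2,\ldots,x_n))-H(\tilde\eta,\vec x)\bigr)=\p(q^{\tilde\eta_0}-1)H(\eta,\vec x)$ averages under $\mathbb{E}_\rho$ to $\p(\rho q+(1-\rho)-1)H(\eta,\vec x)=-(\p-\q)\rho\,H(\eta,\vec x)$, coinciding with the killing term in \eqref{eq:defLpartboundary}. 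Assembling the two sides then yields the generator identity $\Leta H(\eta,\vec x)=\Lpart^{(n,\rho)}H(\eta,\vec x)$.

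To upgrade this infinitesimal identity to the semigroup duality \eqref{eq:defduality}, I would invoke \cite[Prop.\ 1.2]{jansen2014}: $H$ is uniformly bounded by $1$ (since $q\in(0,1)$ and $N_x\geqslant 0$), and the killed $n$-particle half-line ASEP is promoted to a genuine Markov process on $\Weyl{n}_{\geqslant 1}\cup\{\dagger\}$ by setting $H(\eta,\dagger)=0$. The essentially unique nontrivial point is the algebraic matching at the $(0,1)$ bond and at the killing site $x_1=1$; Liggett's condition is precisely what makes both matchings succeed simultaneously with the same parameter $\rho$, which is why this approach would fail for generic boundary rates.
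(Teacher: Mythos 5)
Your proposal is correct and follows essentially the same route as the paper: augment the configuration with fictitious Bernoulli$(\rho)$ occupation at site $0$ (the paper uses only site $0$, you use all of $\Z_{\leqslant 0}$, which is immaterial since only the bond $(0,1)$ contributes), apply the full-space duality of Proposition \ref{prop:dualityfull-space}, and average, with the same key identification $\p\bigl((\rho q+1-\rho)-1\bigr)=\rho(\q-\p)$ producing the killing term of $\Lpart^{(n,\rho)}$. The only (harmless) addition is your explicit remark on upgrading the generator identity to the semigroup duality via the cemetery state, which the paper handles through its general discussion in Section \ref{sec:duality}.
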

The special case of Theorem \ref{theo:dualityhalf-space} for $n=1$ was discovered in \cite{corwin2016open} and can be interpreted as a microscopic Hopf-Cole transform. It was used in \cite{corwin2016open}  to prove the convergence of half-line ASEP (as well as open ASEP on the segment) to the KPZ equation with appropriate boundary conditions. 
\begin{proof} 
Liggett's condition \eqref{eq:Liggettcondition} allows to rewrite the generator $\Leta$ by making use of some fictitious occupation variable $\eta_0$ at site $0$, where the occupation variable $\eta_0$ should be thought of as a Bernoulli random variable with parameter $\rho$  (recall that $\rho=\ratealpha/\p$), independent from all other random variables.

	More precisely, let us first define, for any local function $f:\lbrace 0,1\rbrace^{\Z_{\geq 0}}$, 
	\begin{equation}
		\Lzero f(\eta)  =  \sum_{x=0}^{\infty} \left(  \p\eta_x(1-\eta_{x+1}) +  \q\eta_{x+1}(1-\eta_{x}) \right) \left( f(\eta^{x,x+1})-f(\eta)\right), 
		\label{eq:defLeta0}
	\end{equation}
	that is the generator of half-line ASEP on $\mathbb Z_{\geq 0}$ (instead of $\mathbb Z_{>0}$ as above), with no injection nor ejection of particles at site $0$. Let $E$ denote the expectation with respect to the probability distribution on $\eta_0$ such that $\eta_0=1$ with probability $\rho$, and $0$ with probability $1-\rho$. The variable $\eta_0$ arises in \eqref{eq:defLeta0} only in the term $x=0$, and under \eqref{eq:Liggettcondition}, we have $E[\p\eta_0]=\ratealpha$ and $E[\q(1-\eta_0)]=\rategamma$. Hence for any local function $f:\lbrace 0,1\rbrace^{\mathbb Z_{>0}}\to \R$,  
	\begin{equation}
		\Leta f(\eta)=E\left[\Lzero f (\eta)\right]. 
		\label{eq:Liggetttrick}
	\end{equation}
	
	Now, we may use the duality for full-space ASEP. First, we remark that  the function ${H}(\eta,\vec x)$ was  defined for $\eta\in \lbrace 0,1\rbrace^{\mathbb Z}$ in \eqref{eq:defH}, but when $x_1\geq 1$, it depends only on the projection of $\eta$ to the positive occupation variables, that is, it depends only on $(\eta_1,\eta_2,\dots)\in \lbrace 0,1 \rbrace^{\mathbb Z_{>0}}$. Moreover, for $x_1\geqslant 1$,  when computing $\Letafullspace {H}(\eta,\vec x)$, the summation over all $x\in \mathbb Z$ in \eqref{eq:defgeneratorfullspace} can be restricted to a summation over $x\geq 0$ (because for $x< 0$ and $x_1\geq 1$, ${H}(\eta^{x,x+1},\vec x)={H}(\eta,\vec x)$). 
	This implies that for any $\vec x\in \Weyl{n}_{\geq 1}$ and $\eta\in \lbrace 0,1\rbrace^{\mathbb Z_{>0}}$, we have  $\Letafullspace {H}(\eta,\vec x)=\Lzero {H}(\eta,\vec x)$. Thus, using \eqref{eq:Liggetttrick}, for all $\vec x\in \Weyl{n}_{\geq 1}$, 
	\begin{equation}
		\Leta H(\eta, \vec x)=E\left[\Lzero H(\eta; \vec x)\right]= E\left[\Letafullspace H(\eta; \vec x)\right].
		\label{eq:Liggetttrickapplied}
	\end{equation}
	Using now Proposition \ref{prop:dualityfull-space}, 
	\begin{equation}
		\Leta H(\eta, \vec x)= E\left[\Lpart^{(n)} H(\eta; \vec x)\right],
		\label{eq:Liggetttrickduality}
	\end{equation}
	where the expectation $E$ simply means that we average over $\eta_0$, which arises in $Q_0(\eta)=q^{\eta_0}Q_1(\eta)$. The right-hand-side of \eqref{eq:Liggetttrickduality} can be seen as a polynomial in $Q_0(\eta),Q_1(\eta),Q_2(\eta),\dots$ which is linear in $Q_0(\eta)$. Since only $Q_0(\eta)$ depends on $\eta_0$, the expectation $E$ can be computed by replacing each occurrence of $Q_0(\eta)$ by $(\rho q+1-\rho)Q_1(\eta)$. There is actually at most one term involving $Q_0(\eta)$ in $\Lpart^{(n)} H(\eta; \vec x)$, arising in the term $i=1$ in \eqref{eq:defLpart} when $x_i=1$,  and since 
\begin{equation*} \p\Big((\rho q+1-\rho)-1\Big) = \rho(\q-\p),
\end{equation*}
	we obtain that for any $\eta\in \lbrace 0,1\rbrace^{\Z\geqslant 1}$ and $\vec x\in \Weyl{n}_{\geqslant 1}$, 
	\begin{equation*}\Leta H(\eta, \vec x)= \Lpart^{(n, \rho)} H(\eta; \vec x).
	\end{equation*}
	\end{proof} 
	\begin{remark}
		Markov dualities for half-line ASEP or open ASEP on a segment have been investigated in earlier references 
		 under various assumptions on boundary rates, involving different observables than the function $H(\eta, \vec x)$. In particular, in the half-line case,  a duality between half-line open ASEP with parameters $\alpha=1, \gamma=0$ and the same process with parameters $\rategamma=1, \ratealpha=0$ was found in \cite{kuan2019stochastic}, with respect to Schütz's functional. In \cite{kuan2022two} another  self-duality result was proved for half-line open ASEP with rates $ \p\in (0,1), \q=1, \ratealpha\in (0,1)$ and $\rategamma=1$, but the duality functional is more complicated (the expression is given in \cite[Theorem 1.3]{kuan2022two}).  
		\label{rem:dualityhalfline}
	\end{remark}
\begin{remark}
	The fact that the full-line ASEP also satisfies a duality with respect to the function  $\widetilde H(\eta, \vec x)$  might be used to show some analogue of Theorem \ref{theo:dualityhalf-space} involving the functional  $\widetilde H(\eta(t), \vec x)$. However, the boundary condition in that case would take a significantly more complicated form, so we chose not to pursue further this direction. More generally, Markov dualities have been proved in the last few years for a number of other Markov processes  \cite{corwin2015stochastic, carinci2016generalized, carinci2016asymmetric, kuan2018multi, carinci2019orthogonal, carinci2019consistent, kuan2021short,  chen2020integrable, kuan2020interacting, carinci2021q, franceschini2022orthogonal}. It would be interesting if our method using fictitious sites applies to some of these other systems as well. 
\end{remark}
\begin{remark}
	If we do not assume condition \eqref{eq:Liggettcondition}, it does not seem that one can prove a duality with respect to the function $H(\eta, \vec x)$. More precisely, it can be shown that for all $\vec x\in \Weyl{n}_{\geqslant 2}$ and $\eta\in \lbrace 0,1\rbrace^{\mathbb Z_{>0}}$,
	\begin{equation*}
\Leta {H}(\eta,\vec x)  = \Lpart^{(n)}H(\eta,\vec x),
	\end{equation*}
	while if $x_1=1$, 
	\begin{equation*} \Leta {H}(\eta,\vec x)  =  (\ratealpha q+\rategamma)H(\eta, 2,x_2, \dots, x_n) -(\ratealpha+\rategamma)H(\eta, 1, x_2, \dots, x_n)  + \Lpart^{(n-1)}H(\eta,1, x_2, \dots, x_n)
	\end{equation*}
	where $\Lpart^{(n-1)}$ is understood as acting on functions of $(x_2< \dots< x_n)$. Hence, $\Leta {H}(\eta,\vec x)$  involves the values of $H(\eta, \vec y)$ for some $\vec y\not\in \Weyl{n}_{\geq 1}$ (when $x_1=1$ and $x_2=2$). 
	\label{rem:withoutLiggett}
\end{remark}

\subsection{Markov duality}
\label{sec:markovdualityhalfspace}
Theorem \ref{theo:dualityhalf-space} is a duality of operators between the generator $\Leta$ and the sub-Markov generator $\Lpart^{(n,\rho)}$. In this section, we show that it implies a Markov duality in the sense of \eqref{eq:defduality}.  

A sub-Markov process can often be seen as a Markov process on a state space augmented by a cemetery state which we will denote by the symbol $\partial$ (following the notations in \cite{fitzsimmons1999kac}). Consider the state space $\mathsf W^{(n)}:=\Weyl{n}_{\geq 1}\cup \lbrace \partial \rbrace$ and the continuous-time Markov process $\mathsf x(t)\in \mathsf W^{(n)}$ such that when $\mathsf x(t)=\vec x\in \Weyl{n}_{\geq 1}$, the dynamics are the same as in the $n$-particle ASEP on $\mathbb Z_{>0}$ with closed boundary conditions and jump rates $\p$ to the left and $\q$ to the right, except that when $x_1=1$, the process jumps to the cemetery state $\partial$ at rate $\rho(\p-\q)$. Formally, this is the Markov process on $\mathsf W^{(n)}$ with generator $\Lpart^{n,\partial}$ defined on functions $f:\mathsf W^{(n)}\to \R$ by 
\begin{multline}
	\Lpart^{(n, \partial)} f(\vec x)  = \sum_{\underset{x_i-x_{i-1}>1}{2\leqslant i\leqslant n} } \p\left( f(\vec x_i^-) -f(\vec x)\right) + \sum_{   \underset{x_{i+1}-x_{i}>1}{1\leqslant i\leqslant n}} \q\left( f(\vec x_i^+) -f(\vec x)\right)\\ 
	+\mathds{1}_{x_1>1}\p\left( f(\vec x_1^-) -f(\vec x)\right) + \mathds{1}_{x_1=1} (\p-\q)\rho\left( f(\partial) - f(\vec x)\right),
	\label{eq:defLpartdagger}
\end{multline}
when $\mathsf x=\vec x\in \Weyl{n}_{\geq 1}$ and $\Lpart^{(n, \partial)} f(\mathsf x)=0$ if $\mathsf x=\partial$. Define the function 
$$ \mathsf H(\eta; \mathsf x):= \begin{cases}
	H(\eta; \vec x) &\mbox{ if }\mathsf x=\vec x\in \Weyl{n}_{\geq 1}, \\
	0 &\mbox{ if }\mathsf x=\partial. 
\end{cases} $$
\begin{corollary}
	Assume that \eqref{eq:Liggettcondition} holds. For any $\mathsf x\in \mathsf W^{(n)}$ and $\eta\in \lbrace 0,1\rbrace^{\Z_{>0}}$, we have 
	\begin{equation}
		\mathbb E^{\eta}\left[\mathsf H(\eta(t), \mathsf x) \right] = \mathbb E_{\mathsf x}\left[\mathsf H(\eta, \mathsf x(t)) \right],
		\label{eq:markovdualityhalfspace}
	\end{equation}
where in the left-hand-side $\mathbb E^{\eta}$ denotes the expectation with respect to the half-line ASEP (i.e. the Markov process $\eta(t)$ on $\lbrace 0,1\rbrace^{\Z_{>0}}$ with generator \eqref{eq:defLeta})  starting from initial condition $\eta$, and in the right-hand-side, $\mathbb E_{\mathsf x}$ denotes the expectation with respect to the process $\mathsf x(t)$ on $\mathsf W^{(n)}$ with generator  \eqref{eq:defLpartdagger}, starting from initial condition $\mathsf x$.
	\label{cor:markovdualityhalfspace}
\end{corollary}
\begin{proof}
	Let us show that for any $\mathsf x\in \mathsf W^{(n)}$ and $\eta\in \lbrace 0,1\rbrace^{\Z_{>0}}$, 
	\begin{equation}
		\Leta \mathsf H(\eta; \mathsf x) = \Lpart^{(n, \partial)} \mathsf H(\eta; \mathsf x).
		\label{eq:markovdualitygenerators}
	\end{equation}
When $\mathsf x=\partial$, \eqref{eq:markovdualitygenerators} holds since both sides equal $0$. Assume now that $\mathsf x=\vec x\in \Weyl{n}_{\geq 1}$. In that case, $\Leta \mathsf H(\eta; \mathsf x)=\Leta  H(\eta; \vec x)$ by definition of the function $\mathsf H$, and since $\mathsf H(\eta; \partial)=0$, the definition of $\Lpart^{n, \partial}$ in \eqref{eq:defLpartdagger} shows that $\Lpart^{(n, \partial)} \mathsf H(\eta; \vec x) = \Lpart^{(n,\rho)} H(\eta; \vec x)$. Thus, Theorem \ref{theo:dualityhalf-space} implies \eqref{eq:markovdualitygenerators}. Finally, since $\mathsf H$ is bounded, the duality of generators implies \eqref{eq:markovdualityhalfspace} (see \cite[Prop. 1.2]{jansen2014}). 
\end{proof}

\subsection{A system of ODEs for half-line ASEP} 
\label{sec:ODE}
In this section, we show how the  duality from Theorem \ref{theo:dualityhalf-space} can be used to find a closed system of ODEs that characterize the function of $(t,\vec x)\mapsto \mathbb E[\prod_{i=1}^n Q_{x_i}(\eta(t))]$. 
\begin{proposition} Assume that \eqref{eq:Liggettcondition} holds and fix some initial state $\eta\in \lbrace 0,1\rbrace^{\Z_{>0}}$ with finitely many particles. 
There exists a unique function $ u : \R_{+} \times \Weyl{n}_{\geq 1}  \to \R$ which satisfies 
\begin{enumerate}
	\item For all $\vec x \in \Weyl{n}_{\geq 1}$ and $t\in \R_+$, 
	\begin{equation} \frac{d}{dt} u(t;\vec x) = \Lpart^{(n, \rho)}  u(t;\vec x), 
	\label{eq:cond1}
	\end{equation}
where we recall that $\Lpart^{(n, \rho)}$ is defined in \eqref{eq:defLpartboundary}; 
	\item For any $T>0$, there exist constants $c,C>0$ such that for all $t\in [0,T]$ and $\vec x \in \Weyl{n}_{\geq 1}$, 
	\begin{equation}
\vert u(t; \vec x) \vert \leqslant C \prod_{i=1}^n  e^{c\vert x_i\vert}.
\label{eq:boundedgrowth}
	\end{equation} 
	\item For any $\vec x\in \Weyl{n}_{\geqslant 1}$, $u(0; \vec x)=H(\eta,\vec x)$.
\end{enumerate}
The solution to (1),(2) and (3) above is such that for all  $\vec x \in \Weyl{n}_{\geqslant 1}$ and $t\in \R_+$, $u(t,\vec x) = \EE^{\eta}[H(\eta(t),\vec x))]$, where $\mathbb E^{\eta}$ denotes the expectation with respect to half-line ASEP with initial condition $\eta$.
\label{prop:trueevolution}
\end{proposition}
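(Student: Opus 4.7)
The plan is to prove existence by exhibiting the candidate $u(t,\vec x):=\mathbb E^{\eta}[H(\eta(t),\vec x)]$ and using Theorem~\ref{theo:dualityhalf-space} to transfer the $\Leta$-action on $\eta$ into the $\Lpart^{(n,\rho)}$-action on $\vec x$, and then to prove uniqueness via a Duhamel/Gronwall argument that exploits the growth bound~\eqref{eq:boundedgrowth}.

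For \emph{existence}, I would first observe that since $\eta$ has finitely many particles and particles can be created only at the boundary at bounded rate $\ratealpha+\rategamma$, the configuration $\eta(t)$ almost surely has finitely many particles for every $t\ge 0$; hence $N_{x_i}(\eta(t))<\infty$ and $H(\eta(t),\vec x)\in(0,1]$. This makes $u$ well defined and uniformly bounded by $1$, which gives \eqref{eq:boundedgrowth} trivially (with $C=1$, $c=0$), and condition (3) is immediate. For (1), the key point is that $\Leta H(\cdot,\vec x)$ is a bounded function of $\eta$: each bulk term $H(\eta^{y,y+1},\vec x)-H(\eta,\vec x)$ vanishes unless $y$ or $y+1$ lies in $\{x_1,\dots,x_n\}$, and the boundary term is bounded by construction. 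Dynkin's formula then gives
\begin{equation*}
u(t,\vec x)-u(0,\vec x)=\int_0^t \mathbb E^{\eta}\!\left[\Leta H(\eta(s),\vec x)\right]ds,
\end{equation*}
and Theorem~\ref{theo:dualityhalf-space} replaces $\Leta H(\eta(s),\vec x)$ by $\Lpart^{(n,\rho)} H(\eta(s),\vec x)$. Since $\Lpart^{(n,\rho)}$ acts only on the $\vec x$ variable and involves finitely many points near $\vec x$, it can be pulled outside the expectation, yielding \eqref{eq:cond1} after differentiating in $t$.

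For \emph{uniqueness}, let $v=u_1-u_2$ with $u_1,u_2$ both solutions; then $\partial_t v=\Lpart^{(n,\rho)}v$, $v(0,\cdot)\equiv 0$, and $v$ satisfies \eqref{eq:boundedgrowth} with some constants $c,C$. I would decompose $\Lpart^{(n,\rho)}=A-d(\vec x)$, where $d(\vec x)\ge 0$ is the bounded diagonal multiplication coming from the outgoing rates plus the boundary killing, and $A$ is the off-diagonal operator accounting for transitions to the neighbours $\vec x_i^\pm$. The integrating-factor representation reads
\begin{equation*}
v(t,\vec x)=\int_0^t e^{-(t-s)d(\vec x)}\,(Av)(s,\vec x)\,ds.
\end{equation*}
Setting $M(T):=\sup_{t\in[0,T],\,\vec x\in \Weyl{n}_{\ge 1}}|v(t,\vec x)|\prod_{i=1}^n e^{-c|x_i|}$, the bound \eqref{eq:boundedgrowth} ensures $M(T)<\infty$. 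Because $A$ only shifts a single coordinate by $\pm 1$, one gets $|(Av)(s,\vec x)|\le K e^{c}\,M(s)\prod_i e^{c|x_i|}$ for a constant $K=K(n,\p,\q)$, so that $M(T)\le Ke^{c}\int_0^T M(s)\,ds$; Gronwall's inequality together with $M(0)=0$ then forces $v\equiv 0$.

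The step I expect to be the main obstacle is uniqueness: the evolution \eqref{eq:cond1} on the unbounded lattice $\Weyl{n}_{\ge 1}$ does not have unique solutions without a growth restriction, and one must check that $\Lpart^{(n,\rho)}$ preserves the weighted space $\{f:\prod_i e^{-c|x_i|}|f(\vec x)|\in\ell^\infty\}$ with an operator bound depending only on $n,\p,\q,c$. This is precisely the reason the product-exponential form of the growth condition \eqref{eq:boundedgrowth} is chosen; had the growth been stated in a non-product form, the Gronwall iteration would not close.
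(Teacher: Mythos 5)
Your existence argument coincides with the paper's: both take $u(t,\vec x)=\mathbb E^{\eta}[H(\eta(t),\vec x)]$, use the duality of Theorem \ref{theo:dualityhalf-space} to replace $\Leta H$ by $\Lpart^{(n,\rho)}H$, and pull the finite-range operator $\Lpart^{(n,\rho)}$ outside the expectation; your Dynkin-formula phrasing and the remark that $H\leq 1$ gives (2) trivially are only cosmetic variations on the generator/semigroup commutation used in the paper. Where you genuinely diverge is uniqueness. The paper argues probabilistically: it runs the dual killed $n$-particle process $\vec x(t)$ with generator $\Lpart^{(n,\rho)}$, shows via Kolmogorov's backward equation that $t\mapsto\mathbb E^{\vec x}\left[ g(t,\vec x(T-t))\right]$ is constant, and delegates the analytic justification (well-definedness, differentiability, and the role of the growth bound \eqref{eq:boundedgrowth}) to \cite[Appendix C]{borodin2012duality} and \cite[Proposition 3.6]{barraquand2016q}. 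You instead observe that $\Lpart^{(n,\rho)}$ is a bounded operator on the weighted space $\lbrace f:\sup_{\vec x}|f(\vec x)|\prod_i e^{-c|x_i|}<\infty\rbrace$ (each of the at most $2n$ off-diagonal terms shifts a single coordinate by $1$, costing a factor $e^{c}$, and the diagonal plus killing part is bounded), so a Duhamel-plus-Gronwall iteration on $M(T)$ forces $v\equiv 0$; this is correct as written, since the Duhamel formula is applied pointwise in $\vec x$ to a scalar ODE and $M$ is finite and monotone on $[0,T]$ by \eqref{eq:boundedgrowth} (the $T$-dependence of $c,C$ is harmless because one works on a fixed time window). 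Your route is self-contained and more elementary, avoiding the imported lemmas; the paper's probabilistic route is the one that survives when the dual generator fails to be bounded on a convenient weighted space (e.g.\ unbounded jump rates), which is why it is the standard argument in this literature. One small inaccuracy in your closing comment: the product form of \eqref{eq:boundedgrowth} is not essential for the Gronwall step — any weight $w$ with $w(\vec y)/w(\vec x)$ uniformly bounded over single-coordinate moves $\vec y$ of $\vec x$ closes the same iteration.
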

\begin{proof}
	We will show that the function $(t,\vec x)\mapsto \EE^{\eta}[H(\eta(t),\vec x))]$ is the unique solution to (1), (2) and (3). We have that 
	\begin{align*}
		\frac{d}{dt} \mathbb E^{\eta} \left[H(\eta(t), \vec x) \right] &= \Leta \mathbb E^{\eta} \left[H(\eta(t), \vec x) \right] \\
		&=  \mathbb E^{\eta} \left[\Leta H(\eta(t), \vec x) \right]\\
		&= \mathbb E^{\eta} \left[ \Lpart^{(n, \rho)}  H(\eta(t), \vec x)\right]\\
		&= \Lpart^{(n, \rho)} \mathbb E^{\eta} \left[ H(\eta(t), \vec x) \right].
	\end{align*}
	The first equality holds by definition of the generator, the second uses the commutation between generator and semi-group, the third uses Theorem \ref{theo:dualityhalf-space} and the last equality holds by linearity of the expectation and the fact that  $\Lpart^{(n, \rho)} H(\eta(t), \vec x)$ is simply a finite sum. 
	This implies that $\mathbb E^{\eta} \left[H(\eta(t), \vec x) \right]$ must satisfy conditions (1) of the statement of the Proposition. The condition (2) is obviously satisfied because $H(\eta, \vec x)<1$. The condition (3) is satisfied by definition (the initial condition of the process $\eta(t)$ is $\eta$). 
	
Let us now turn to the uniqueness. Assume that we have two solutions $u_1$ and $u_2$ with the same initial data and let $g=u_1-u_2$.  Let $\mathbb E_{\vec x}$ denote the expectation with respect to the sub-Markov process $\vec x(t)$ with initial condition $\vec x$ and generator $\Lpart^{(n, \rho)}$. Using  \eqref{eq:cond1} and Kolmogorov's backward equation,  for all $\vec x \in \Weyl{n}_{\geq 1}$, 
\begin{equation}
\frac{d}{dt} \mathbb E_{\vec x}\left[ g(t,\vec x(T-t)) \right] = 0.
\label{eq:derivative0}
\end{equation}
Thus, the function is constant, and comparing the values at $t=0$ and $t=T$, we obtain that $g(t,\vec x)=0$ for all $\vec x \in \Weyl{n}_{\geq 1}$, hence the uniqueness. We refer to \cite[Appendix C]{borodin2012duality} and \cite[Proposition 3.6]{barraquand2016q} for details about why $\mathbb E_{\vec x}\left[ g(t,\vec x (T-t)) \right]$ is well-defined, continuous and differentiable and how the growth hypothesis \eqref{eq:boundedgrowth} is used in proving so. 
\end{proof}
Using Proposition \ref{prop:freeevolutionu}, we can reformulate the duality of operators in Theorem \ref{theo:dualityhalf-space} as an equality of expectations (different from Corollary \ref{cor:markovdualityhalfspace}). 
\begin{corollary}
Assume that \eqref{eq:Liggettcondition} holds and fix $\vec x\in \Weyl{n}_{\geq 1}$ and some initial state $\eta\in \lbrace 0,1\rbrace^{\Z_{>0}}$ with finitely many particles. Then, 
\begin{equation}
\EE^{\eta}\left[H(\eta(t), \vec x) \right]  = \mathbb E_{\vec x}\left[H(\eta, \vec x(t)) e^{-\rho(\p-\q) \int_0^t \mathds{1}_{x_1(s)=1} ds} \right],	
\label{eq:dualityreweighted}
\end{equation}
 where $\mathbb E^{\eta}$ denotes the expectation with respect to half-line ASEP with initial condition $\eta$, and $\mathbb E_{\vec x}$ denotes the expectation with respect to the Markov process $\vec x(t)$ on $\Weyl{n}_{\geq 1}$ with generator $\Lpart^{(n, 0)}$, that is the ASEP on $\mathbb Z_{>0}$ with closed boundary conditions and jump rates $\p$ to the left and $\q$ to the right. 
 \label{cor:halfspace}
\end{corollary}
\begin{proof}
By Proposition \ref{prop:freeevolutionu}, it suffices to check that the function 
\begin{equation*}
	f(t;\vec x):= \EE_{\vec x}\left[ H(\eta, \vec x(t)) e^{-\rho(\p-\q) \int_0^t \mathds{1}_{x_1(s)=1} ds} \right]
\end{equation*} 
 satisfies the three conditions in Proposition \ref{prop:freeevolutionu}. Observe that $\Lpart^{(n, \rho)} = \Lpart^{(n, 0)} + V^{(\rho)} $ where the function $V^{(\rho)}:\Weyl{n}_{\geq 1}\to \mathbb R$ is $V^{(\rho)}(\vec x) = -\rho (\p-\q) \mathds{1}_{x_1=0}$. Since $V^{(\rho)}$ and $H$ are bounded, the condition (2) is clearly satisfied. Condition (3) is satisfied as well, since we assumed that the process $\vec x(t)$ starts from $\vec x(0)=\vec x$. To check condition (1), we first write 
\begin{equation*}
f(t;\vec x) = \EE_{\vec x(-t)=\vec x}\left[H(\eta, \vec x(0)) e^{ \int_{-t}^0 V^{(\rho)}(\vec x(s)) ds}  \right].
\end{equation*}
Then, 
\begin{multline*}
	\frac{d}{dt}  f(t;\vec x) = \\  \Lpart^{(n, 0)} \EE_{\vec x(-t)=\vec x}\left[   H (\eta, \vec x(0)) e^{ \int_{-t}^0 V^{(\rho)}(\vec x(s)) ds}  \right]  + \EE_{\vec x(-t)=\vec x}\left[V^{(\rho)}(\vec x(-t))  H(\eta, \vec x(0)) e^{ \int_{-t}^0 V^{(\rho)}(\vec x(s))ds} \right]=\\
	  \Lpart^{(n, \rho)} f(t;\vec x).
\end{multline*}
In the first equality we have used Kolmogorov's backward equation. In the second equality, we use the initial condition $\vec x(-t)=\vec x$ to recognize the action of $\Lpart^{(n, \rho)}$. Hence, $f(t;\vec x)$ satisfies the three conditions in Proposition  \ref{prop:freeevolutionu}, which concludes the proof. Alternatively, Condition (3) can be viewed as an instance of the Feynman-Kac formula for a continuous time Markov process on a countable state space with bounded potential $V^{(\rho)}$. In particular, Corollary \ref{cor:halfspace} can be deduced from Theorem \ref{theo:dualityhalf-space} using \cite[Corollary 4.13]{ethier2009markov} (see also \cite[Eq. (48)]{kesten1986influence}). 
\end{proof} 
\begin{remark}
The factor $e^{-\rho(\p-\q) \int_0^t \mathds{1}_{x_1(s)=1} ds}$ in \eqref{eq:dualityreweighted} can be interpreted as the probability that the killed $n$-particle half-line ASEP discussed above is not killed up to time $t$ (or, equivalently, the probability that $\mathsf x(t)\neq\partial$, in the setting of Section \ref{sec:markovdualityhalfspace}). Indeed, the process $\mathsf x(t)$ can be constructed as follows. Let $\vec x(t)$ be the Markov process  on $\Weyl{n}_{\geq 1}$ with generator $\Lpart^{(n, 0)}$ as in Corollary \ref{cor:halfspace} and let $(\mathcal F_t)_{t\geq 0}$ denote the associated filtration. Let $\mathcal E$ be an independent exponential random variable with rate $\rho(\p-\q)$, and define a local time $L(t)=\int_0^t\mathds{1}_{x_1(s)=1}ds$. Then, for any $t$ such that $L(t)<\mathcal E$, $\mathsf x(t)=\vec x(t)$, and when the clock rings, that is when $L(t)=\mathcal E$, the process jumps to $\mathsf x(t)=\partial$ where it stays forever. Conditionally on $\mathcal F_t$, $\mathbb P(\mathcal E > L(t)) = e^{-\rho(\p-\q) L(t)}$. Since $\mathsf H$ vanishes when $\mathsf x=\partial$
$$ \mathbb E_{\mathsf x}\left[\mathsf H(\eta, \mathsf x(t)) \right] =  \mathbb E_{\vec x}\left[  \mathbb E_{\mathsf x}\left[ \mathsf H(\eta; \mathsf x(t)) \vert \mathcal F_t \right]  \right]  = \mathbb E_{\vec x}\left[ H(\eta; \vec x(t))e^{-\rho(\p-\q) L(t)} \right],$$ 
so that Corollary \ref{cor:halfspace} and Corollary \ref{cor:markovdualityhalfspace} are equivalent. 
\label{rem:killing}
\end{remark}


It is  not a priori clear how to find an explicit formula solving the system of equations from Proposition \ref{prop:trueevolution}. We now use a reformulation of ASEP's Kolmogorov equation as a system of ODEs on a larger space subject to boundary conditions. This was already used in solving full-line ASEP via  Bethe ansatz in  \cite{schutz1997exact, tracy2008integral, borodin2012duality} and half-line ASEP with closed boundary conditions \cite{tracy2013bose}.
\begin{proposition} Assume that \eqref{eq:Liggettcondition} holds and fix some initial state $\eta\in \lbrace 0,1\rbrace^{\Z_{>0}}$ with finitely many particles. . 
	If $ u : \R_{+} \times \Z_{\geqslant 0}^n \to \R$ solves 
	\begin{enumerate}
		\item For all $\vec x \in \Z_{\geqslant 1}^n$ and $t\in \R_+$, 
		\begin{equation*} \frac{d}{dt} u(t;\vec x) = \Delta^{\p, \q}  u(t;\vec x), \end{equation*}
		where 
		\begin{equation}\Delta^{\p,\q} f(\vec x) = \sum_{i=1}^n \p f(\vec x_i^-) +\q f(\vec x_i^+) -(\p + \q)f(\vec x);
		\label{eq:defDelta}
	\end{equation}
		\item For all $\vec x \in \Z_{\geqslant 1}^n$ such that for some $i\in \lbrace 1, \dots,  n-1\rbrace $,  $ x_{i+1}=x_i+1$, we have 
		\begin{equation*} \p u(t; \vec x_{i+1}^-) + \q  u(t; \vec x_i^+) = (\p+\q)  u(t; \vec x);\end{equation*}
		\item For all $t\in \R_+$ and $(x_2,  \dots, x_n)\in \Weyl{n-1}_{\geqslant 2}$, 
		\begin{equation}
		u(t;0,x_2,\dots) = (\rho q+1-\rho)u(t;1,x_2,\dots);
		\label{eq:boundaryconditionforu}
		\end{equation}
		\item For any $T>0$, there exist constants $c,C>0$ such that for all $t\in [0,T]$ and $\vec x \in \Weyl{n}_{\geqslant 1}$, 
		\begin{equation*} \vert u(t; \vec x) \vert \leqslant C\prod_{i=1}^ne^{c\vert x_i\vert};\end{equation*}
		\item For any $\vec x\in \Weyl{n}_{\geqslant 1}$, $u(0; \vec x)=H(\eta,\vec x)$.
	\end{enumerate}
	Then for all  $\vec x \in \Weyl{n}_{\geqslant 1}$ and $t\in \R_+$, $u(t,\vec x) = \EE^{\eta}[H(\eta(t),\vec x))]$, where $\mathbb E^{\eta}$ denotes the expectation with respect to half-line ASEP with initial condition $\eta$.
	\label{prop:freeevolutionu}
\end{proposition}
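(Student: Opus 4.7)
The plan is to deduce the proposition from the uniqueness statement of Proposition \ref{prop:trueevolution}, by showing that the restriction of $u$ to $\R_+ \times \Weyl{n}_{\geqslant 1}$ satisfies conditions (1)--(3) of that proposition. Conditions (4) and (5) here are already conditions (2) and (3) there, so the only nontrivial point is to verify that for every $\vec x \in \Weyl{n}_{\geqslant 1}$,
\begin{equation*}
\Delta^{\p,\q} u(t;\vec x) \;=\; \Lpart^{(n,\rho)} u(t;\vec x),
\end{equation*}
after which condition (1) of Proposition \ref{prop:trueevolution} follows from condition (1) above. Once this identity is established, the uniqueness clause of Proposition \ref{prop:trueevolution} forces $u(t,\vec x) = \mathbb{E}^\eta[H(\eta(t),\vec x)]$ on $\Weyl{n}_{\geqslant 1}$.

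To prove the identity, I would compute $\Delta^{\p,\q} u(\vec x) - \Lpart^{(n,\rho)} u(\vec x)$ term by term. Writing $\Delta^{\p,\q}u(\vec x)=\sum_{i=1}^n\left[\p(u(\vec x_i^-)-u(\vec x))+\q(u(\vec x_i^+)-u(\vec x))\right]$ and comparing with \eqref{eq:defLpartboundary}, the discrepancies are precisely: (a) for each $i$ with $x_{i+1}=x_i+1$, the ``forbidden'' terms $\p(u(\vec x_{i+1}^-)-u(\vec x))$ and $\q(u(\vec x_i^+)-u(\vec x))$ which are present in $\Delta^{\p,\q}$ but dropped from $\Lpart^{(n,\rho)}$; (b) when $x_1=1$, the forbidden left jump $\p(u(\vec x_1^-)-u(\vec x))$ of the first particle; and (c) the extra boundary killing $-\mathds{1}_{x_1=1}(\p-\q)\rho u(\vec x)$ in $\Lpart^{(n,\rho)}$, which contributes with opposite sign to the difference.

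For each adjacent pair in (a), condition (2) gives $\p u(\vec x_{i+1}^-)+\q u(\vec x_i^+)=(\p+\q)u(\vec x)$, so the two offending contributions sum to zero. For (b) and (c), note that $\vec x\in \Weyl{n}_{\geqslant 1}$ together with $x_1=1$ forces $(x_2,\dots,x_n)\in \Weyl{n-1}_{\geqslant 2}$, so condition (3) applies and yields $u(t;\vec x_1^-)=(\rho q+1-\rho)u(t;\vec x)$. Substituting and using $\p q=\q$:
\begin{equation*}
\p\bigl((\rho q+1-\rho)-1\bigr)u(\vec x)+(\p-\q)\rho\, u(\vec x) \;=\;\rho(\q-\p)u(\vec x)+\rho(\p-\q)u(\vec x)\;=\;0.
\end{equation*}
Hence $\Delta^{\p,\q} u=\Lpart^{(n,\rho)} u$ on $\Weyl{n}_{\geqslant 1}$, completing the reduction.

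The argument is essentially routine bookkeeping; the only step of any substance is the boundary cancellation, which works precisely because Liggett's condition \eqref{eq:Liggettcondition} is built into the coefficient $(\rho q+1-\rho)$ appearing in \eqref{eq:boundaryconditionforu}. Away from the boundary, this is the familiar encoding of the exclusion rule as a linear matching condition for a free lattice random walk, going back to \cite{tracy2008integral, borodin2012duality}; the new ingredient is simply that the Robin-type boundary condition (3) manufactures exactly the killing rate $(\p-\q)\rho$ that appears in the dual generator $\Lpart^{(n,\rho)}$.
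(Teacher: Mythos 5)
Your proof is correct and follows essentially the same route as the paper: reduce to Proposition \ref{prop:trueevolution} by checking that conditions (1)--(3) turn $\Delta^{\p,\q}u$ into $\Lpart^{(n,\rho)}u$ on $\Weyl{n}_{\geqslant 1}$, and then invoke that proposition's uniqueness clause. The only difference is that you carry out the adjacent-pair and boundary cancellations explicitly, where the paper cites the standard reformulation of the $n$-particle ASEP generator and the boundary computation already made in the proof of Theorem \ref{theo:dualityhalf-space}.
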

\begin{proof}
Assume that $u$ satisfies conditions (1) to (5). The conditions (1) and (2) are a  well-known reformulation of ASEP's generator, see \cite[Section 2]{schutz1997exact} (see also \cite[Eq. (2.2)]{tracy2008integral} or  \cite[Proposition 4.10]{borodin2012duality} for instance). It ensures that for all $\vec x\in \Weyl{n}_{\geq 1}$, 
	\begin{equation*}
\frac{d}{dt} u(t;\vec x) = \Lpart^{(n)}  u(t;\vec x).
	\end{equation*}
Then, the boundary condition \eqref{eq:boundaryconditionforu} when $x_1=1$ ensures that for all $\vec x\in \Weyl{n}_{\geq 1}$, $\Lpart^{(n)}  u(t;\vec x)=\Lpart^{(n, \rho)}  u(t;\vec x)$. Thus condition (1) of Proposition \ref{prop:trueevolution} is satisfied. 
The conditions (4) and (5) above are the same as the conditions (2) and (3) from Proposition \ref{prop:trueevolution}. Hence we may apply Proposition \ref{prop:trueevolution} to conclude the proof. 
\end{proof}

\subsection{Exact integral formulas} In this section, we provide an integral formula for the function $v_n(t;\vec x) = \EE^{\emptyset}\left[ Q_{x_1}(\eta(t))\dots Q_{x_n}(\eta(t)) \right]$ for half-line ASEP with empty initial condition. To do that, our goal is to find a function that satisfies the conditions of Proposition \ref{prop:freeevolutionu} with  $v_n(0;\vec x)=1$, corresponding to the empty initial condition.  
\label{sec:formula}
\subsubsection{Comparison with earlier work} For the full space ASEP, an analogue of Proposition \ref{prop:freeevolutionu} was proved in \cite{borodin2012duality} and involves the same conditions except condition (3). Inspired by the coordinate Bethe ansatz \cite{imamura2011current} and by the theory of Macdonald processes \cite{borodin2014macdonald}, an ansatz was proposed in \cite{borodin2012duality} to find solutions of the conditions (1) (2) (4) and (5) of Proposition \ref{prop:freeevolutionu}. The idea is to look for functions $v_n(t; \vec x)$  that take the form of a function of $n$ complex variables with some specific structure,  integrated over contours in the complex plane that are nested into each other. This nested contour integral ansatz was later shown to apply to a variety of other models \cite{corwin2013q, corwin2014q, borodin2016stochasticsix, barraquand2016q, barraquand2017random}. For half-space models, a variant of the nested contour integral ansatz was suggested in \cite{borodin2016directed} to solve the half-line delta Bose gas, and formulas with a similar structure were proved for a model of random walks in random environment in a half-space \cite{barraquand2022random}. The formulas in these two works take the same form as moments of half-space Macdonald processes \cite{barraquand2018half}. In particular, the  $q$-moments of the height function of the half-space stochastic six vertex model can be computed as reasonably simple nested contour integral formulas \cite[Section 5]{barraquand2018half}. Since the half-space stochastic six vertex model considered in \cite{barraquand2018half} converges to the half-line open ASEP with $\rho=1/2$ (as shown in \cite{barraquand2018stochastic}), it is reasonable to expect that $q$-moments of the half-line open ASEP can also be written as nested contour integrals, at least when $\rho=1/2$. Unfortunately, this is not the case, the nested contour integral ansatz does not work for the half-line open ASEP, even for $\rho=1/2$. This issue was already anticipated in \cite{barraquand2018half}, which suggested that after expanding  the moment formulas for the half-space six vertex model as a sum over certain residues, the formulas might have a meaningful ASEP limit. 

Thus, partly inspired by the formulas from \cite[Section 5]{barraquand2018half}, and partly inspired by residue expansions arising in \cite[Section 7.3]{borodin2016directed}, we will define the function $v_n(t;\vec x)$ as a sum over residues instead of a simple nested contour integral, and check that it satisfies the conditions of Proposition \ref{prop:freeevolutionu}. The structure of the residue expansion is quite intricate, but we will see that in the KPZ equation limit in Section \ref{sec:KPZ}, the sum over residues can be rewritten as a relatively simple contour integral, as predicted in \cite{borodin2016directed}. 

\subsubsection{Residue subspaces}
Before stating our formula for $v_n(t;\vec x)$ we need to introduce some notations to define the residues that arise in the formula. The residues will be indexed by integer partitions and diagrams that we define below. We recall that an integer partition is a  non-increasing finite sequence of positive integers. We say that the partition $\lambda=(\lambda_1, \lambda_2, \dots, \lambda_{\ell(\lambda)})$ is a partition of $n$, denoted $\lambda \vdash n$,  if $\sum_{i=1}^{\ell(\lambda)}\lambda_i =n$. The number $\ell(\lambda)$ is called the length of the partition $\lambda$. For a given partition $\lambda\vdash n$,  we consider diagrams formed by  $\ell(\lambda)$ lines of numbers separated by arrows, of the following form:  
\begin{eqnarray*}
	i_1 &\rightarrowplus i_2\rightarrowplus \dots \rightarrowplus i_{\mu_1-1} \rightarrowplus &i_{\mu_1}\leftarrowminus i_{\mu_1+1} \leftarrowplus \dots\leftarrowplus i_{\lambda_1}\\
		j_1 &\rightarrowplus j_2\rightarrowplus \dots \rightarrowplus j_{\mu_2-1} \rightarrowplus &j_{\mu_2}\leftarrowminus j_{\mu_2+1} \leftarrowplus \dots\leftarrowplus j_{\lambda_2} \\
	&&	\vdots   
\end{eqnarray*}
The diagrams are such that 
\begin{itemize}
	\item all integers $i_1,i_2,\dots, j_1,j_2,\dots$ are distinct elements of  $\lbrace 1, \dots,n\rbrace$, so that every integer in $\lbrace 1, \dots,n\rbrace$ appears exactly once in the diagram; 
	\item all directed arrows go from a larger integer to a smaller one; 
	\item on each line, arrows change from pointing right to pointing left at most once, i.e., the sequence of numbers is decreasing, until a certain minimum, and then it is increasing. We denote by $\mu_i$ the index of the minimal number on the $i$th line, around which arrows change orientation. Note that when $\mu_i=1$, all arrows are pointing left, and when $\mu_i=\lambda_i$, all arrows are pointing right;  
	\item all arrows bear a plus sign, except for the first arrow pointing left in each row (when there is one such arrow) which bears a minus sign.
\end{itemize}
\begin{example}
For the partition $\lambda=(4,3,1,1)$, there exists many possible diagrams, such as the following three diagrams:

\begin{minipage}[center]{5cm}
\begin{align*}
	&	7 \rightarrowplus 5 \rightarrowplus 1\leftarrowminus 3\\
	&	2\leftarrowminus 4\leftarrowplus 9\\
	&	6\\
	&	7
\end{align*}
\end{minipage}
\begin{minipage}{5cm}
	\begin{align*}
		&	7 \rightarrowplus 5 \rightarrowplus 1\leftarrowminus 3\\
		&	2\leftarrowminus 4\leftarrowplus 9\\
		&	7\\
		&	6
	\end{align*}
\end{minipage}
\begin{minipage}{5cm}
	\begin{align*}
		&	7 \rightarrowplus 1 \leftarrowminus 3\leftarrowplus 5\\
		&	9\rightarrowplus 4\rightarrowplus 2\\
		&	6\\
		&	7
	\end{align*}
\end{minipage}

\noindent Note that the left and middle diagrams, which differ only by a permutation of the last two lines, are considered as different diagrams. 
\end{example}

We denote by $S(\lambda)$ the set of all such diagrams and we will generally denote by the letter $I$ one such diagram. Now, we will define a procedure to associate to any diagram $I\in S(\lambda)$ a certain residue subspace as follows. Recall that for a meromorphic function $f$ with a simple rational pole at $a$, its residue at $a$, denoted $\Res{z\to a}\lbrace f(z) \rbrace$,  is simply given by
\begin{equation*} 
	\Res{z\to a}\lbrace f(z) \rbrace = (z-a)f(z)\Big\vert_{z=a}. 
	\end{equation*} 
For a function $f(\vec z)$ of $n$ complex variables that is meromorphic in each variable, we can similarly consider its residue in the variable $z_1$ at $r(\vec z)$, where $r(\vec z)$ does not depend on $z_1$ and is a rational function of the remaining variables $z_2, \dots, z_n$ that we treat as constants. Again, assuming the pole is a simple rational pole, the residue is given by 
\begin{equation*} \Res{z_1\to r(\vec z)}\lbrace f(\vec z) \rbrace = f(\vec z) (z_1-r(z_2,\dots,z_n))\Big\vert_{z_1=r(\vec z)},
	\end{equation*}
and can be seen as a function of variables $z_2, \dots, z_n$. More generally, for indices $i_1, \dots, i_k\in \lbrace 1, \dots, n\rbrace$,  we may compute successively the residue in $z_{i_1}$ at $r_1(\vec z)$ (where $r_1(\vec z)$ is a rational function of the variables $z_j$ for $j\neq i_1$), the residue in $z_{i_2}$ at $r_2(\vec z)$ (where $r_1(\vec z)$ is a rational function of the variables $z_j$ for $j\neq i_1, i_2$), etc. until the residue in the variable $z_{i_k}$ at $r_k(\vec z)$ (where $r_k(\vec z)$ is a rational function of the variables $z_j$ for $j\neq i_1, \dots, i_k$). Assuming the poles are simple rational poles, which will be the case below, this multi-residue is given by 
\begin{equation}    
	\Res{\substack{z_{i_1}\to r_1(\vec z) \\ \dots \\ z_{i_k}\to r_k(\vec z)}}\lbrace f(\vec z) \rbrace =   f(\vec z)\prod_{j=1}^k(z_{i_j}-r_i(\vec z))\Bigg\vert_{\substack{z_{i_1}= r_1(\vec z) \\ \dots \\ z_{i_k}= r_k(\vec z)}}.
	\label{eq:residuesubspace}
	\end{equation} 
Now, for a diagram $I\in S(\lambda)$ with $\lambda\vdash n$, we will define a multi-residue of the form \eqref{eq:residuesubspace} with $k=n-\ell(\lambda)$, that is, we will compute residues in as many variables as the number of arrows in the diagram $I$, according to the following rules:   
\begin{itemize}
	\item for each any arrow of the form $a\rightarrowplus b$ or $b\leftarrowplus a$ we compute the residue in the variable $z_a$ at $qz_b$; 
	\item for each arrow of the form $a\leftarrowminus b$ we compute the residue in the variable $z_b$ at $1/z_a$.
\end{itemize} 
We will denote this  multi-residue  by $\Res{I} \left\lbrace  f(\vec z)\right\rbrace$. We may compute the residues associated to different lines in $I$ in any order. However, within a line, the residues must be computed in an appropriate order so that  \eqref{eq:residuesubspace} can be applied. Such an order can always be found, it suffices to start from the extremities of the line and follow the arrows. After computing all the residues, $\Res{I} \left\lbrace  f(\vec z)\right\rbrace$ only depends on  the variables $z_j$ for each $j$ being the minimal number on one of the lines of the diagram $I$. In other terms, $\Res{I} \left\lbrace  f(\vec z)\right\rbrace$ is a function of the variables $z_{i_{\mu_1}}, z_{j_{\mu_2}}, \dots$ There are as many variables as the length of the partition $\lambda$. 
\begin{example}For the partition $(1,1,\dots)$, there is only one possible diagram where each line has a unique number and no arrow. In that case, $\Res{I} \left\lbrace f(\vec z) \right\rbrace=f(\vec z)$. 
\end{example} 
\begin{example}For $n=7$,  $\lambda=(4,2,1)$ and 
		\begin{eqnarray*}
		&&	6 \rightarrowplus 1 \leftarrowminus 2\leftarrowplus 3\\
I=	&&	5\rightarrowplus 4\\
		&&	7
	\end{eqnarray*}
To determine  $\Res{I} \left\lbrace  f(\vec z)\right\rbrace$ one can compute successively  the residue in $z_6$ at $qz_1$ , the residue in $z_3$ at $qz_2$, the residue in $z_2$ at $1/z_1$ and finally the residue in $z_5$ at $qz_4$, and one obtains 
\begin{equation*} \Res{I} \left\lbrace  f(\vec z)\right\rbrace = (z_6-qz_1)(z_2-1/z_1)(z_3-qz_2)(z_5-qz_4)f(z_1, z_2, z_3, z_4, z_5, z_6, z_7)\Bigg\vert_{\substack{z_{6}= q z_1 \\ z_2= 1/z_1 \\ z_3 = q/z_1 \\ z_5= qz_4}}.\end{equation*}
\end{example}

\begin{remark}
	The diagrams defined in \cite{borodin2016directed} are slightly more general, allowing the arrows surrounding  $i_{\mu_1}, j_{\mu_2},\dots$ to be of the form $\rightarrowplus  i_{\mu_1} \leftarrowminus$ or $\rightarrowminus i_{\mu_1} \leftarrowplus$, but the residual subspaces associated to the latter arrows can be obtained from the former arrows after reflecting the numbers around $i_{\mu_1}$. This double counting is the reason why a factor $2^{m_2+m_3+\dots}$ is present in \cite[Eq. (45)]{borodin2016directed} but not in our equation \eqref{eq:deffunctionv} below. Note also that arrows in \cite{borodin2016directed} go from smaller to larger integers, while our arrows go from larger to smaller integers, but this is inconsequential up to reordering variables.
	\label{rem:doublecounting}
\end{remark}

\subsubsection{Formula for $\EE\left[ Q_{x_1}(\eta(t))\dots Q_{x_n}(\eta(t))  \right]$}
Let us first define a function of $n$ complex variables $\vec z=(z_1, \dots, z_n)\in \C^n$ by  
\begin{equation} \phi_{\vec x}(\vec z) = q^{\frac{n(n-1)}{2}} \prod_{i<j} \frac{z_i-z_j}{qz_i-z_j} \frac{1-q z_iz_j}{1-z_iz_j}\prod_{j=1}^n\frac{F_{x_j}(z_j)}{z_j}, 
	\label{eq:defphi}
\end{equation}
where
\begin{equation}
	F_x(z) = \frac{1-qz^2}{1-z} \exp\left( \frac{(1-q)^2 z\p t}{(1-z)(1-qz)}\right) \left( \frac{1-z}{1-q z} \right)^{x}\frac{\rho}{\rho + (1-\rho)z}.
	\label{eq:defF}
\end{equation} 
We denote by $\mathcal C$ the positively oriented circle of radius $1/\sqrt{q}$ around $0$. Using these definitions, we are now able to define a function $v_n(t;\vec x)$ as 
\begin{equation}
	v_n(t;\vec x) := \sum_{\lambda \vdash n} \frac{(-1)^{n-\ell(\lambda)}}{m_1!m_2!\dots}\sum_{I\in S(\lambda)} \oint_{\mathcal C} \frac{dz_{i_{\mu_1}}}{2\I\pi}  \oint_{\mathcal C} \frac{dz_{j_{\mu_2}}}{2\I\pi} \dots \; \Res{I} \left\lbrace \phi_{\vec x}(\vec z)\right\rbrace,
	\label{eq:deffunctionv}
\end{equation}
where $m_1, m_2, \dots $ denotes the respective  multiplicities of $1,2,\dots$ in the partition $\lambda$. 


\begin{theorem}Assume \eqref{eq:Liggettcondition}, let $\rho=\ratealpha/\p$ and assume that $\rho\in \left(\frac{1}{1+\sqrt{q}},1 \right]$.   For any $n\in \Z_{\geq 1}$,   $t\geqslant 0$ and $x\in \Weyl{n}_{\geqslant 1}$,
	\begin{equation}
		\EE\left[ Q_{x_1}(\eta(t))\dots Q_{x_n}(\eta(t))  \right]  = v_n(t;\vec x),
		\label{eq:mixedmomentsQ}
	\end{equation} 
	where $v_n$ is defined by \eqref{eq:deffunctionv}. In particular, for any $x\geq 1$, 
	\begin{equation}
		\EE[q^{N_x(t)}] = \oint_{\mathcal C} \frac{dz}{2\I\pi} \frac{F_x(z)}{z},
		\label{eq:firstmoment}
	\end{equation}
and for any $1\leq x_1<x_2$, 
\begin{multline}
	\EE[q^{N_{x_1}(t)+N_{x_2}(t)}] = \oint_{\mathcal C} \frac{dz_1}{2\I\pi}\oint_{\mathcal C} \frac{dz_2}{2\I\pi} \phi_{\vec x}(z_1,z_2) + \\ (1-q)\oint_{\mathcal C} \frac{dz}{2\I\pi} \frac{1-q^2 z^2}{1-q z^2}\frac{F_{x_1}(z)F_{x_2}(qz)}{qz} +(1-q) \oint_{\mathcal C} \frac{dz}{2\I\pi} \frac{1-z^2}{1-q z^2}\frac{F_{x_1}(z)F_{x_2}(1/z)}{z}. 
	\label{eq:explicitsecondmoment}
\end{multline}
	\label{theo:momentformula}
\end{theorem}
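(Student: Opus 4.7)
The strategy is to apply Proposition \ref{prop:freeevolutionu} with the empty initial condition (for which $H(\emptyset,\vec x)=1$): I verify that the function $v_n(t;\vec x)$ defined in \eqref{eq:deffunctionv} satisfies conditions (1)--(5) there. For the free evolution (1), the single-variable identity
\[
\p\,\frac{1-qz}{1-z} + \q\,\frac{1-z}{1-qz} - (\p+\q) = \frac{\p(1-q)^2 z}{(1-z)(1-qz)}
\]
shows that $\Delta^{\p,\q}F_x(z) = \partial_t F_x(z)$, since the right-hand side is exactly the $t$-coefficient in the exponential factor of $F_x$; because residues and contour integrals commute with $\partial_t$ and with $\Delta^{\p,\q}$ (which acts only on $\vec x$), this lifts from $\phi_{\vec x}(\vec z)$ to $v_n$. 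For the bulk two-body condition (2), at $x_{i+1}=x_i+1$ the analogous Bethe matching identity
\[
\p\,\frac{1-qz_{i+1}}{1-z_{i+1}} + \q\,\frac{1-z_i}{1-qz_i} - (\p+\q) = \frac{\p(1-q)(z_{i+1}-qz_i)}{(1-z_{i+1})(1-qz_i)}
\]
produces a linear factor $(z_{i+1}-qz_i)$ that cancels the pole of the Bethe kernel $\prod_{k<l}\frac{z_k-z_l}{qz_k-z_l}$ at $z_{i+1}=qz_i$. Within each residue term of \eqref{eq:deffunctionv} this cancellation must be tracked case by case: either the factor multiplies a Bethe kernel evaluated at a regular point inside the diagram, or two diagrams of $S(\lambda)$ differing by a single bulk arrow cancel against each other.

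The site-$1$ boundary condition (3) is the most delicate. I would first establish the algebraic identity
\[
F_0(z) - (\rho q + 1-\rho)\,F_1(z) = F_1(z)\cdot \frac{(1-q)(\rho+(1-\rho)z)}{1-z},
\]
which rewrites $v_n(t;0,x_2,\ldots)-(\rho q+1-\rho)v_n(t;1,x_2,\ldots)$ as a residue sum of an integrand whose boundary factor $\rho/(\rho+(1-\rho)z_1)$ has cancelled. Under the hypothesis $\rho > 1/(1+\sqrt q)$, the remaining potential pole $z_1=-\rho/(1-\rho)$ lies outside $\mathcal C$, so no new contribution arises from it. The vanishing of the modified sum is then a consequence of pairing the reflection residues at $z_j=1/z_i$ -- produced by the factor $(1-qz_iz_j)/(1-z_iz_j)$ through the $\leftarrowminus$ arrows in diagrams -- against complementary residues indexed by diagrams with that arrow removed, in close analogy with the delta Bose gas argument of \cite[Section~7.3]{borodin2016directed}.

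The growth bound (4) is routine: on the compact contour $\mathcal C$ each $F_x$ is uniformly bounded by $C_T M^x$ (using $x_j\geq 1$ to cancel the $1/(1-z)$ pole against $(1-z)^{x_j}$), and the sum over $(\lambda,I)$ is finite. For the initial condition (5), $v_1(0;x)=1$ follows from a direct residue computation at $z=0$ giving $F_x(0)=1$, while for $n\geq 2$ the identity $v_n(0;\vec x)=1$ reduces to a combinatorial cancellation among residue terms at $t=0$; I would prove it by induction on $n$, using the boundary identity verified in Step~3 to telescope $v_n(0;\vec x)$ down to $v_{n-1}(0;x_2,\ldots,x_n)$. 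The main obstacle is Step~3 (and its companion at $t=0$ for Step~5): the fine sign structure $(-1)^{n-\ell(\lambda)}/m_1!m_2!\cdots$ and the distinction between $\rightarrowplus$ and $\leftarrowminus$ arrows are engineered precisely so that the required residue pairings cancel, and carrying out this bookkeeping explicitly is where the real work lies. The low-$n$ formulas \eqref{eq:firstmoment}--\eqref{eq:explicitsecondmoment} serve as useful consistency checks.
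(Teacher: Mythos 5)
Your overall route is the paper's route: exhibit $v_n$ and verify conditions (1)--(5) of Proposition \ref{prop:freeevolutionu} with empty initial data. Steps (1) and (4) are correct as you state them, and the algebraic identity $F_0(z)-(\rho q+1-\rho)F_1(z)=F_1(z)\tfrac{(1-q)(\rho+(1-\rho)z)}{1-z}$ is indeed the right opening move for condition (3). But the actual content of the theorem is the cancellation bookkeeping in conditions (2), (3) and (5), which you explicitly defer (``carrying out this bookkeeping explicitly is where the real work lies''), and the mechanisms you sketch for it are not the ones that work. For condition (2), after the factor $(qz_i-z_{i+1})$ appears, the sum over $S(\lambda)$ is killed by a four-case analysis on the relative position of $i$ and $i+1$ in the diagram, via involutions that reverse snippets of a line or exchange the two labels; crucially, in the case of an arrow $i\leftarrowminus i+1$ the matching requires the change of variables $z_i\to 1/z_i$, which moves the contour to radius $\sqrt q$, and then a deformation back to $\mathcal C$. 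That deformation is legitimate only because products of the form \eqref{eq:nosingularitiesannulus} have no singularity in the annulus of radii $[\sqrt q,1/\sqrt q]$, and this is exactly where the hypothesis $\rho/(1-\rho)>1/\sqrt q$ is used. Your proposal never mentions this deformation and instead locates the role of the hypothesis in Step 3, where it is in fact not needed: there the factor $\rho/(\rho+(1-\rho)z_1)$ cancels identically against the numerator $(\rho+(1-\rho)z_1)$, so no pole at $-\rho/(1-\rho)$ survives.

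For condition (3) itself, the vanishing is not obtained by ``pairing reflection residues against diagrams with that arrow removed''; it follows from the symmetry $\widehat\phi(z_1,\dots)\,dz_1=\widehat\phi\bigl(1/(qz_1),\dots\bigr)\,d\bigl(1/(qz_1)\bigr)$ (the contour $\mathcal C$ being invariant under $z_1\mapsto 1/(qz_1)$) combined with the involution that reverses the line through the label $1$, which always has the form $A\rightarrowplus 1\leftarrowminus B$. The add/remove-an-arrow pairing you describe is, roughly, the mechanism of condition (5), not (3); and for (5) your plan to ``telescope using the boundary identity of Step 3'' is not what is needed. The correct induction integrates out $z_n$ by residues at $t=0$: the pole at $z_n=0$ reproduces $v_{n-1}(0;x_2,\dots,x_n)$... more precisely the diagrams where $n$ sits alone on a line give $R_0=v_{n-1}$, while their residues at $z_n=qz_i$ and $z_n=1/z_i$ must be matched bijectively (with a sign coming from $\ell(\lambda)=\ell(\tilde\lambda)+1$ and careful handling of the symmetry factors $m_1!m_2!\cdots$) against the terms indexed by diagrams in which $n$ is linked by an arrow; the hypothesis on $\rho$ is again used to keep $-\rho/(1-\rho)$ outside $\mathcal C$. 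In short, your proposal is a correct high-level plan coinciding with the paper's, but the involution constructions, the contour-deformation step, and the residue-matching bijection that constitute the proof are missing, and where you do guess at them the guesses point at the wrong steps.
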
 
The condition $\rho\in \left(\frac{1}{1+\sqrt{q}},1 \right]$ is a technical restriction. It is equivalent to  $\frac{\rho}{1-\rho}>\frac{1}{\sqrt{q}}$, which ensures that the only singularity of the function $F_x(\vec z)$, defined in \eqref{eq:defF}, inside the contour $\mathcal C$ is at $z=1$. We expect that for values of $\rho$ that do not satisfy this restriction, $v_n(t; \vec x)$ should be given by the analytic continuation in $\rho$ of \eqref{eq:deffunctionv}, but it would be quite cumbersome to write it explicitly, so we do not pursue this. 

\begin{remark}  The specific form of the function $\phi_{\vec x}(\vec z)$ is inspired from several recent works on integrable half-space probabilistic systems, namely half-line ASEP with closed boundary \cite{tracy2013bose}  and the stochastic six-vertex model in a half-quadrant \cite[section 5]{barraquand2018half} (see also \cite{borodin2016directed} and \cite{barraquand2022random} for integral formulas with a similar structure). 
\end{remark}

\subsection{Proof of Theorem \ref{theo:momentformula}} 
To prove Theorem \ref{theo:momentformula}, we will check below that the function $v_n(t; \vec x)$ defined in \eqref{eq:deffunctionv} satisfies the conditions from  Proposition \ref{prop:freeevolutionu}. 
	
\subsubsection{Condition (1)} We have 
	\begin{equation*} \Delta^{\p,\q}\left( \frac{1-z}{1-q z} \right)^{x} =   \frac{(1-q)^2 z \p}{(1-z)(1-qz)}  \left( \frac{1-z}{1-q z} \right)^{x}, 
	\end{equation*}
where $\Delta^{\p,\q}$ was defined in \eqref{eq:defDelta} and acts on functions of the variable $x$. 
	Hence, we have that  $\frac{d}{dt}  \phi_{\vec x}(\vec z) = \Delta^{\p,\q}\phi_{\vec x}(\vec z)$, and by linearity of the integral and residues, we conclude that   
	\begin{equation*}\frac{d}{dt} v_n(t; \vec x) = \Delta^{\p,\q}v_n(t; \vec x).\end{equation*}
	
\subsubsection{Condition (2)} When $x_i+1=x_{i+1}$, 
	\begin{equation*}  \p v_n(t; \vec x_{i+1}^-) + \q  v_n(t; \vec x_i^+) - (\p+\q)  v_n(t; \vec x) \end{equation*}
	has the same expression as $v_n(t; \vec x_{i+1}^-)$ in \eqref{eq:deffunctionv}, with an extra factor 
		\begin{equation*} \p + \q \left( \frac{1-z_{i}}{1-qz_{i}}\right)\left(\frac{1-z_{i+1}}{1-qz_{i+1}}\right)-(\p+\q)\left(\frac{1-z_{i+1}}{1-qz_{i+1}}\right) =\frac{(\p-\q)(qz_i-z_{i+1})}{(1-qz_{i})(1-qz_{i+1})}
		\end{equation*}
	in the function $\phi_{\vec x}$.  This extra factor brings poles at $z_i=1/q$ and $z_{i+1}=1/q$ located outside the contour $\mathcal C$, and it brings  a factor $(qz_i-z_{i+1})$ which  cancel the one present in the denominator.
	More precisely, we write 
	\begin{equation}
		\p v_n(t; \vec x_{i+1}^-) + \q  v_n(t; \vec x_i^+) - (\p+\q)  v_n(t; \vec x) = \sum_{\lambda \vdash n} \frac{(-1)^{n-\ell(\lambda)}}{m_1!m_2!\dots}\sum_{I\in S(\lambda)} \oint_{\mathcal C} \frac{dz_{i_{\mu_1}}}{2\I\pi}  \oint_{\mathcal C} \frac{dz_{j_{\mu_2}}}{2\I\pi} \dots \; \Res{I} \left\lbrace \widetilde \phi(\vec z)\right\rbrace, 
		\label{eq:sumphitilde}
	\end{equation} 
	where 
	\begin{equation} \widetilde \phi(\vec z) =\frac{(\p-\q)(qz_i-z_{i+1})}{(1-qz_{i})(1-qz_{i+1})} \phi_{\vec x_{i+1}^-} (\vec z).
\label{eq:defphitilde}
	\end{equation}
Our goal is to show that \eqref{eq:sumphitilde} equals $0$. We will show that for each $\lambda$, the term indexed by $\lambda$ in \eqref{eq:sumphitilde} vanishes. For each $\lambda$, we will show that for 
some diagrams $I\in S(\lambda)$, the term indexed by $I$ in \eqref{eq:sumphitilde} equals $0$, while for other diagrams, we will show that their contribution exactly cancels that of another diagram. More precisely, we will use several times the following argument. For a certain subset of diagrams  $\mathcal I \subset S(\lambda)$, we will introduce an involution $\sigma :\mathcal I\to\mathcal I$ on diagrams such that 
\begin{equation}
 \oint_{\mathcal C} \frac{dz_{i_{\mu_1}}}{2\I\pi}  \oint_{\mathcal C} \frac{dz_{j_{\mu_2}}}{2\I\pi} \dots \;	\Res{I} \left\lbrace \widetilde \phi(\vec z)\right\rbrace = - \oint_{\mathcal C} \frac{dz_{i_{\mu_1}}}{2\I\pi}  \oint_{\mathcal C} \frac{dz_{j_{\mu_2}}}{2\I\pi} \dots \;\Res{\sigma(I)} \left\lbrace \widetilde \phi(\vec z)\right\rbrace.
 \label{eq:involutioneffect}
\end{equation}
Since an involution is in particular a bijection, we may write that 
\begin{equation*}
	\sum_{I\in \mathcal I} \oint_{\mathcal C} \frac{dz_{i_{\mu_1}}}{2\I\pi}  \oint_{\mathcal C} \frac{dz_{j_{\mu_2}}}{2\I\pi} \dots \;	\Res{I} \left\lbrace \widetilde \phi(\vec z)\right\rbrace = 	\sum_{I\in \mathcal I} \oint_{\mathcal C} \frac{dz_{i_{\mu_1}}}{2\I\pi}  \oint_{\mathcal C} \frac{dz_{j_{\mu_2}}}{2\I\pi} \dots \;	\Res{\sigma(I)} \left\lbrace \widetilde \phi(\vec z)\right\rbrace,
\end{equation*}
so that using \eqref{eq:involutioneffect}, 
\begin{equation}
	\sum_{I\in \mathcal I} \oint_{\mathcal C} \frac{dz_{i_{\mu_1}}}{2\I\pi}  \oint_{\mathcal C} \frac{dz_{j_{\mu_2}}}{2\I\pi} \dots \;	\Res{I} \left\lbrace \widetilde \phi(\vec z)\right\rbrace = 	-\sum_{I\in \mathcal I} \oint_{\mathcal C} \frac{dz_{i_{\mu_1}}}{2\I\pi}  \oint_{\mathcal C} \frac{dz_{j_{\mu_2}}}{2\I\pi} \dots \;	\Res{I} \left\lbrace \widetilde \phi(\vec z)\right\rbrace=0. 
	\label{eq:conclusioninvolution}
\end{equation}
Another way to interpret \eqref{eq:involutioneffect} is that in the sum \eqref{eq:conclusioninvolution}, the terms corresponding to a diagram $I$ and its image $\sigma(I)$ exactly cancel each other, and when $I=\sigma(I)$, \eqref{eq:involutioneffect} shows that the corresponding term is the opposite of itself, hence equal to zero. 

Fix some partition $\lambda\vdash n$. It is convenient to partition the set of diagrams $S(\lambda)$ into the following four cases:
	 \begin{enumerate}
	 	\item The number $i$ and $i+1$ occur in the diagram $I$ in neighbouring positions on the same line, separated by an arrow bearing a plus sign, that is,  the diagram $I$ contains $ i+1 \rightarrowplus i $ or $i\leftarrowplus i+1$. 
	 	\item The number $i$ and $i+1$ occur in the diagram $I$ in neighbouring positions on the same line, separated by an arrow bearing a minus sign, that is,  the diagram $I$ contains $i\leftarrowminus i+1$. 
	 	\item The numbers $i$ and $i+1$ occur in the diagram $I$ in the same line but not in neighbouring positions, in which case they are necessarily separated by an arrow bearing a minus sign. 
	 	\item The numbers $i$ and $i+1$ belong to two different lines in the diagram $I$. 
	 \end{enumerate}
 We treat these four cases separately below. 
 
\textbf{Case (1):} By \eqref{eq:defphitilde}, the function $\widetilde\phi$ has no singularity when $z_{i+1}=qz_i$, so that for any diagram $I$ containing $ i+1 \rightarrowplus i $ or $i\leftarrowplus i+1$, then $\Res{I} \left\lbrace \widetilde \phi(\vec z)\right\rbrace=0$.

\textbf{Case (2):} Any diagram $I$ in that case contains $i\leftarrowminus i+1$. For such a diagram $I$, the line in the diagram $I$ containing the arrow $i\leftarrowminus i+1$ takes the form 
\begin{equation}
	A\rightarrowplus i\leftarrowminus i+1 \leftarrowplus B,
	\label{eq:snippet3}
\end{equation}
where $A$ and $B$ are (possibly empty) sequences of numbers separated by arrows that we call snippets. To this diagram $I$, we associate the diagram $\sigma(I)$ obtained from $I$ by changing the line \eqref{eq:snippet3} to 
\begin{equation*}
	B'\rightarrowplus i\leftarrowminus i+1 \leftarrowplus A',
	\label{eq:snippet4}
\end{equation*}
where $A'$ (resp $B'$) is obtained from $A$ (resp. B) by reversing the direction of arrows and reordering the numbers between them. 
\begin{example}Assume that $i=1$ and consider the diagram  
	\begin{eqnarray*}
I= 	&&	8\rightarrowplus 6 \rightarrowplus 1 \leftarrowminus 2\leftarrowplus 3 \leftarrowplus 7\\
		&&	5\rightarrowplus 4.
\end{eqnarray*}
In this case, the snippets $A$ and $B$ in \eqref{eq:snippet3} are $A = 8\rightarrowplus 6 $ and $B= 3 \leftarrowplus 7$, so that $A' = 6\leftarrowplus 8 $,  $B'= 7 \rightarrowplus 3$ and 
	\begin{eqnarray*}
	\sigma(I)= 	&&	7\rightarrowplus 3 \rightarrowplus 1 \leftarrowminus 2\leftarrowplus 6 \leftarrowplus 8\\
	&&	5\rightarrowplus 4.
\end{eqnarray*}
\end{example}

It is clear that $\sigma$ is an involution of the set of diagrams belonging to case (2).  Consider then the expression $\Res{I} \left\lbrace \widetilde \phi(\vec z) \right\rbrace$, as a function of the variable $z_i$. 
Due to the form of the function $\widetilde\phi$ in \eqref{eq:formofphitilde}, we see that by the change of variables $z_i\to 1/z_i$, 
\begin{equation}
	\oint_{\mathcal C} \frac{dz_i}{2\I\pi} \Res{I} \left\lbrace \widetilde \phi(\vec z) \right\rbrace  = - \oint_{\mathcal C'} \frac{dz_i}{2\I\pi} \Res{\sigma(I)} \left\lbrace \widetilde \phi(\vec z) \right\rbrace,
	\label{eq:equalityofresidues}
\end{equation} 
where the contour $\mathcal C'$ is a positively oriented circle of radius $\sqrt{q}$, and recall that in the right-hand-side, $\Res{\sigma(I)} \left\lbrace \widetilde \phi(\vec z) \right\rbrace$ is as a function of the variable $z_{i}$. We will now show that we can deform the contour $\mathcal C'$ to $\mathcal C$, so that 
\begin{equation}
	\oint_{\mathcal C} \frac{dz_i}{2\I\pi} \Res{I} \left\lbrace \widetilde \phi(\vec z) \right\rbrace  = - \oint_{\mathcal C} \frac{dz_i}{2\I\pi} \Res{\sigma(I)} \left\lbrace \widetilde \phi(\vec z) \right\rbrace. 
	\label{eq:equalityofresidues2}
\end{equation}  
Before proving that this deformation of contours is valid, let us illustrate how \eqref{eq:equalityofresidues} works through the following example. 
\begin{example}
\label{ex:invarianceinversion}	
Let us consider the minimal example of the case  $n=2$, $\lambda=(2)$, and $I=1\leftarrowminus 2$. In this case, $\sigma(I)=I$. We have, letting $x=x_1=x_2-1$, 
\begin{equation}
	\widetilde \phi(z_1,z_2) =q\frac{(\p-\q)(qz_1-z_{2})}{(1-qz_{1})(1-qz_{2})} \frac{z_1-z_2}{qz_1-z_2} \frac{1-q z_1z_2}{1-z_1z_2}\frac{F_{x}(z_1)}{z_1}\frac{F_{x}(z_2)}{z_2}, 
	\label{eq:specialphitilde}
\end{equation}
so that 
\begin{equation*}
	\Res{z_{2}\to 1/z_1} \lbrace 	\widetilde \phi(\vec z)\rbrace  = \frac{-q(\p-\q)(1-q)(z_1-1/z_1)}{(1-qz_{1})(1-q/z_{1})}  \frac{F_{x}(z_1)F_{x}(1/z_1)}{z_1}.
	\label{eq:specialresiduephitilde}
\end{equation*}
Due to the factor $(z_1-1/z_1)$, we see that under the change of variables $z_1\to 1/z_1$, 
\begin{equation}
	\oint_{\mathcal C} \frac{dz_1}{2\I\pi} \Res{z_{2}\to 1/z_1} \lbrace 	\widetilde \phi(\vec z)\rbrace  = - \oint_{\mathcal C'} \frac{dz_1}{2\I\pi} \Res{z_{2}\to 1/z_1} \lbrace 	\widetilde \phi(\vec z)\rbrace,
	\label{eq:specialequalityresidue}
\end{equation}
which is a special case of \eqref{eq:equalityofresidues}. 
Furthermore, we notice that 
\begin{multline}  F_{x}(z) F_{x}(1/z) = \\  \frac{1-qz^2}{1-z}\frac{1-q/z^2}{1-1/z} \exp\left( \frac{(1-q)^2 \p t(1+q)z}{(1-qz)(q-z)}\right) \left( \frac{1-z}{1-q z} \right)^{x}\left( \frac{1-1/z}{1-q /z} \right)^{x}\frac{\rho}{\rho + (1-\rho)z}\frac{\rho}{\rho + (1-\rho)/z}.
	\label{eq:simplification}
\end{multline}
While both $F_{x}(z)$ and $ F_{x}(1/z)$ have an essential singularity at $z=1$, in \eqref{eq:simplification}, the  exponential term in the product no longer has a singularity at $z=1$. Moreover, the denominators $(1-z)$ and $(1-1/z)$ in \eqref{eq:simplification} are canceled by the factors $\left( \frac{1-z}{1-q z} \right)^{x}\left( \frac{1-1/z}{1-q /z} \right)^{x}$ since $x\geq 1$. Thus,  \eqref{eq:specialresiduephitilde} has no singularity at $z=1$, so that one may deform the contour $\mathcal C'$ to $\mathcal C$ in \eqref{eq:specialequalityresidue}, and conclude that 
\begin{equation*}
	\oint_{\mathcal C} \frac{dz_1}{2\I\pi} \Res{z_{2}\to 1/z_1} \lbrace 	\widetilde \phi(\vec z)\rbrace  = - \oint_{\mathcal C} \frac{dz_1}{2\I\pi} \Res{z_{2}\to 1/z_1} \lbrace 	\widetilde \phi(\vec z)\rbrace= 0.
\end{equation*} 
\end{example}

Going back to the general case, since the function $F_x(z)$ has singularities at $z=1$, $z=1/q$ and $z=-\rho/(1-\rho)$, \eqref{eq:simplification} implies that for any nonnegative integers $r,s$, the function  
\begin{equation}
	F_x(q^rz)\dots F_x(qz)F_{x}(z) F_{x}(1/z)F_x(q/z)\dots F_x(q^s/z)
	\label{eq:nosingularitiesannulus}
\end{equation}
has no singularities at $z=1$. Letting $\omega=-\rho/(1-\rho)$, we see that \eqref{eq:nosingularitiesannulus} has singularities only at the points
\begin{equation*} q^{s+1}, q^s, \dots ,  q^2, q, q^{-1}, q^{-2} \dots, q^{-r-1} \text{ and } \omega q^{-r}, \dots, \omega, 1/\omega, \dots, q^s/\omega.\end{equation*}
In particular, since $\vert \omega\vert >1/\sqrt{q}$ by hypothesis, \eqref{eq:nosingularitiesannulus} has no singularities in the annulus formed by complex numbers with radius in $[\sqrt{q},1/\sqrt{q}]$.
This implies that in \eqref{eq:equalityofresidues}, the contour $\mathcal C'$ can be deformed to become $\mathcal C$ without encountering any singularities, so that \eqref{eq:equalityofresidues2} holds. 
Using \eqref{eq:conclusioninvolution}, we have proven that the sum of terms in \eqref{eq:sumphitilde} over all diagrams $I$ corresponding to case (2) is zero.

\textbf{Case (3):} For any diagram  $I$ belonging to that case, there exist a number $k<i$ such that the diagram $I$ contains the snippet 
\begin{equation}
	i \rightarrowplus  A \rightarrowplus k \leftarrowminus B \leftarrowplus i+1 ,
	\label{eq:snippet1}
\end{equation} 
	 or 
\begin{equation}
	 	i+1 \rightarrowplus  A \rightarrowplus k \leftarrowminus B \leftarrowplus i,
	 	\label{eq:snippet2}
	 \end{equation} 
 where $A$ and $B$ are (possibly empty) sequences of numbers separated by arrows. Here we do not mean that one line of $I$ has the form of \eqref{eq:snippet1} or \eqref{eq:snippet2}, but rather that one line of the diagram $I$ contains one of those snippets. There may be more arows on the left/right of the numbers $i$ and $i+1$. Since the function $\widetilde\phi$ can be written as 
	 \begin{equation}
	 	  \widetilde \phi(\vec z) = (z_i-z_{i+1})\frac{1-qz_{i}z_{i+1}}{1-z_iz_{i+1}} f\left(z_i; \lbrace z_j\rbrace_{j\neq i,i+1}\right)f\left(z_{i+1}; \lbrace z_j\rbrace_{j\neq i,i+1}\right),
	 	  \label{eq:formofphitilde}
 	  \end{equation}
	 for some function $f$, 
	 the contribution of a diagram $I$ containing \eqref{eq:snippet1} exactly cancels the contribution of the diagram $\sigma(I)$ where $\sigma(I)$ is exactly the same as $I$ up to the replacement of \eqref{eq:snippet1} by \eqref{eq:snippet2} and vice-versa. Again, it is clear that $\sigma$ defined an involution. Due to the prefactor $(z_i-z_{i+1})$ in \eqref{eq:formofphitilde}, we find that 
	$\Res{I} \left\lbrace \widetilde \phi(\vec z) \right\rbrace=-\Res{\sigma(I)} \left\lbrace \widetilde \phi(\vec z) \right\rbrace$, so that we may apply \eqref{eq:conclusioninvolution} to the set of diagrams in case (3).

\textbf{Case (4):} Now we consider the case when the diagram $I$ contains snippets of the form 
\begin{eqnarray}
	A \longleftrightarrow& i &\longleftrightarrow B 	\label{eq:snippet5}\\
	C \longleftrightarrow& i+1&\longleftrightarrow D, \nonumber
\end{eqnarray}
where the arrows $\longleftrightarrow$ may be any type of arrow among  $\rightarrowplus, \leftarrowplus$ and $\leftarrowminus$. 
Consider the diagram $\sigma(I)$ obtained from $I$ by replacing the snippet \eqref{eq:snippet5} by 
\begin{eqnarray}
	A \longleftrightarrow& i+1 &\longleftrightarrow B 	\label{eq:snippet6}\\
	C \longleftrightarrow& i&\longleftrightarrow D,\nonumber
\end{eqnarray}
assuming that the arrows in \eqref{eq:snippet5} and the corresponding ones in \eqref{eq:snippet6} are of the same type. Let us call $a$ the minimal number in the first line of \eqref{eq:snippet5} and $b$ the minimal number in the second line of \eqref{eq:snippet5}, and let us call $a', b'$ the minimal numbers in the lines of \eqref{eq:snippet6} ($(a,b)$ and $(a',b')$ are not necessarily the same). By the change of variables $(z_a,z_b)\to (z_{b'},z_{a'})$, we have that by \eqref{eq:formofphitilde}, 
\begin{equation} \oint_{\mathcal C} \frac{dz_a}{2\I\pi} \oint_{\mathcal C} \frac{dz_b}{2\I\pi} \Res{I} \left\lbrace \widetilde \phi(\vec z) \right\rbrace = -  \oint_{\mathcal C} \frac{dz_a}{2\I\pi} \oint_{\mathcal C} \frac{dz_b}{2\I\pi} \Res{\sigma(I)} \left\lbrace \widetilde \phi(\vec z) \right\rbrace.
\label{eq:symmetrytwovariables}
\end{equation}
The minus term comes from the factor $z_a-z_b$ in $\widetilde \phi(\vec z)$. 
Again,  we may apply \eqref{eq:conclusioninvolution} to the set of diagrams in case (4).   
\begin{example}
	To illustrate how \eqref{eq:symmetrytwovariables} works, let us consider the minimal example of the case $n=2$, $\lambda=(1,1)$, in which case the only possible diagram is a diagram without any arrows. The function $\widetilde\phi$ already calculated in \eqref{eq:specialphitilde} simplifies to 
	\begin{equation*}
	\widetilde \phi(z_1,z_2) =\frac{q(\p-\q)(z_1-z_2)}{(1-qz_{1})(1-qz_{2})} \frac{1-q z_1z_2}{1-z_1z_2}\frac{F_{x}(z_1)}{z_1}\frac{F_{x}(z_2)}{z_2}, 
	\end{equation*}
so that it satisfies $\widetilde\phi(z_1,z_2)=-\widetilde \phi(z_2,z_1)$. Since variables are integrated along the same contour, the change of variables $(z_1,z_2)\to(z_1,z_2)$ yields 
\begin{equation*}
\oint_{\mathcal C} \frac{dz_1}{2\I\pi} \oint_{\mathcal C} \frac{dz_2}{2\I\pi} \widetilde\phi(z_1,z_2) = - \oint_{\mathcal C} \frac{dz_1}{2\I\pi} \oint_{\mathcal C} \frac{dz_2}{2\I\pi} \widetilde\phi(z_1,z_2),
\end{equation*}
so that the integral must equal $0$. This is a special case of \eqref{eq:symmetrytwovariables} where the diagram $I$ has no arrows.
\end{example}

Finally,  having treated the four cases above,  we have shown that when $x_{i+1}=x_i+1$,  the sum over all the terms in \eqref{eq:sumphitilde}  vanishes, so that Condition (2) is satisfied.

\subsubsection{Condition (3)} We need to check that 	
\begin{equation}(\rho -\rho q-1)v_n(t;1,x_2,\dots) + v_n(t;0,x_2,\dots) = 0.\label{eq:BCtocheck}\end{equation}
	The left-hand-side of \eqref{eq:BCtocheck} can be written as 
	\begin{equation}
	\sum_{\lambda \vdash n} \frac{(-1)^{n-\ell(\lambda)}}{m_1!m_2!\dots}\sum_{I\in S(\lambda)} \oint_{\mathcal C} \frac{dz_{i_{\mu_1}}}{2\I\pi}  \oint_{\mathcal C} \frac{dz_{j_{\mu_2}}}{2\I\pi} \dots \; \Res{I} \left\lbrace \widehat \phi(\vec z)\right\rbrace, 
	\label{eq:sumphitilde2}
\end{equation} 
where the function $\widehat\phi$ is now defined by 
\begin{equation} \widehat \phi(\vec z) = \frac{(1-q)(\rho + (1-\rho)z_1)}{1-qz_1} \phi_{0, x_2, \dots, x_n} (\vec z).
\label{eq:defphitildebis}
\end{equation}
	Since $1$ is the minimum of the set $\lbrace 1,\dots, n\rbrace$, any diagram $I$ in the sum \eqref{eq:sumphitilde2} must contain a line of the form 
	\begin{equation}
		A\rightarrowplus 1\leftarrowminus B,
		\label{eq:linecontaining1}
	\end{equation}
	where $A$ and $B$ are possibly empty snippets. 
	Let us also consider the diagram $\sigma(I) $ obtained from $I$ by changing the line \eqref{eq:linecontaining1} to
		\begin{equation*}
		B'\rightarrowplus 1\leftarrowminus A',
		\label{eq:linecontaining1bis}
	\end{equation*} 
	where $A',B'$ are obtained from $A,B$ by reversing the direction of arrows. Again, $\sigma$ is an involution on the set of all diagrams appearing in \eqref{eq:sumphitilde2}. 
	Using the explicit expression for $\widehat\phi$ in \eqref{eq:defphitildebis}, we notice that 
	\begin{equation*} \widehat\phi(z_1,z_2,\dots, z_n) dz_1 = \widehat\phi\Big(1/(qz_1),z_2,\dots, z_n\Big) d(1/(qz_1)).\end{equation*}
	Hence, by the change of variables $z_1\to 1/(qz_1)$ (the contour $\mathcal C$ remains unchanged under this change of variables, up to the orientation of the contour), we have that 
	\begin{equation*} \oint_{\mathcal C} \frac{dz_1}{2\I\pi} \Res{I} \left\lbrace \widehat \phi(\vec z)\right\rbrace = -\oint_{\mathcal C} \frac{dz_1}{2\I\pi} \Res{\sigma(I)} \left\lbrace \widehat \phi(\vec z)\right\rbrace.\end{equation*}
	Again, using \eqref{eq:conclusioninvolution}, this shows that \eqref{eq:sumphitilde2} equals zero.

\subsubsection{Condition (4)}
The modulus of the integral is smaller than the integral of the modulus. Thus, inspecting the dependence in  $x_1$ in the integral formula \eqref{eq:deffunctionv}, and since we are integrating over a finite contour $\mathcal C$, we may find constants 
	$c,C>0$ such that 
	\begin{equation*} \vert v_k(t; \vec x) \vert \leqslant Ce^{c\vert x_1\vert}  \vert v_{k-1}(t;  x_2, \dots, x_n)\vert.\end{equation*}
By recurrence, it implies that Condition (4) is satisfied. 
	
\subsubsection{Condition (5)} When $t=0$, the function $\phi_{\vec x}(\vec z)$ defined in \eqref{eq:defphi} has no singularity  at $z_i=1$ anymore, for all $1\leq i\leq n$. We will show that in that case, we can evaluate the integrals in \eqref{eq:deffunctionv} by residues. It will turn out that almost all residues will cancel each other, except for the residue at  $z_n=0, z_{n-1}=0, \dots, z_1=0$, arising only for the diagram $I$ without any arrow, in which case the residue equals $1$. To formalize this idea, we will proceed by recurrence. We 
will show that for all $n\geq 2$, 
\begin{equation}
	v_n(0;\vec x) = v_{n-1}(0;x_1, \dots, x_{n-1}).
	\label{eq:recurrencetoshow}
\end{equation}
Hence, by recurrence, it suffices to compute 
\begin{equation*} v_1(0; x) =  \oint_{\mathcal C} \frac{dz_1}{2\I\pi} \frac{1-qz_1^2}{1-z_1}\left( \frac{1-z_1}{1-q z_1} \right)^{x_1}\frac{\rho}{\rho + (1-\rho)z_1} \frac{1}{z_1}.
\end{equation*}
The pole at $z_1=\frac{-\rho}{1-\rho}$ is outside the contours, there is no residue at $z_1=1$ since $x_1\geq 1$, and the residue at $z_1=0$ is readily evaluated to be $1$. This shows that 
for all $n\geq 1$ and for all $\vec x\in \Weyl{n}_{\geq 1}$, 
\begin{equation}
	v_n(0;\vec x) = 1,
\end{equation}
so that Condition (5) is satisfied for the empty initial condition. 
In order to prove \eqref{eq:recurrencetoshow}, let us consider \eqref{eq:deffunctionv} and perform the integrations over the variable $z_n$. The variable $z_n$ only arises in terms corresponding to diagrams $I$ where the number $n$ belongs to a line without arrow. This occurs only for partitions $\lambda$ such that $\lambda_{\ell(\lambda)}=1$. In that case, the function $\phi_{\vec x}(\vec z)$ has simple poles at $z_n=0$, $z_n=qz_i$ for $i<n$, and $z_n=1/z_i$ for $i<n$. 

Let us denote by $\Lambda_n$ the set of partitions of $n$. We may decompose 
\begin{equation*} \Lambda_n=\Lambda_n^{(1)}\sqcup \Lambda_n^{(>1)}, \end{equation*}
where $\Lambda_n^{(1)}=\lbrace \lambda\vdash n: \lambda_{\ell(\lambda)}=1\rbrace $ and $\Lambda_n^{(>1)}=\lbrace \lambda\vdash n: \lambda_{\ell(\lambda)}>1\rbrace$. There is a natural bijection between the set $\Lambda_n^{(1)}$ and the set $\Lambda_{n-1}$ (this bijection consists in removing the last part of the partition $\lambda\in \Lambda_n^{(1)}$). 

For a partition $\lambda\in \Lambda_n$, let us decompose $S(\lambda)$ as 
\begin{equation*}S(\lambda) = S^{\rm linked}(\lambda)\sqcup S^{\rm free}(\lambda)\end{equation*}
where $S^{\rm free}(\lambda)$ is the set of diagrams $I\in S(\lambda)$ such that the number $n$ belongs to a line without arrows. This set is nonempty only when $\lambda\in \Lambda_n^{(1)}$. 

Let us first consider the sum 
\begin{equation} \sum_{\lambda \in \Lambda_n^{(1)}} \frac{(-1)^{n-\ell(\lambda)}}{m_1!m_2!\dots}\sum_{I\in S^{\rm free}(\lambda)} \oint_{\mathcal C} \frac{dz_{i_{\mu_1}}}{2\I\pi}  \oint_{\mathcal C} \frac{dz_{j_{\mu_2}}}{2\I\pi} \dots \; \Res{I} \left\lbrace \oint_{\mathcal C}\frac{dz_n}{2\I\pi} \phi_{\vec x}(\vec z)\right\rbrace.
	\label{eq:initialconditiontoevaluate}
	\end{equation}
Evaluating the integral over $z_n$ by residues, we have 
\begin{multline}
\eqref{eq:initialconditiontoevaluate} =  \underbrace{\sum_{\lambda \in \Lambda_n^{(1)}} \frac{(-1)^{n-\ell(\lambda)}}{m_1!m_2!\dots}\sum_{I\in S^{\rm free}(\lambda)} \oint_{\mathcal C} \frac{dz_{i_{\mu_1}}}{2\I\pi}  \oint_{\mathcal C} \frac{dz_{j_{\mu_2}}}{2\I\pi} \dots  \Res{I} \left\lbrace \Res{z_n=0} \;\phi_{\vec x}(\vec z)\right\rbrace}_{:=R_0}\\
+ \sum_{i=1}^{n-1}  \underbrace{\sum_{\lambda \in \Lambda_n^{(1)}} \frac{(-1)^{n-\ell(\lambda)}}{m_1!m_2!\dots}\sum_{I\in S^{\rm free}(\lambda)} \oint_{\mathcal C} \frac{dz_{i_{\mu_1}}}{2\I\pi}  \oint_{\mathcal C} \frac{dz_{j_{\mu_2}}}{2\I\pi} \dots  \Res{I} \left\lbrace \Res{z_n=qz_i} \;\phi_{\vec x}(\vec z)\right\rbrace}_{:=R_i}\\
+\sum_{i=1}^{n-1} \underbrace{ \sum_{\lambda \in \Lambda_n^{(1)}} \frac{(-1)^{n-\ell(\lambda)}}{m_1!m_2!\dots}\sum_{I\in S^{\rm free}(\lambda)} \oint_{\mathcal C} \frac{dz_{i_{\mu_1}}}{2\I\pi}  \oint_{\mathcal C} \frac{dz_{j_{\mu_2}}}{2\I\pi} \dots  \Res{I} \left\lbrace \Res{z_n=1/z_i} \;\phi_{\vec x}(\vec z)\right\rbrace}_{:=R_{-i}}. 
\label{eq:defRi}
\end{multline}
We have that 
\begin{equation*} \Res{z_n=0} \left\lbrace \phi_{\vec x}(\vec z) \right\rbrace= \phi_{x_1, \dots, x_{n-1}}(z_1, \dots, z_{n-1}). \end{equation*}
Using the bijection between $\Lambda_n^{(1)}$ and $\Lambda_n$, and taking into account that there are $m_1(\lambda)$ ways to place the letter $n$ in a line of length $1$ in $I\in S^{\rm free}(\lambda)$, the term $R_0$ in \eqref{eq:initialconditiontoevaluate} can be rewritten as 
\begin{equation} R_0 = \sum_{\tilde\lambda \in \Lambda_{n-1}} \frac{(-1)^{n-1-\ell(\tilde\lambda)}}{m_1(\tilde\lambda)!m_2(\tilde\lambda)!\dots}\sum_{I\in S(\tilde\lambda)} \oint_{\mathcal C} \frac{dz_{i_{\mu_1}}}{2\I\pi}  \oint_{\mathcal C} \frac{dz_{j_{\mu_2}}}{2\I\pi} \dots \; \Res{\sigma(I)} \left\lbrace \phi_{x_1, \dots, x_{n-1}}(z_1, \dots, z_{n-1})\right\rbrace,
		\label{eq:initialevaluated}
\end{equation}
where the partition $\tilde\lambda$ in \eqref{eq:initialevaluated} correspond to the partition $\lambda$ in \eqref{eq:initialconditiontoevaluate} with the last part removed (so that $\ell(\tilde\lambda)=\ell(\lambda)-1$), and the diagram $\sigma(I)$ is obtained from the diagram $I$ by removing the letter $n$. There are $m_1(\lambda)$ diagrams $I$ corresponding to a diagram $\sigma(I)$, all giving the same contribution, and this is consistent with the fact that $m_1(\lambda)=m_1(\tilde\lambda)+1$ so that  $m_1(\lambda)! = m_1(\lambda) \times m_1(\tilde\lambda)!$. Hence, we have shown that 
\begin{equation*} R_0 = v_{n-1}(0;x_1, \dots, x_{n-1}).\end{equation*}
Now we need to explain why all other residues will cancel out. Recall that the quantity that we have to compute, $v_n(0;\vec x)$,  is not exactly given by \eqref{eq:initialconditiontoevaluate} but contains additional terms corresponding to partitions  $\lambda\in \Lambda_n^{(>1)}$ or $\lambda\in \Lambda_n^{1)}$ and $I\in S^{\rm linked}(\lambda)$.  
When $\lambda\in \Lambda_n^{(>1)}$, $S^{\rm free}(\lambda)=\emptyset$, so that we may write that 
\begin{equation}
	v_n(0;\vec x) -\eqref{eq:initialconditiontoevaluate}  = \sum_{\lambda \in \Lambda_n} \frac{(-1)^{n-\ell(\lambda)}}{m_1!m_2!\dots}\sum_{I\in S^{\rm linked}(\lambda)} \oint_{\mathcal C} \frac{dz_{i_{\mu_1}}}{2\I\pi}  \oint_{\mathcal C} \frac{dz_{j_{\mu_2}}}{2\I\pi} \dots \; \Res{I} \left\lbrace \oint_{\mathcal C}\frac{dz_n}{2\I\pi} \phi_{\vec x}(\vec z)\right\rbrace.
	\label{eq:remainingresidues}
\end{equation} 
We will show that all terms in the right-hand-side of \eqref{eq:remainingresidues} are exactly cancelled by terms in the sum $\sum_{i=1}^n R_i+R_{-i}$. In order to prove that,  we will match all non trivial terms in $\sum_{i=1}^n R_i+R_{-i}$ indexed by $\lambda\in \Lambda_n^{(1)}$ and $I\in S^{\rm free}(\lambda)$ to some term indexed by $\tilde \lambda \in \Lambda_n, \tilde I \in S^{\rm linked}(\lambda)$ in \eqref{eq:remainingresidues}. The matching is quite natural but will require some notation to be formalized. Let us first explain the idea. 
In the terms $R_i$ and $R_{-i}$ in \eqref{eq:defRi}, we are summing over  diagrams $I$ where the number $n$ is alone on its line, i.e. no residue is taken in the variable $z_n$, and we take extra residues at $z_n=qz_i$ or $z_n=1/z_i$. All those residues should match with residues associated to the diagrams in \eqref{eq:remainingresidues} where now the number $n$  is not alone, i.e. a residue is taken at $z_n=qz_i$ or $z_n=1/z_i$ for some $i$. To prove this, we need to argue that terms in \eqref{eq:defRi} and terms in \eqref{eq:remainingresidues} can be matched bijectively.

It will be convenient to write the terms $R_i$ and $R_{-i}$ in \eqref{eq:defRi} as 
\begin{equation*}
	R_i = \sum_{\lambda\in\Lambda_n^{(1)}}\sum_{I\in S^{\rm free}(\lambda)} R_i(\lambda,I), \;\;\;	R_{-i} = \sum_{\lambda\in\Lambda_n^{(1)}}\sum_{I\in S^{\rm free}(\lambda)} R_{-i}(\lambda,I)
\end{equation*}
and to write \eqref{eq:remainingresidues} as 
\begin{equation*}
	\eqref{eq:remainingresidues} =\sum_{\lambda \in \Lambda_n} \sum_{I\in S^{\rm linked}(\lambda)} T(\lambda,I). 
\end{equation*}
Let us first fix some $i$, $\lambda\in\Lambda_n^{(1)}$ and $I\in S^{\rm free}(\lambda)$, and consider the term $R_i(\lambda, I)$, that is the term  indexed by $\lambda$ and $I$ in the sum of residues defining $R_i$ in \eqref{eq:defRi}. If there is an arrow bearing a plus sign leading to $i$ in the diagram $I$, that is, if the diagram $I$ contains a snippet of the form $j\rightarrowplus i$ or $i\leftarrowplus j$ for some $j$, then  $R_i(\lambda, I)=0$, because it involves the residue at $z_n=qz_i$ and $z_j=qz_i$, which vanishes due to the factor $z_j-z_n$ in $\phi_{\vec x}(\vec z)$. Thus, the term $R_i(\lambda,I)$ vanishes, unless the number $i$ appears in $I$ at the beginning or the end of a line. We will denote $B_i$ the set of diagrams such that $i$ appears at the beginning of a line and $E_i$ the set of diagrams such that $i$ appears at the end of a line. We have just shown that 
\begin{equation}
	\sum_{\lambda\in\Lambda_n^{(1)}}\sum_{I\in S^{\rm free}(\lambda), I\not\in B_i, I\not\in E_i} R_i(\lambda,I) = 0.
	\label{eq:firstvanishing}
\end{equation}
Consider now $\lambda\in \Lambda_n^{(1)}$, and $I\in S^{\rm free}(\lambda)\cap(B_i\cup E_i)$, that is a diagram $I$ such that the number $i$ appears at the beginning or the end of a line. We may associate to such $(\lambda,I)$ a new diagram $\tilde I$ (see Example \ref{ex:tildeI} below) obtained from $I$ by removing the line containing the number $n$ (recall that $I\in S^{\rm linked}(\lambda)$, so that there is a line containing the number $n$ and no arrow) and adding the snippet  $n\rightarrowplus i$ or $i\leftarrowplus n$ to the line containing $i$.
 We define $\tilde\lambda$ as the partition of $n$ corresponding to the shape of the diagram $\tilde I$. 
 \begin{example}
Consider the partition $\lambda=(3,2,1)$, and the diagram 
	\begin{eqnarray*}
		&&	5 \rightarrowplus 1\leftarrowminus 3\\
	I=	&&	2\leftarrowminus 4\\
		&&	6.
	\end{eqnarray*}
If $i=2$, the diagram $\tilde I$ is obtained by removing the third line and adding the snippet $6\rightarrowplus 2$ to the beginning of the second line, that is 
	\begin{eqnarray*}
\tilde I=	&&	 5 \rightarrowplus 1\leftarrowminus 3\\
	&&	6 \rightarrowplus 2\leftarrowminus 4.
\end{eqnarray*}
so that $\tilde\lambda=(3,3)$. 
\label{ex:tildeI}
 \end{example}
Notice that in general, $\ell(\lambda)=\ell(\tilde\lambda)+1$, so that comparing \eqref{eq:remainingresidues} and \eqref{eq:defRi}, we have 
\begin{equation}
	R_i(\lambda,I) = - T(\tilde\lambda,\tilde I). 
	\label{eq:equalityterm1}
\end{equation}

Let us now consider the term $R_{-i}(\lambda, I)$, that is the term indexed by $\lambda$ and $I$ in the sum of residues defining $R_{-i}$  in \eqref{eq:defRi}. If the number $i$ in $I$ is already connected to an arrow bearing a minus sign, that is, if $I$ contains $i\leftarrowminus j$ for some $j$, the associated residue will vanish for the same reason as above. Furthermore, if the line containing $i$ includes a snippet of the form $j\leftarrowplus i$ or $i\rightarrowplus j$, the associated residue will vanish due to the factor $(1-qz_nz_i)$ in $\phi_{\vec x}(\vec z)$.  Otherwise, the line containing $i$ must be of the form $A\rightarrowplus i$ for some possibly empty snippet $A$, that is $i$ must occur at the end of a line and is connected to an arrow bearing a plus sign. We denote by  $E_i^+$ the set of such diagrams.    We have just shown that 
\begin{equation}
	\sum_{\lambda\in\Lambda_n^{(1)}}\sum_{I\in S^{\rm free}(\lambda),  I\not\in E_i^+} R_{-i}(\lambda,I) = 0.
	\label{eq:secondvanishing}
\end{equation}
Assuming $\lambda$ and $I$ are such that $i$ occurs at the end of a line and is connected to an arrow bearing a plus sign, we associate to $(\lambda,I)$ new partitions and diagrams $(\tilde\lambda, \tilde I)$ where  the new diagram $\tilde I$ is obtained from $I$ by  removing the line containing the number $n$ alone, and adding the snippet $i\leftarrowminus n$ to the line containing $i$. The partition $\tilde \lambda$ is the shape of $\tilde I$ as above. Again, 
$\ell(\lambda)=\ell(\tilde\lambda)+1$, so that comparing \eqref{eq:remainingresidues} and \eqref{eq:defRi}, we have 
\begin{equation}
	R_{-i}(\lambda,I) = - T(\tilde\lambda,\tilde I). 
	\label{eq:equalityterms2}
\end{equation}
From \eqref{eq:equalityterm1} and \eqref{eq:equalityterms2}, we deduce
\begin{equation}
\sum_{i=1}^{n-1} \left(\sum_{\lambda\in\Lambda_n^{(1)}}\sum_{I\in S^{\rm free}(\lambda)\cap(B_i\cup E_i)} R_i(\lambda,I) +  \sum_{\lambda\in\Lambda_n^{(1)}}\sum_{I\in S^{\rm free}(\lambda)\cap E_i^+} R_{-i}(\lambda,I)  \right)
=   -\sum_{\tilde\lambda \in \Lambda_n} \sum_{\tilde I\in S^{\rm linked}(\tilde\lambda)} T(\tilde\lambda,\tilde I).
\label{eq:matchingterms}
\end{equation}
In \eqref{eq:matchingterms}, we have use implicitly that the map $(\lambda,I)\to (\tilde\lambda, \tilde I)$ that matches terms on both sides is a bijection. To check this, it is enough to exhibit the inverse map. For $\tilde \lambda\in \Lambda_n$ and a diagram $\tilde I\in S^{\rm linked}(\tilde\lambda)$, one needs to look at which number $i<n$ the number $n$ is connected by an arrow. Such a number $i$ must exist because $\tilde I\in S^{\rm linked}(\lambda)$. If $i$ and $n$ are connected in $\tilde I$ by an arrow bearing a plus sign,  the term  $T(\tilde\lambda,\tilde I)$ is matched with $-R_i(\lambda, I)$, where the diagram $I$ is obtained from $\tilde I$ by removing the arrow $n\rightarrowplus i$ or $i\leftarrowplus n$ and placing $n$ on an extra line alone (it is easy to check that $I\in B_i\cup E_i$, i.e. $i$ occurs in the beginning or end of a line in $I$). If $i$ and $n$ are connected in $\tilde I$ by an arrow bearing a minus sign, the term  $T(\tilde\lambda,\tilde I)$ is matched with $-R_{-i}(\lambda, I)$, where the diagram $I$ is obtained from $I$ by removing the arrow $i\leftarrowminus n$ and adding the number $n$ to an extra line with no arrow (again, it can be easily checked that $I\in E_i^+$, i.e. the number $i$ occurs in $I$ at the end of a line and is connected to an arrow bearing a plus sign).

Thus, combining \eqref{eq:firstvanishing},  \eqref{eq:secondvanishing} and \eqref{eq:matchingterms},  we have shown that 
\begin{equation*}-\sum_{i=1}^{n-1}( R_i+R_{-i}) = \sum_{\tilde\lambda \in \Lambda_n} \sum_{\tilde I\in S^{\rm linked}(\tilde\lambda)} T(\tilde\lambda,\tilde I) =  \eqref{eq:remainingresidues},
\end{equation*}
so that 
\begin{equation*}v_n(0;\vec x) = \eqref{eq:initialconditiontoevaluate}+\eqref{eq:remainingresidues} =  R_0=v_{n-1}(0;x_1, \dots, x_{n-1}).\end{equation*}
	Hence, we have proved \eqref{eq:recurrencetoshow} and this concludes the proof of Condition (5). 
%
	
	\subsubsection{Conclusion}
	We have shown that the function $v_n(t;\vec x)$ defined in \eqref{eq:deffunctionv} satisfies all the conditions in Proposition \ref{prop:freeevolutionu}. By Proposition \ref{prop:freeevolutionu}, we conclude that \eqref{eq:mixedmomentsQ} holds for any $\vec x\in \Weyl{n}_{\geq 1}$. When $n=1$, the formula reduces to \eqref{eq:firstmoment}, and when $n=2$, the formula reduces to 
	\begin{equation*}
		\EE\left[q^{N_{x_1}(t)+N_{x_2}(t)}\right] = \oint_{\mathcal C} \frac{dz_1}{2\I\pi}\oint_{\mathcal C} \frac{dz_2}{2\I\pi} \phi_{\vec x}(z_1,z_2) - \oint_{\mathcal C} \frac{dz_1}{2\I\pi} \Res{z_2=qz_1} \left\lbrace \phi_{\vec x}(z_1,z_2) \right\rbrace - \oint_{\mathcal C} \frac{dz_1}{2\I\pi} \Res{z_2=1/z_1} \left\lbrace \phi_{\vec x}(z_1,z_2) \right\rbrace. 
	\end{equation*}
Explicitly evaluating  the residues yields \eqref{eq:explicitsecondmoment}. This concludes the proof of Theorem \ref{theo:momentformula}.

\section{Duality for open ASEP on a segment} 
\label{sec:segment}
The arguments in Section \ref{sec:dualityhalfline} and \ref{sec:ODE} can be adapted to the case of open ASEP on a segment with two boundaries. Although we cannot solve explicitly the system of ODEs in this case, we explain in this section how to obtain a closed system of ODEs for observables of the integrated current in open ASEP. 

\begin{definition} Let $\llbracket a,b\rrbracket$ denote the set of integers from $a$ to $b$ included. 
	We define the open ASEP on a segment as a Markov process on the state-space $\lbrace 0,1\rbrace^{\llbracket 1, \ell-1\rrbracket}\times \Z_{\geq 0}$ describing the dynamics of particles between two reservoirs, according to the jump rates depicted in Figure \ref{fig:openASEP}. 
	Configurations are described by occupation variables $\eta=(\eta_1, \dots, \eta_{\ell-1})\in \lbrace 0,1\rbrace^{\llbracket 1, \ell-1\rrbracket}$ together with  the number $N=N_\ell\in \Z$ of particles that have gone through the system (see below for details). For any $x\in \llbracket 1, \ell-1\rrbracket$, a particle jumps from site $x$ to $x+1$ or  from site $x+1$ to $x$ at the same rates $\p$ and $\q$ as in half-line ASEP (or full-line ASEP). As on the half-line, a particle is created or annihilated at the site $1$ at exponential rates
	\begin{equation*} \ratealpha\ (1-\eta_1) \ \  \text{and}\ \  \rategamma\ \eta_1.\end{equation*}
	Further, a particle is created or annihilated at the site $\ell-1$ at exponential rates
	\begin{equation*} \ratedelta\ (1-\eta_1) \ \  \text{and}\ \  \ratebeta\ \eta_1.\end{equation*}
	All these events are independent.  We define the integrated current at site $x$ by
	\begin{equation*} N_x(\eta, N_\ell) = \sum_{i=x}^{\ell-1} \eta_i+ N_\ell,\end{equation*}
	where $N_\ell(t)$ denotes the number of particles that have been annihilated at site $\ell-1$, minus the number of particles that have been created at site $\ell-1$ (this corresponds to the total number of particles that have traveled through the system). 
	We denote by $\Letasegment$ the generator associated to this Markov process, that is the operator acting on functions $f: \lbrace 0,1\rbrace^{\llbracket 1,\ell-1\rrbracket}\times \Z \to \R$ by 
	\begin{multline}
	\Letasegment f(\eta,N)  =  \ratealpha (1-\eta_1) \left( f(\eta_1^+,N)-f(\eta,N)\right) + \rategamma \eta_1 \left( f(\eta_1^-,N)-f(\eta,N)\right) \\ 
	+\ratedelta (1-\eta_{\ell-1}) \left( f(\eta_{\ell-1}^+,N-1)-f(\eta,N)\right) + \ratebeta \eta_{\ell-1} \left( f(\eta_{\ell-1}^-,N+1)-f(\eta,N)\right) \\
	+ \sum_{x=1}^{\ell-2} \left(  \p\eta_x(1-\eta_{x+1}) +  \q\eta_{x+1}(1-\eta_{x}) \right) \left( f(\eta^{x,x+1}, N)-f(\eta,N)\right), 
	\label{eq:defLetasegment}
	\end{multline}
	where for $i=1$ or $i=\ell-1$,  $\eta_i^+$ (resp. $\eta_i^-$) is obtained from $\eta$ by setting $\eta_i=1$ (resp. $\eta_i=0$).    
\end{definition} 
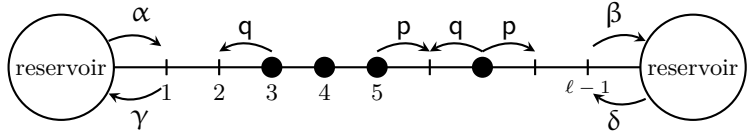
\begin{figure}
	\begin{center} 
		\vspace{-0.3cm}
		\begin{tikzpicture}[xscale=0.7, yscale=0.7, every text node part/.style={align=center}]
		\draw[thick] (0,0) -- (10,0);
		\foreach \x in {1,...,9}
		\draw[thick] (\x,-0.15)--(\x,0.15);
		\foreach \x in {1, ..., 5} {
		\draw (\x, 0.15) -- (\x, -0.15) node[anchor=north]{\footnotesize $\x$};}
		\draw (9, 0.1) -- (9, -0.05) node[anchor=north]{\tiny $\ell-1$};
		\draw[thick] (-1,0) circle(1);
		\draw[thick] (11,0) circle(1);
		\fill (3,0) circle(0.2);
		\fill (4,0) circle(0.2);
		\fill (5,0) circle(0.2);
		\fill (7,0) circle(0.2);
		\draw[-stealth, thick] (3,0.3) to[bend right] (2,0.3);
		\draw[-stealth, thick] (5,0.3) to[bend left] (6,0.3);
		\draw[-stealth, thick] (7,0.3) to[bend right] (6,0.3);
		\draw[-stealth, thick] (7,0.3) to[bend left] (8,0.3);
		\draw (2.5,0.7) node{$\q$};
		\draw (5.5,0.7) node{$\p$};
		\draw (6.5,0.7) node{$\q$};
		\draw (7.5,0.7) node{$\p$};
		\draw (-1,0) node{\footnotesize reservoir};
		\draw (11,0) node{\footnotesize reservoir};
		\draw[-stealth, thick] (-0.1,0.5) to[bend left] (0.9,0.4);
		\draw[stealth-, thick] (-0.1,-0.5) to[bend right] (0.9,-0.4);
		\draw[stealth-, thick] (10.1,0.5) to[bend right] (9.1,0.4);
		\draw[-stealth, thick] (10.1,-0.6) to[bend left] (9.1,-0.5);
		\draw (0.5,1) node{$\ratealpha$};
		\draw (0.5,-1) node{$\rategamma$};
		\draw (9.5,1) node{$\ratebeta$};
		\draw (9.5,-1) node{$\ratedelta$};
		\end{tikzpicture}
		\vspace{-0.3cm}
	\end{center}
	\caption{Jump rates of Open ASEP on $\ell-1$ sites. 
		\label{fig:openASEP}}
\end{figure}
 We now assume that the boundary parameters $\ratealpha, \rategamma, \ratebeta, \ratedelta$ satisfy Liggett's condition \eqref{eq:Liggettcondition} on both sides, that is we assume that 
\begin{equation}
\frac{\ratealpha}{\p}+\frac{\rategamma}{\q} = 1, \;\;\; \frac{\ratebeta}{\p}+\frac{\ratedelta}{\q} = 1,
\label{eq:Liggettcondition2}
\end{equation}
and set $\rho_0=\ratealpha/\p$ and $\rho_\ell=\ratedelta/\q$.
We will use the notation 
\begin{equation*} \Weyl{n}_{\llbracket a, b\rrbracket} :=\left\lbrace \vec x\in \Z^n : a\leq x_1<\dots < x_n\leq b \right\rbrace.\end{equation*}

In order to state a duality, let us define the operator 
$\Lpart^{(n, \rho_0, \rho_\ell)}$ acting on functions $f:\Weyl{n}_{\llbracket 1,\ell\rrbracket}\to \R$ by  
\begin{multline}
	\Lpart^{(n, \rho_0, \rho_\ell)} f(\vec x)  = \sum_{\underset{x_i-x_{i-1}>1}{2\leqslant i\leqslant n} } \p\left( f(\vec x_i^-) -f(\vec x)\right) + \sum_{   \underset{x_{i+1}-x_{i}>1}{1\leqslant i\leqslant n-1}} \q\left( f(\vec x_i^+) -f(\vec x)\right)\\ 
	+\mathds{1}_{x_1>1}\p\left( f(\vec x_1^-) -f(\vec x)\right) -\mathds{1}_{x_1=1} (\p-\q)\rho_0 f(\vec x)\\ 
	+\mathds{1}_{x_n<\ell}\q\left( f(\vec x_n^+) -f(\vec x)\right) +\mathds{1}_{x_n=\ell} (\p-\q)\rho_\ell f(\vec x).
	\label{eq:defLpartsegment}
\end{multline}

 This operator characterizes the time evolution of transition probabilities for a  system of $n$ particles on $\Weyl{n}_{\llbracket 1,\ell\rrbracket}$ performing continuous time simple random walks with jump rates $\p$ and $\q$, under the exclusion constraint, with boundary conditions at $x_1=1$ and $x_n=\ell$ of Robin type (the term ``Robin'' comes from the form of the boundary condition \eqref{eq:boundaryconditionforu} in Proposition \ref{prop:freeevolutionu} below, which is a discrete analogue of the Robin type boundary condition in PDE theory). The same boundary conditions occur in transition probabilities for continuous time random walks with elastic reflections at the boundaries \cite[Sec. 4.1]{corwin2016open}.  
 Hence we will call $\Lpart^{(n, \rho_0, \rho_\ell)}$ the generator of the \emph{mixed Robin boundary $n$-particle ASEP on a segment}, even though it is not the generator of a Markov process.

\begin{theorem} Fix $n\geqslant 1$ and assume that \eqref{eq:Liggettcondition2} holds. 
	The generator \eqref{eq:defLetasegment} of open ASEP on a segment with right and left jump rates $\p$ and $\q$ and the operator $\Lpart^{(n, \rho_0, \rho_\ell)}$, generator of the mixed Robin boundary $n$-particle ASEP on a segment with right and left jump rates $\q$ and $\p$, are dual with respect to 
	\begin{equation}
		H(\eta,N_\ell;\vec x) := \prod_{i=1}^n Q_{x_i}(\eta, N_\ell) \;\;\;\text{ where }\;\;Q_x(\eta, N_\ell)=q^{N_x(\eta, N_\ell)}.
		\label{eq:newdefH}
	\end{equation}
	Equivalently,  for any $(\eta,N_\ell)\in \lbrace 0,1\rbrace^{\llbracket 1,\ell-1\rrbracket}\times \Z$ and $\vec x\in \Weyl{n}_{\llbracket 1,\ell\rrbracket}$, 
	\begin{equation*}
		\Letasegment H(\eta, N_\ell;  \vec x) = \Lpart^{(n, \rho_0, \rho_\ell)} H(\eta, N_\ell; \vec x),
		\label{eq:dualitysegment}
	\end{equation*}
where we recall that $\Letasegment$ acts on functions of $(\eta,N_\ell)$ and $\Lpart^{(n, \rho_0, \rho_\ell)}$ acts on functions of $\vec x$. 
	\label{theo:dualitysegment}
\end{theorem}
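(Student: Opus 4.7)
The plan is to mimic the proof of Theorem \ref{theo:dualityhalf-space}, applying Liggett's trick at both boundaries. I would introduce two independent fictitious Bernoulli variables $\eta_0\sim\mathrm{Bern}(\rho_0)$ and $\eta_\ell\sim\mathrm{Bern}(\rho_\ell)$. The left boundary of $\Letasegment$ at site $1$ is reinterpreted as a bulk ASEP swap between a fictitious site $0$ and site $1$, using the left Liggett condition. The right boundary is analogous but has an important twist: each bulk swap between site $\ell-1$ and the fictitious site $\ell$ must simultaneously update the current counter $N_\ell$, incrementing it by $+1$ on an ejection ($\eta_{\ell-1}=1$, $\eta_\ell=0$, rate $\p$) and by $-1$ on an injection ($\eta_{\ell-1}=0$, $\eta_\ell=1$, rate $\q$). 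Averaging these combined swaps over $\eta_\ell$, using $\ratebeta=\p(1-\rho_\ell)$ and $\ratedelta=\q\rho_\ell$, reproduces the segment's right-boundary dynamics.

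I would encode this in an extended generator $\Lzerozero$ acting on $\{0,1\}^{\llbracket 0,\ell\rrbracket}\times\Z$, consisting of ordinary ASEP bulk swaps at edges $(x,x+1)$ for $x=0,\ldots,\ell-2$ together with the modified $(\ell-1,\ell)$ swap above. A direct term-by-term comparison, analogous to \eqref{eq:Liggetttrick}, gives $\Letasegment f=E[\Lzerozero f]$ for any $f$ independent of $\eta_0,\eta_\ell$, where $E$ denotes expectation over the two fictitious variables. Since $H(\eta,N_\ell;\vec x)$ depends only on $\eta_1,\ldots,\eta_{\ell-1}$ and $N_\ell$, we obtain $\Letasegment H=E[\Lzerozero H]$.

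It then remains to verify $E[\Lzerozero H]=\Lpart^{(n,\rho_0,\rho_\ell)}H$ by computing $\Lzerozero H$ term by term. For each bulk swap $(x,x+1)$ with $x=1,\ldots,\ell-2$, the change in $H$ involves only $Q_{x+1}$ (when $x+1$ equals some $x_i$) and matches the corresponding terms of the full-line duality of Proposition \ref{prop:dualityfull-space}. The swap at $(0,1)$, averaged over $\eta_0$ using $E[q^{\eta_0}]=\rho_0 q+1-\rho_0$, reproduces the left-boundary killing $-\mathds{1}_{x_1=1}(\p-\q)\rho_0 H$ exactly as in Theorem \ref{theo:dualityhalf-space}. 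For the modified $(\ell-1,\ell)$ swap, the key observation is that the simultaneous updates of $\eta_{\ell-1}$ and $N_\ell$ cancel in every $N_{x_i}$ with $x_i\leq\ell-1$, so only $Q_\ell=q^{N_\ell}$ can change; averaging over $\eta_\ell$ yields $\mathds{1}_{x_n=\ell}\bigl[\ratebeta\eta_{\ell-1}(q-1)+\ratedelta(1-\eta_{\ell-1})(q^{-1}-1)\bigr]H$, which, using $\q=q\p$, rewrites as $\p(q^{\eta_{\ell-1}}-1)H+(\p-\q)\rho_\ell H$. Combining this with the bulk contribution at $(\ell-2,\ell-1)$ and keeping track of the boundary indicators and the convention $x_{n+1}=+\infty$ in $\Lpart^{(n,\rho_0,\rho_\ell)}$ completes the match.

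The main obstacle, relative to the half-line setting, is the delicate interplay between the fictitious Bernoulli variable $\eta_\ell$ and the dynamically updated integer $N_\ell$: one must verify that the $N_\ell$-update in $\Lzerozero$ chosen to match the segment dynamics also produces, after averaging, precisely the asymmetric right-boundary boost $+\mathds{1}_{x_n=\ell}(\p-\q)\rho_\ell H$ in $\Lpart^{(n,\rho_0,\rho_\ell)}$. The sign flip compared to the left-boundary killing $-\rho_0(\p-\q)$ is a direct consequence of this coupling, reflecting that $N_x$ is a right-to-left sum of occupation variables, so that the roles of creation and annihilation are swapped at the two ends.
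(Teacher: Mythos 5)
Your proposal is correct and follows essentially the same route as the paper: two fictitious Bernoulli sites at $0$ and $\ell$ under Liggett's condition, the extended reflecting generator $\Lzerozero$ with the counter $N_\ell$ updated at the $(\ell-1,\ell)$ bond, the identity $\Letasegment f = E[\Lzerozero f]$, and then the full-space duality of Proposition \ref{prop:dualityfull-space} together with averaging over $\eta_0,\eta_\ell$ to produce the $-\rho_0(\p-\q)$ and $+\rho_\ell(\p-\q)$ boundary terms. The only (cosmetic) difference is that you verify the right-boundary contribution by acting directly on $H$ and averaging the rates via $E[\p(1-\eta_\ell)]=\ratebeta$, $E[\q\eta_\ell]=\ratedelta$, whereas the paper observes that $\Lzerozero H=\Letafullspace H$ and then replaces $Q_{\ell+1}=q^{N_\ell-\eta_\ell}$ by $(\rho_\ell/q+1-\rho_\ell)Q_\ell$ inside $\Lpart^{(n)}H$; the two computations are equivalent.
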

\begin{remark}
In the case of the symmetric simple exclusion process on a segment, a similar Markov duality was proved in  \cite[Th. 4.5]{giardina2009duality} (see also \cite[Section 5]{spohn1983long} and \cite{schutz1994non}) with respect to another duality functional. Markov duality for the symmetric simple exclusion process with arbitrary open boundaries is also discussed in  \cite{ohkubo2017dualities}. 

More generally, for different models, Markov dualities between open particle systems with symmetric jump rates and systems with absorbing sites have been previously obtained in the literature \cite{giardina2009duality, giardina2007duality, carinci2013duality, floreani2022orthogonal}. 

In the case of open ASEP, a self-duality is proved in \cite{kuan2021algebraic} for open ASEP with parameters $\alpha,\gamma$ such that $\alpha/\gamma=\p/\q$ and $\beta=\delta=0$. The duality functional is written in algebraic terms and is different from that of Theorem \ref{theo:dualitysegment} (in particular, it depends on boundary parameters through the ratio $\alpha/\gamma$). Another self-duality for open ASEP is also proved in \cite{kuan2022two} for generic $\alpha, \gamma$ but $\beta=\gamma=0$ and yet another duality functional.   
Finally, a reverse Markov duality is proved in  \cite{schutz2022reverse} between the open ASEP on a segment, for parameters $\alpha, \beta, \gamma, \delta$ belonging to a certain manifold, and a closed boundary ASEP on a segment with particle-dependent jump rates. This allows to obtain information on the precise structure of stationary measures with shocks that arise on this specific manifold of parameters. 
\label{rem:dualityopenasep}
\end{remark}

\begin{proof} 
 As in Section \ref{sec:dualityhalfline}, Liggett's condition \eqref{eq:Liggettcondition2} allows to rewrite the generator  $\Letasegment$ as the expectation of a simpler generator on $\ell+1$ sites. 
 More precisely, let us first define the operator 
\begin{multline}
\Lzerozero f(\eta, N)  =  \sum_{x=0}^{\ell-2} \left(  \p\eta_x(1-\eta_{x+1}) +  \q\eta_{x+1}(1-\eta_{x}) \right) \left( f(\eta^{x,x+1}, N)-f(\eta, N)\right), \\ 
+ \p\eta_{\ell-1}(1-\eta_{\ell}) \left( f(\eta^{\ell-1,\ell}, N+1)-f(\eta, N)\right) +  \q\eta_{\ell}(1-\eta_{\ell-1})\left( f(\eta^{\ell-1,\ell}, N-1)-f(\eta, N)\right),
\label{eq:defLeta00}
\end{multline}
acting on functions $f:\lbrace 0,1\rbrace^{\llbracket 0,\ell\rrbracket}\times \mathbb Z_{\geq 0}\to \R$,  that is the generator of half-line ASEP on $\llbracket 0,\ell\rrbracket$  with no injection nor ejection of particles (i.e. with reflecting boundaries), where the integer $N$ keeps track of the number of particles that have traveled between sites $\ell-1$ and $\ell$. Then,  consider a function $f:\lbrace 0,1\rbrace^{\llbracket 1,\ell-1\rrbracket}\times \mathbb Z_{\geq 0}\to \R$ defined on $\ell-1$ occupation variables. It can be also viewed as a function $\lbrace 0,1\rbrace^{\llbracket 0,\ell\rrbracket}\times \mathbb Z_{\geq 0}\to \R$ which does not depend on the variables $\eta_0$ and $\eta_{\ell}$, so that we can apply the operator $\Lzerozero$. Hence,  the operator $\Lzerozero$ maps functions $\lbrace 0,1\rbrace^{\llbracket 1,\ell-1\rrbracket}\times \mathbb Z_{\geq 0} \to  \R$ to functions  $\lbrace 0,1\rbrace^{\llbracket 0,\ell\rrbracket}\times \mathbb Z_{\geq 0}\to \R$. 
 We claim that for $f:\lbrace 0,1\rbrace^{\llbracket 1,\ell-1\rrbracket}\times \mathbb Z_{\geq 0}\to \R$,  
\begin{equation}
\Letasegment f(\eta, N)=E\left[\Lzerozero f (\eta, N)\right], 
\label{eq:Liggetttricksegment}
\end{equation}
where $E$ denotes the expectation with respect to $\eta_0$ and $\eta_\ell$, where $\eta_0=1$ with probability $\rho_0$, and $\eta_\ell=1$ with probability $\rho_\ell$. As in the proof of Theorem \ref{theo:dualityhalf-space}, this is due to the fact that under \eqref{eq:Liggettcondition2}, $E[\p\eta_0]=\ratealpha$, $E[\q(1-\eta_0)]=\rategamma$, $E[\p(1-\eta_\ell)] =\ratebeta$ and $E[\q \eta_\ell]=\ratedelta$.

As we have already noted, the function ${H}(\eta,\vec x)$ was  originally   defined for $\eta\in \lbrace 0,1\rbrace^{\mathbb Z}$ in \eqref{eq:defH}, but when $\vec x\in \Weyl{n}_{\llbracket 1,\ell\rrbracket }$, it depends only on  $(\eta_1, \dots, \eta_{\ell-1}, N_\ell) \in \lbrace 0,1 \rbrace^{\llbracket 1,\ell-1\rrbracket}\times \Z_{\geq 0}$.  Indeed, the value of $H(\eta, \vec x)$ does not depend on the $\eta_i$ for $i\leq 0$ and it depends only on the $\eta_i$ for $i\geq \ell$ through $N_\ell$. 

Moreover, for $\vec x\in \Weyl{n}_{\llbracket 1,\ell\rrbracket }$, when computing $\Letafullspace {H}(\eta,\vec x)$, the summation over all $x\in \mathbb Z$ in \eqref{eq:defgeneratorfullspace} can be restricted to a summation over $x\in \llbracket 0,\ell-1\rrbracket$. 
This implies that for any $\vec x\in \Weyl{n}_{\llbracket 1,\ell\rrbracket }$ and $(\eta, N)\in \lbrace 0,1\rbrace^{\llbracket 1,\ell-1\rrbracket}\times \Z_{\geq 0}$, we have  
\begin{equation}
	\Letafullspace {H}(\tilde\eta,\vec x)=\Lzerozero {H}(\eta,N; \vec x)
	\label{eq:restricted}
\end{equation}
where on the right-hand-side, we recall that $\Lzerozero {H}(\eta,N; \vec x)$ is a function  $\lbrace 0,1\rbrace^{\llbracket 0,\ell\rrbracket}\times \Z_{\geq 0} \to \R$, and on the left-hand-side, $\tilde\eta$ is any  configuration on $\lbrace 0,1\rbrace^{\Z}$ which projects to $(\eta, N)\in \lbrace 0,1\rbrace^{\llbracket 0,\ell\rrbracket}\times \Z_{\geq 0}$ in the sense that  $\tilde\eta_x= \eta_x$ for all $0\leq x\leq \ell$ and the variables $\tilde\eta_y$ for $y>\ell$ are such that $N=\tilde\eta_{\ell}+ N_{\ell+1}(\tilde \eta)$.

Thus, combining \eqref{eq:Liggetttricksegment} and \eqref{eq:restricted}, for all $\vec x\in \Weyl{n}_{\llbracket 1,\ell\rrbracket }$ and $(\eta, N)\in \lbrace 0,1\rbrace^{\llbracket 1,\ell-1\rrbracket}\times \Z_{\geq 0}$, 
\begin{equation}
\Letasegment H(\eta, N; \vec x)=E\left[\Lzerozero H(\eta, N;\vec x)\right]= E\left[\Letafullspace H(\tilde\eta; \vec x)\right],
\label{eq:Liggetttrickapplied2}
\end{equation}
where, again,  $\tilde \eta \in \lbrace 0,1\rbrace^{\mathbb Z}$ is any configuration that projects to  $(\eta, N)\in \lbrace 0,1\rbrace^{\llbracket 0,\ell\rrbracket}\times \Z_{\geq 0}$, and the expectation $E$ simply means that we average over the Bernoulli variables $\eta_0,\eta_\ell$ in the middle, and we average over $\tilde\eta_0, \tilde\eta_\ell$ in the right-hand-side. Now, using Proposition \ref{prop:dualityfull-space}, 
\begin{equation}
\Letasegment H(\eta, N;\vec x)= E\left[\Lpart^{(n)} H(\tilde \eta; \vec x)\right].
\label{eq:Liggetttrickdualitysegment}
\end{equation}
As in the proof of Theorem \ref{theo:dualityhalf-space}, $\Lpart^{(n)} H(\tilde\eta,\vec x)$ is a polynomial in the $Q_0(\tilde\eta), Q_1(\tilde\eta), \dots, Q_{\ell+1}(\tilde\eta)$ which is linear in $Q_0(\tilde\eta)$ and $Q_{\ell+1}(\tilde\eta)$.  Since $Q_0(\tilde\eta)=q^{\tilde\eta_0}Q_1(\tilde\eta)$ and $Q_{\ell+1}(\tilde\eta)=q^{N_{\ell+1}(\tilde\eta)}=q^{N-\tilde\eta_{\ell}}$ the expectation $E$ has the effect of replacing  $Q_0$ by  $(\rho_0 q+1-\rho_0)Q_1$ and replacing $Q_{\ell+1}$ by  $(\rho_\ell/q+1-\rho_\ell)Q_\ell$. 
Since
	\begin{equation*} \p\Big((\rho_0 q+1-\rho_0)-1\Big)=\rho_0(\q-\p),\;\;\; \q\Big((\rho_\ell/q+1-\rho_\ell)-1\Big) = \rho_\ell(\p-\q),
	\end{equation*}
we obtain that $E\left[\Lpart^{(n)} H(\tilde \eta; \vec x)\right] =\Lpart^{(n, \rho_0, \rho_\ell)} H(\tilde \eta; \vec x)$, and since $\tilde\eta$ projects to $(\eta,N)$, the function $\Lpart^{(n, \rho_0, \rho_\ell)} H(\tilde \eta; \vec x)$ can be written as $\Lpart^{(n, \rho_0, \rho_\ell)} H( \eta, N; \vec x)$ when $\vec x\in \Weyl{n}_{\llbracket 1,\ell\rrbracket}$. 
To summarize, we have obtained that for any $(\eta,N)\in \lbrace 0,1\rbrace^{\llbracket 1,\ell-1\rrbracket}\times \Z$ and $\vec x\in \Weyl{n}_{\llbracket 1,\ell\rrbracket}$, 
\begin{equation*}	\Letasegment H(\eta, N;  \vec x) = \Lpart^{(n, \rho_0, \rho_\ell)} H(\eta, N; \vec x),\end{equation*}
which concludes the proof.
\end{proof}
As in Section \ref{sec:dualityhalfline}, the duality from Theorem \ref{theo:dualitysegment} yields a closed system of ODEs characterizing the function $(t,\vec x)\mapsto \mathbb E[\prod_{i=1}^n Q_{x_i}(\eta(t), N_\ell(t))]$.  
\begin{proposition} Assume that \eqref{eq:Liggettcondition2} holds and fix some initial state $(\eta,N)\in \lbrace 0,1\rbrace^{\llbracket 1,\ell-1\rrbracket}\times \Z_{\geq 0}$. There exists a unique function $ u : \R_{+} \times \Weyl{n}_{\llbracket 1,\ell\rrbracket} \to \R$ which satisfies 
	\begin{enumerate}
		\item For all $\vec x \in \Weyl{n}_{\llbracket 1,\ell\rrbracket}$ and $t\in \R_+$, 
		\begin{equation*} \frac{d}{dt} u(t;\vec x) = \Lpart^{(n, \rho_0, \rho_\ell)}  u(t;\vec x); 
			\label{eq:evolutionsegment}
		\end{equation*}
		\item For any $\vec x\in \Weyl{n}_{\llbracket 1,\ell\rrbracket}$, $u(0; \vec x)=H(\eta,N;\vec x)$.
	\end{enumerate}
	The solution is such that for all  $\vec x \in \Weyl{k}_{\llbracket 1,\ell\rrbracket}$ and $t\in \R_+$, $u(t,\vec x) = \EE^{\eta,N}[H(\eta(t),N_\ell(t);\vec x))]$.
	\label{prop:trueevolutionsegment}
\end{proposition}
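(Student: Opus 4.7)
The plan is to mirror the proof of Proposition \ref{prop:trueevolution}, with several simplifications stemming from the fact that the dual state space is now finite. First I would verify that $u(t;\vec x) := \EE^{\eta,N}[H(\eta(t), N_\ell(t); \vec x)]$ satisfies both conditions. Condition (2) is immediate from the initial state of the process. For condition (1), I would run the standard chain of identities
\begin{align*}
\frac{d}{dt} \EE^{\eta,N}\left[H(\eta(t), N_\ell(t); \vec x)\right]
&= \EE^{\eta,N}\left[\Letasegment H(\eta(t), N_\ell(t); \vec x)\right] \\
&= \EE^{\eta,N}\left[\Lpart^{(n, \rho_0, \rho_\ell)} H(\eta(t), N_\ell(t); \vec x)\right] \\
&= \Lpart^{(n, \rho_0, \rho_\ell)} \EE^{\eta,N}\left[H(\eta(t), N_\ell(t); \vec x)\right],
\end{align*}
where the first equality is the commutation of semigroup and generator, the second equality is Theorem \ref{theo:dualitysegment}, and the third uses linearity of expectation together with the fact that $\Lpart^{(n, \rho_0, \rho_\ell)} H(\cdot, \cdot; \vec x)$ is, for fixed $\vec x$, a finite linear combination of the observables $H(\cdot, \cdot; \vec y)$ evaluated at finitely many neighbors $\vec y$ of $\vec x$.

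For uniqueness, the key observation is that $\Weyl{n}_{\llbracket 1,\ell\rrbracket}$ is a finite set (of cardinality $\binom{\ell}{n}$), so condition (1) amounts to a finite-dimensional linear ODE driven by the bounded operator $\Lpart^{(n, \rho_0, \rho_\ell)}$. Uniqueness with prescribed initial condition (2) then follows from the Picard--Lindel\"of theorem, and in fact the unique solution is given explicitly by $u(t; \cdot) = e^{t \Lpart^{(n, \rho_0, \rho_\ell)}} H(\eta, N; \cdot)$. In particular, no growth hypothesis analogous to \eqref{eq:boundedgrowth} is required, which is a substantial simplification compared with Proposition \ref{prop:trueevolution}; this is why only two conditions appear in the statement rather than four.

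The only point requiring care is the first equality above, namely the justification for differentiating under the expectation and commuting with $\Letasegment$ applied to $H(\cdot, \cdot; \vec x)$. The state space of the primal process $(\eta(t), N_\ell(t))$ is infinite because of the unbounded component $N_\ell \in \Z_{\geq 0}$. However, since $|H(\eta, N_\ell; \vec x)| \leq 1$ uniformly, and since the action of $\Letasegment$ on $H(\cdot, \cdot; \vec x)$ at fixed $\vec x$ is a finite sum of uniformly bounded terms (by inspection of \eqref{eq:defLetasegment}), this step is handled exactly as in \cite[Appendix C]{borodin2012duality} or via the general framework of \cite[Prop. 1.2]{jansen2014}. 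This is the same analytic technicality as in the half-line case and introduces no new difficulty; the real content of the proposition, as before, is the Markov duality supplied by Theorem \ref{theo:dualitysegment}.
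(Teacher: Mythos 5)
Your proposal is correct and follows essentially the same route as the paper: the conditions are verified via the duality of Theorem \ref{theo:dualitysegment} exactly as in Proposition \ref{prop:trueevolution}, and uniqueness is obtained from the finite-dimensionality of the dual system on $\Weyl{n}_{\llbracket 1,\ell\rrbracket}$, dispensing with any growth hypothesis like \eqref{eq:boundedgrowth}. Your extra remarks (the explicit semigroup solution $e^{t\Lpart^{(n,\rho_0,\rho_\ell)}}$ and the justification for differentiating under the expectation despite the unbounded component $N_\ell$) are sound elaborations of what the paper leaves implicit.
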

\begin{proof}
This is a direct consequence of the duality from Theorem \ref{theo:dualitysegment}, as in the proof of Proposition \ref{prop:trueevolution}. Here, the uniqueness of the system of equations is simply due to the fact that the system of ODEs above is finite-dimensional, hence there is no need to impose a condition such as \eqref{eq:boundedgrowth}.
\end{proof}

Again, the duality of operators from Theorem \ref{theo:dualitysegment} can be reformulated as an equality of expectations. 
\begin{corollary}
	Assume that \eqref{eq:Liggettcondition2} holds and fix $\vec x\in \Weyl{n}_{\llbracket 1,\ell\rrbracket}$ and some initial state $(\eta, N)\in \lbrace 0,1\rbrace^{\llbracket 1,\ell-1\rrbracket}\times \mathbb Z_{\geq 0}$. Then, 
	\begin{equation}
		\EE^{\eta, N}\left[H(\eta(t), N_{\ell}(t);  \vec x) \right]  = \mathbb E_{\vec x}\left[H(\eta, N; \vec x(t))\; e^{-(\p-\q)\rho_0\int_0^t \mathds{1}_{x_1(s)=1}ds + (\p-\q)\rho_\ell\int_0^t \mathds{1}_{x_n(s)=\ell}ds} \right],	
		\label{eq:dualityreweighted2}
	\end{equation}
	where $\mathbb E^{\eta, N}$ denotes the expectation with respect to open ASEP on the segment with initial condition $\eta,N$, and $\mathbb E_{\vec x}$ denotes the expectation with respect to the Markov process $\vec x(t)$ on $\Weyl{n}_{\llbracket 1,\ell\rrbracket}$ with generator $\Lpart^{(n, 0,0)}$, that is the generator of  ASEP on $\llbracket 1,\ell\rrbracket$ with closed boundary conditions and jump rates $\p$ to the left and $\q$ to the right. 
	\label{cor:segment}
\end{corollary}
\begin{proof}
Observe that $\Lpart^{(n, \rho_0, \rho_\ell)}=\Lpart^{(n, 0, 0)}+ V^{(\rho_0, \rho_{\ell})}$ where 
$$V^{(\rho_0, \rho_{\ell})}(\vec x) = -(\p-\q)\rho_0 \mathds{1}_{x_1=1} + (\p-\q)\rho_\ell  \mathds{1}_{x_n=\ell}.$$ Since  $V^{(\rho_0, \rho_{\ell})}$ is  a bounded function, the proof of Corollary \ref{cor:segment} is the same as in Corollary \ref{cor:halfspace}. 
\end{proof}
 \begin{remark} Unlike Section \ref{sec:markovdualityhalfspace}, the identity \eqref{eq:dualityreweighted2} cannot be interpreted as an equality of expectations between open ASEP on the segment and some killed process. The exponential factor $e^{-(\p-\q)\rho_0\int_0^t \mathds{1}_{x_1(s)=1}ds}$ in \eqref{eq:dualityreweighted2} could still be interpreted as a killing at rate $\rho(\p-\q)$ when a particle is present at site $1$, but the exponential factor $e^{ (\p-\q)\rho_\ell\int_0^t \mathds{1}_{x_n(s)=\ell}ds}$ cannot. The probabilistic interpretation of such exponential term being less appealing, we have not attempted to rephrase Theorem \ref{theo:dualitysegment} as a Markov duality.  
\end{remark}

As in Section \ref{sec:ODE}, Condition (1) of Proposition \ref{prop:trueevolutionsegment} can be rewritten as Conditions (1) and (2) in Proposition \ref{prop:freeevolutionu} with some boundary conditions when $x_1=1$ or $x_n=\ell$. We obtain the following characterization. 
\begin{proposition} Assume that \eqref{eq:Liggettcondition2} holds and fix some initial state $(\eta,N)\in \lbrace 0,1\rbrace^{\llbracket 1,\ell-1\rrbracket}\times \Z_{\geq 0}$. 
	If the function $ u : \R_{+} \times \llbracket 0,\ell+1\rrbracket^n  \to \R$  solves
	\begin{enumerate}
		\item For all $\vec x \in \llbracket 1,\ell\rrbracket^n$ and $t\in \R_+$, 
		\begin{equation*} \frac{d}{dt} u(t;\vec x) = \Delta^{\p, \q}  u(t;\vec x);
		\end{equation*}
		\item For all $\vec x \in \llbracket 1,\ell-1\rrbracket^n$ such that for some $i\in \lbrace 1, \dots,  n-1\rbrace $,  $x_{i+1}=x_i+1$, we have 
		\begin{equation*} \p u(t; \vec x_{i+1}^-) + \q  u(t; \vec x_i^+) = (\p+\q)  u(t; \vec x);
		\end{equation*}
		\item For all $t\in \R_+$ and $(x_2, \dots, x_n)\in \Weyl{n-1}_{\llbracket 2,\ell\rrbracket}$, 
		\begin{equation}
	  u(t;0,x_2,\dots)=(\rho_0 q +1-\rho_0) u(t;1,x_2,\dots) ;
		\label{eq:boundaryconditionforuleft}
		\end{equation}
		\item For all $t\in \R_+$ and $(x_1, \dots, x_{n-1})\in \Weyl{n-1}_{\llbracket 1,\ell-1\rrbracket}$, 
		\begin{equation}
		  u(t;x_1, \dots, x_{n-1}, \ell+1) = (\rho_\ell/q+1-\rho_\ell) u(t;x_1, \dots, x_{n-1}, \ell);
		\label{eq:boundaryconditionforuright}
		\end{equation}
		\item For any $\vec x\in \Weyl{n}_{\llbracket 1,\ell\rrbracket}$, $u(0; \vec x)=H(\eta,N;\vec x)$;
	\end{enumerate}
	then  for all  $\vec x \in \Weyl{n}_{\llbracket 1,\ell\rrbracket}$ and $t\in \R_+$, $u(t,\vec x) = \EE^{\eta,N}[H(\eta(t),N_\ell(t);\vec x))]$.
	\label{prop:freeevolutionsegment}
\end{proposition}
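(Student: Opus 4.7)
The plan is to mirror the proof of Proposition \ref{prop:freeevolutionu}: we show that conditions (1)--(5) force $u$ to satisfy the hypotheses of Proposition \ref{prop:trueevolutionsegment}, and then invoke that proposition. The domain is now finite, so we do not need a growth bound and there is no analogue of the earlier condition (4).

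First, I would argue that conditions (1) and (2) are the standard Bethe-ansatz reformulation of the closed-boundary bulk generator, exactly as in \cite[Section 2]{schutz1997exact} or \cite[Proposition 4.10]{borodin2012duality}. Namely, extending $u(t;\vec x)$ to $\llbracket 0,\ell\rrbracket^n$ via (1), whenever $\vec x \in \Weyl{n}_{\llbracket 1,\ell-1 \rrbracket}$ lies strictly inside the Weyl chamber one has, using (2) to collapse the terms where consecutive coordinates become adjacent, the identity $\tfrac{d}{dt} u(t;\vec x) = \Lpart^{(n)} u(t;\vec x)$, with $\Lpart^{(n)}$ defined in \eqref{eq:defLpart}. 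This step is purely combinatorial and is inherited verbatim from the half-line argument.

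Next, I would check that conditions (3) and (4) convert $\Lpart^{(n)}$ into $\Lpart^{(n,\rho_0,\rho_\ell)}$ at each boundary. For the left boundary, compare the term $\p\bigl(u(t;\vec x_1^-) - u(t;\vec x)\bigr)$ appearing in $\Delta^{\p,\q}$ when $x_1 = 1$ (which would involve the fictitious value $u(t;0,x_2,\dots)$) with the prescribed boundary term $-(\p-\q)\rho_0 u(t;\vec x)$ in \eqref{eq:defLpartsegment}. A short algebraic manipulation using Liggett's condition \eqref{eq:Liggettcondition2} shows that \eqref{eq:boundaryconditionforuleft} is exactly the relation needed to make these two expressions coincide. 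The analogous computation with $\q$ and $\rho_\ell$, using \eqref{eq:boundaryconditionforuright} at $x_n = \ell$, handles the right boundary. Combined with the interior identity, this yields condition (1) of Proposition \ref{prop:trueevolutionsegment} for all $\vec x \in \Weyl{n}_{\llbracket 1,\ell \rrbracket}$.

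Finally, condition (5) is identical to condition (2) of Proposition \ref{prop:trueevolutionsegment}, so by the uniqueness stated there (which is immediate since the state space $\Weyl{n}_{\llbracket 1,\ell \rrbracket}$ is finite and the ODE system is therefore a finite linear system), one concludes $u(t;\vec x) = \mathbb E^{\eta,N}[H(\eta(t),N_\ell(t);\vec x)]$. The only subtlety is the verification of the algebraic identity translating \eqref{eq:boundaryconditionforuleft} and \eqref{eq:boundaryconditionforuright} into the modification of $\Lpart^{(n)}$, but this is a routine computation using \eqref{eq:Liggettcondition2}, so I do not anticipate a genuine obstacle.
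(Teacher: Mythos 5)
Your proposal is correct and follows essentially the same route as the paper: the paper's proof simply notes that, as in Proposition \ref{prop:freeevolutionu}, conditions (1)--(4) combine to give condition (1) of Proposition \ref{prop:trueevolutionsegment} (the bulk reformulation via \cite{schutz1997exact} plus the boundary identities turning $\Lpart^{(n)}$ into $\Lpart^{(n,\rho_0,\rho_\ell)}$), and then invokes that proposition, whose uniqueness is automatic since the ODE system is finite-dimensional. Your additional remarks about not needing a growth bound and the boundary algebra are exactly the points the paper leaves implicit.
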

\begin{proof}As in the proof of Proposition \ref{prop:freeevolutionu}, the conditions (1) (2) (3) and (4) imply Condition (1) of Proposition \ref{prop:trueevolutionsegment}, so that it suffices to apply Proposition \ref{prop:trueevolutionsegment}. 
\end{proof} 
\begin{remark}
The system of ODEs from Proposition \ref{prop:freeevolutionsegment} is a priori amenable for Bethe ansatz, though the hypothetical Bethe ansatz solution would involve the roots of some Bethe equations (needed to enforce the boundary condition at $x=\ell$). It is not presently clear whether there exist simple contour integral formulas for $\EE^{\eta,N}[H(\eta(t),N_\ell(t);\vec x))]$ and we leave this for future consideration. 
\end{remark}
\begin{remark}
	In principle, there should be another approach to solving the systems in Proposition  \ref{prop:freeevolutionsegment} of Proposition \ref{prop:trueevolutionsegment}: directly diagonalize the generator $\Letasegment$ using Bethe ansatz. For ASEP on a ring, Bethe ansatz solutions involve Bethe roots and it is possible to write down some explicit integral formulas \cite{liu2020integral}. On a segment,  eigenfunctions have been studied for instance in \cite{de2005bethe, simon2009construction, crampe2010eigenvectors} and their structure is more complicated. Completeness of the eigenbasis has not been mathematically proven, and due to the complexity of the structure, which increases with $\ell$,  it is not clear how to use them to calculate observables such as $\EE^{\eta,N}[H(\eta(t),N_\ell(t);\vec x))]$ or transition probabilities. 
\end{remark}

\begin{remark}
The open ASEP on a segment has been extensively studied using the Matrix Product Ansatz, a method introduced in \cite{derrida1993exact} to express the stationary distribution of occupation variables $(\eta_1, \dots, \eta_{\ell-1})$. This method allows to compute the expected value of the integrated current for a system starting from stationary initial condition. The method has been refined in a number of directions and some physics references also obtain information on the large deviations of the current (see for instance the thesis \cite{lazarescu2013exact} and references therein). Nevertheless, it does not seem possible yet to compute the exact distribution of $(\eta(t),N_\ell(t))$ using Matrix Product Ansatz, nor the observables $\EE^{\eta,N}[H(\eta(t),N_\ell(t);\vec x))]$.  
\end{remark}

\section{KPZ equation limit}  
\label{sec:KPZ}
In this Section, we provide an application of the formula from Theorem \ref{theo:momentformula}, using the convergence of the half-line ASEP to the KPZ equation in the weakly asymmetric scaling.  The KPZ equation, introduced in \cite{kardar1986dynamic}, is the nonlinear stochastic PDE 
\begin{equation}
\partial_t h(t,x) = \frac{1}{2} \partial_{xx}h(t,x) +\frac 1 2 \left( \partial_x h(t,x)\right)^2+\xi(t,x),
\label{eq:KPZ}
\end{equation}
where $\xi$ is a space-time white noise. For the equation on $\R$, we say that $h(t,x)$,   is a solution if for all $t>0, x\in \R$ ,  $h(t,x)=\log Z(t,x)$ where $Z(t,x)$ is a solution of the multiplicative noise stochastic heat equation (SHE)
\begin{equation}
\partial_t Z(t,x) = \frac{1}{2} \partial_{xx} Z(t,x) +Z(t,x)\xi(t,x),
\label{eq:SHE}
\end{equation}
for which solutions can be defined using stochastic calculus \cite{bertini1995stochastic}. Using Proposition \ref{prop:dualityfull-space} for $n=1$, letting $\mathcal Z(t,x)=q^{N_x(t)}$, one can show that $\mathcal Z(t,x)$ satisfies some discrete approximation of \eqref{eq:SHE}. This was originally observed in \cite{gartner1987convergence}, and then used in \cite{bertini1997stochastic} to prove that, after appropriate scaling (see Section \ref{sec:convASEPKPZ}), $\mathcal Z(t,x)$ weakly converges to a solution of \eqref{eq:SHE} as a function of space and time. 

\subsection{Convergence of half-line ASEP to the KPZ equation on $\R_{>0}$}
\label{sec:convASEPKPZ}
In this section, we are mostly interested in the KPZ equation on $\R_{>0}$. To ensure uniqueness of solutions, one needs to impose a Neumann type boundary condition at $x=0$. As for the KPZ equation on $\R$, we define a solution to the KPZ equation on $\R_{>0}$ through the SHE on $\mathbb R_{>0}$. 
\begin{definition}[\cite{corwin2016open, parekh2017kpz}] We say that $h(t,x)\in C(\mathbb R_{>0}\times C( \mathbb R_{>0}))$ solves the KPZ equation on $\mathbb R_{>0}$ with boundary parameter $A\in \mathbb R$ and narrow wedge initial data if for all $(t,x)\in \mathbb R_{>0}^2$, $h(t,x)=\log Z(t,x)$ where $Z(t,x)$ solves the SHE on $\R_{>0}$ with Robin boundary parameter $A$ and delta initial data, that is 
\begin{equation}
\begin{cases} \partial_t Z(t,x) = \frac{1}{2} \partial_{xx} Z(t,x) +Z(t,x)\xi(t,x),\\
\partial_x Z(t,x) \vert_{x=0} = A Z(t,0) \text{ for all } t>0, \\
\lim_{t\to 0} Z(t,x) =\ratedelta_0(x),
\end{cases}
\label{eq:SHEhalfspace}
\end{equation}
where $\ratedelta_0$ is a Dirac mass at zero. This equation can be made sense of in terms of the heat kernel on $\mathbb R_{>0}$ that satisfies the same boundary condition at $x=0$, we refer to \cite[Definition 2.5]{corwin2016open} and \cite[Proposition 4.3]{parekh2017kpz} for details. 
\label{def:KPZequationhalf-space}
\end{definition}
The convergence of ASEP to the KPZ equation from  \cite{bertini1997stochastic} was extended to half-line ASEP and open ASEP on a segment in \cite{corwin2016open}. The result was restricted there to a certain class of initial data and $A\geq 0$, but the convergence result was extended in \cite{parekh2017kpz} to the narrow wedge initial data and arbitrary $A\in \R$. 
 
Consider half-line ASEP from Definition \ref{def:halflineASEP} with empty initial data and, following  \cite{corwin2016open},  let us  scale 
\begin{equation}
\p=\frac{1}{2}e^{\sqrt{\eps}},\;\;\; \q=\frac{1}{2}e^{-\sqrt{\eps}},\;\;\; \rho=\frac{1}{2}+\sqrt{\eps}\left(\frac{1}{4}+\frac{A}{2} \right).
\label{eq:scalings}
\end{equation} 
We further  define the functions 
\begin{equation} \mathcal Z_t(x) = q^{N_{x+1}(t)-x/2}e^{(\p+\q-1)t}, \;\;\;Z^{\eps}(t,x) = \eps^{-1/2} \mathcal Z_{\eps^{-2}t}(\eps^{-1}x).
\label{eq:defZepsilon}
\end{equation}
Then, \cite[Theorem 1.4]{parekh2017kpz} (see also \cite[Theorem 2.17]{corwin2016open} for another type of initial data) proved that, as $\eps\to 0$, $Z^{\eps}$ weakly converges as a space-time process  to the unique mild solution to the SHE on $\R_{>0}$ with boundary parameter $A$ and delta initial data (Definition \ref{def:KPZequationhalf-space}).

\begin{remark} The boundary condition \eqref{eq:boundaryconditionforu}, when $n=1$, can be rewritten as a discrete Robin type boundary condition 
	$ \rho u(t;1) = \frac{u(t;1) - u(t;0)}{1-q}.$
	One may check that formally plugging the scalings \eqref{eq:scalings}, we obtain the boundary condition $\partial_x \mathbb E\left[ Z(t,x)\right]\vert_{x=0}=A\  \mathbb E[Z(t,0)]$.
\end{remark}

\subsection{Moment formulas} 
In this section, we use Theorem \ref{theo:momentformula} and the convergence recalled in Section \ref{sec:convASEPKPZ}  to prove a formula for the moments of the SHE on $\mathbb R_{>0}$. 
\begin{theorem}
Fix $A>0$, $n\geq 1$, and let $Z(t,x)$ denote the solution of the SHE on $\mathbb R_{>0}$ with Robin boundary parameter $A$ and delta initial data (Definition \ref{def:KPZequationhalf-space}). For any $0\leqslant x_1 \leq \dots\leq x_n $, 
\begin{equation} 
\mathbb E\left[ \prod_{i=1}^n Z(t,x_i) \right] = 2^n\int_{r_1+\I\R} \frac{dw_1}{2\I\pi} \dots \int_{r_n+\I\R} \frac{dw_w}{2\I\pi} \prod_{i<j}\frac{w_i-w_j}{w_i-w_j+1}\frac{w_i+w_j}{w_i+w_j-1} \prod_{i=1}^n e^{\frac{t w_i^2}{2}-x_i w_i}\frac{w_i}{A+w_i}
\label{eq:momentKPZhalfspace}
\end{equation}  
where $0=r_1<r_2-1<\dots <r_n-n+1$.
\label{theo:momentsKPZ} 
\end{theorem}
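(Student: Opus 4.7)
The plan is to derive \eqref{eq:momentKPZhalfspace} by passing to the $\eps\to 0$ limit in the $q$-moment formula of Theorem \ref{theo:momentformula}, using the convergence $Z^\eps\to Z$ recalled in Section \ref{sec:convASEPKPZ}. Writing $X_i:=\lfloor\eps^{-1}x_i\rfloor+1$, \eqref{eq:defZepsilon} and Theorem \ref{theo:momentformula} give
\begin{equation*}
\EE\Bigl[\prod_{i=1}^n Z^\eps(t,x_i)\Bigr] = \eps^{-n/2}\,q^{\sum_i X_i/2}\,e^{n(\p+\q-1)\eps^{-2}t}\,v_n\bigl(\eps^{-2}t;X_1,\dots,X_n\bigr).
\end{equation*}
The argument splits into (i) identifying the $\eps\to 0$ limit of the right-hand side with the integral in \eqref{eq:momentKPZhalfspace}, and (ii) converting $Z^\eps\Rightarrow Z$ into convergence of joint moments via uniform $L^p$-bounds. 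For (ii) one extracts uniform bounds $\sup_\eps \EE[Z^\eps(t,x)^p]<\infty$ for each integer $p$ directly from the $p$-th order case of Theorem \ref{theo:momentformula} by bounding the integrand on $\mathcal{C}$ uniformly in $\eps$; together with positivity of $Z^\eps$ and Hölder's inequality, this yields uniform integrability of $\prod_i Z^\eps(t,x_i)$ and hence moment convergence.

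For (i), I would apply a change of variables $z_i = \zeta_0\, e^{\sqrt{\eps}\,\ell(w_i)}$ (with $\zeta_0$ at the critical point of $F_{X_i}$ and $\ell$ an affine function), deforming the contour $\mathcal{C}$ onto a vertical contour $r_i+\I\R$ in a rescaled $w$-plane with $r_i$ chosen to separate the poles at $w_j-w_i=1$ from those at $w_i+w_j=1$. The constants are chosen so that
\begin{equation*}
\frac{z_i-z_j}{qz_i-z_j}\to\frac{w_i-w_j}{w_i-w_j+1},\qquad \frac{1-qz_iz_j}{1-z_iz_j}\to\frac{w_i+w_j}{w_i+w_j-1},
\end{equation*}
matching the rational factor in \eqref{eq:momentKPZhalfspace}. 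The three $O(\eps^{-1})$ contributions coming from the prefactor $e^{n(\p+\q-1)\eps^{-2}t}$, from the essential-singularity exponential $\exp\bigl((1-q)^2z_i\p t\eps^{-2}/[(1-z_i)(1-qz_i)]\bigr)$, and from the combination $q^{X_i/2}\bigl((1-z_i)/(1-qz_i)\bigr)^{X_i}$ telescope after Taylor expansion, leaving the finite Gaussian $e^{tw_i^2/2-x_iw_i}$; the boundary factor $\rho/(\rho+(1-\rho)z_i)$ becomes $w_i/(A+w_i)$ under the scaling $\rho=1/2+\sqrt{\eps}(1/4+A/2)$; the Jacobian combined with the $\eps^{-n/2}$ prefactor and the $\prod_i z_i^{-1}$ present in \eqref{eq:defphi} produces the overall constant $2^n$.

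The main technical obstacle is to show that in the sum \eqref{eq:deffunctionv} only the term indexed by the partition $\lambda=(1,\dots,1)$ (with the diagram containing no arrows, hence no residues) survives in the limit. Each diagram $I$ with at least one arrow encodes a multi-residue of the form $z_a=qz_b$ or $z_b=1/z_a$, which under the rescaling becomes a pinching of two $w$-variables onto an affine locus ($w_a=w_b+1$ or $w_a+w_b=1$); each such pinching removes one integration and contributes a Jacobian factor of order $\sqrt{\eps}$, while the residue of the remaining integrand stays bounded. Thus all contributions with $\ell(\lambda)<n$ are suppressed by at least a factor $\sqrt{\eps}^{\,n-\ell(\lambda)}$ and vanish in the limit, yielding \eqref{eq:momentKPZhalfspace}. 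The hardest step is justifying both the contour deformation (ensuring no pole of $F_{X_i}$ is crossed, which uses the hypothesis $A>0$ and the consequent inequality $\rho/(1-\rho)>1/\sqrt q$ from Theorem \ref{theo:momentformula}) and the interchange of limit and integration on the unbounded vertical contours, for which one appeals to the Gaussian decay of $|e^{tw_i^2/2}|$ in $|\Im w_i|$ to dominate the polynomial and rational factors uniformly in $\eps$.
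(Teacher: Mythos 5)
Your overall strategy (take the $\eps\to 0$ limit in Theorem \ref{theo:momentformula}, identify the limit of the integrals, and upgrade $Z^\eps\Rightarrow Z$ to moment convergence via uniform integrability) is the same as the paper's, and the treatment of the change of variables, the rational factors, the boundary factor $w/(A+w)$, and the role of $A>0$ in keeping the pole of $F_x$ outside $\mathcal C$ are all correct in spirit. However, there is a genuine error at the central step: the claim that every diagram with at least one arrow is suppressed by $\sqrt{\eps}^{\,n-\ell(\lambda)}$, so that only the $\lambda=(1,\dots,1)$ term survives. This is false. Each residue does cost a factor of order $\sqrt\eps$ (from $(z_j-qz_i)$ or the Jacobian you mention), but the per-variable prefactor $\eps^{-1/2}$ in \eqref{eq:defZepsilon} -- equivalently the blow-up of the boundary factor $\rho/(\rho+(1-\rho)z_j)\sim \eps^{-1/2}$ near $z_j=-1$ under the scaling \eqref{eq:scalings} -- exactly compensates it, just as it compensates the $\sqrt\eps$ Jacobian of each genuinely integrated variable. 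Hence all terms of \eqref{eq:deffunctionv} are $O(1)$ and survive: the limit is the full sum over partitions and diagrams, with residues now taken at $w_j=w_i+1$ and $w_j=-w_i+1$ (this is Proposition \ref{prop:limit} in the paper, proved by dominated convergence with the nontrivial real-part estimates of Lemma \ref{lem:quadraticdecay} for residue-pinned strings of variables). A quick sanity check that your conclusion cannot be right: if only the arrow-free term survived, the answer would be $2^n$ times an integral with \emph{all} contours on $\I\R$, not the nested contours $0=r_1<r_2-1<\dots<r_n-n+1$ of \eqref{eq:momentKPZhalfspace}; the discrepancy is precisely the residues at $w_i-w_j+1=0$ and $w_i+w_j-1=0$ crossed when deforming nested contours to a common line.

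Consequently your proposal is also missing the final identification step: after the limit one must resum the residue expansion into the nested contour integral, which the paper does by invoking the (rigorous) Step 1 of \cite[Section 7.3]{borodin2016directed}, i.e.\ Eq.~(45) there with $p=0$. Relatedly, you cannot deform the prelimit contours of Theorem \ref{theo:momentformula} onto nested vertical lines: the paper stresses that the nested contour ansatz fails for half-line ASEP, which is exactly why the prelimit formula is a residue expansion on a single circle $\mathcal C$, and why the nested structure only emerges after taking $\eps\to0$ and resumming. Two smaller points: your uniform-in-$\eps$ domination and $L^p$ bounds require controlling the real part of the $\eps^{-2}t$ exponential along strings $(w+k-1,\dots,w,1-w,\dots,\ell-w)$ on a contour of length $\eps^{-1/2}$ (the content of Lemma \ref{lem:quadraticdecay}), which is more delicate than the limiting Gaussian decay you invoke; and the passage from strictly increasing $x_i$ in Theorem \ref{theo:momentformula} to weak inequalities $x_1\leq\dots\leq x_n$ needs a short argument (the paper uses $\vert \mathcal Z_t(x)/\mathcal Z_t(x+1)\vert \leq (\p/\q)^{3/2}$), which your write-up does not address.
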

\begin{remark}The formula \eqref{eq:momentKPZhalfspace} has been anticipated in \cite{borodin2016directed} using the replica method, although it is not clear that the arguments presented in \cite{borodin2016directed} can be turned into a fully rigorous proof.  
	However,  one can approximate the SHE by the partition function of directed polymer models in a half-space. The convergence of directed polymer partition functions to the SHE with Robin boundary was studied in \cite{wu2018intermediate, parekh2019positive, barraquand2022stationaryloggamma}. Exact moment formulas for the log-gamma polymer in a half-quadrant are established in \cite{barraquand2018half}, see in particular \cite[Proposition 7.1]{barraquand2018half}. Taking a scaling limit of that formula should yield another proof of Theorem \ref{theo:momentsKPZ}. 
	\label{rem:otherapproach}
\end{remark}
\begin{remark}
Although the moments of $Z(t,x)$ grow too fast to determine its distribution, the exact probability distribution of $Z(t,0)$ was (non-rigorously) computed from the moments in \cite{borodin2016directed} for $A=0$ and in \cite{Krajenbrink2020} for arbitrary $A$, using conjectural combinatorial simplifications or not fully rigorous methods. In the special case $A=-1/2$ however, the distribution was rigorously computed in \cite{barraquand2018stochastic}, through the stochastic six-vertex model and using a symmetric functions identity that reduces the problem to the asymptotic analysis of Pfaffian Schur measures \cite{borodin2005eynard}, in the spirit of \cite{borodin2016asep}. More recently, in the general case, i.e. for arbitrary $A\in \R$, a Fredholm Pfaffian formula (equivalent to \cite{Krajenbrink2020}) was finally established in \cite{imamura2022solvable} through the log-gamma polymer and   $q$-Whittaker measures,  using a symmetric functions identity from \cite{imamura2021skew} that reduces the problem to the asymptotic analysis of free boundary Schur measures \cite{betea2018free}.
\end{remark}
Before proving Theorem \ref{theo:momentsKPZ}, we establish the following proposition, which is exactly the limit as $\eps\to 0$ of the formula from Theorem \ref{theo:momentformula}, under the scalings \eqref{eq:scalings} and \eqref{eq:defZepsilon}. 
\begin{proposition}
	For any $A>0$ and $0< x_1 <\dots <x_k$,  under the scalings \eqref{eq:scalings}, we have 
	\begin{multline}
		\lim_{\eps\to 0} \mathbb E\left[\prod_{i=1}^n Z^{\eps}(t,x_i) \right] =  2^n\sum_{\lambda \vdash n} \frac{1}{m_1!m_2!\dots}\sum_{I\in S(\lambda)} \int_{\I\R} \frac{dw_{i_{\mu_1}}}{2\I\pi}  \int_{\I\R} \frac{dw_{j_{\mu_2}}}{2\I\pi} \dots  \\ 
		\Res{I} \left\lbrace \prod_{i<j}\frac{w_i-w_j}{w_i-w_j+1}\frac{w_i+w_j}{w_i+w_j-1} \prod_{i=1}^n e^{\frac{t w_i^2}{2}-x_i w_i}\frac{w_i}{A+w_i}\right\rbrace, 
		\label{eq:limitmoments}
	\end{multline} 
	where the vertical line $\I\R$ is oriented from bottom to top and $\Res{I}$ now means that we take residues at $w_j=w_i+1$ whenever the arrows $i\leftarrowplus j$ or $j\rightarrowplus i$ are present in the diagram $I$, and we take a residue at $w_j=-w_i+1$ whenever the arrow $i\leftarrowminus j$ is present in the diagram $I$. 	
	
	Furthermore, for any fixed integer $n$, there exist $\eps_0>0$ and a constant $C=C(n,t)$ such that for any $\eps\in (0,\eps_0)$,  
	\begin{equation}  \left\vert  \mathbb E\left[\prod_{i=1}^n Z^{\eps}(t,x_i) \right]\right\vert \leq C. 
		\label{eq:uniformbound}
	\end{equation}
	\label{prop:limit}
\end{proposition}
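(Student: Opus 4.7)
The plan is to start from the explicit formula \eqref{eq:momentZepsilon}, perform the change of variables $z_i = -q^{w_i-1/2}$ in each integration variable, verify pointwise convergence of the integrand term-by-term (over partitions and diagrams), and then justify passage to the limit by dominated convergence. Since $q = e^{-2\sqrt\eps}$, the map $z = -q^{w-1/2}$ sends the circle $\mathcal C$ of radius $1/\sqrt q$ to the vertical segment $\I[-\pi/(2\sqrt\eps), \pi/(2\sqrt\eps)]$ (reversing orientation once), transforms $dz_i/z_i$ into $\ln(q)\,dw_i = -2\sqrt\eps\,dw_i$, and moves the poles at $z_j = qz_i$ and $z_j = 1/z_i$ to $w_j = w_i+1$ and $w_j = 1-w_i$ respectively. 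Each residue in the $w$-variables differs from its $z$-counterpart by a Jacobian-type factor of order $\sqrt\eps$ that I would compute via $1-q^{a+u} \sim 2\sqrt\eps(a+u)$ near the pole.

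Next I would verify the $\sqrt\eps$ bookkeeping: there are $\ell(\lambda)$ integrations (each contributing $-2\sqrt\eps$) and $n-\ell(\lambda)$ residues (each contributing a factor of order $\sqrt\eps$), which combine with the prefactor $\eps^{-n/2}$ to cancel all powers of $\eps$ exactly and yield the global $2^n$, while the sign $(-1)^{n-\ell(\lambda)}$ from \eqref{eq:momentZepsilon} is absorbed by the $(-1)$'s produced by the residues and the orientation reversal. Pointwise convergence of the remaining integrand is then direct: Taylor-expanding the scalings \eqref{eq:scalings} and the exponential in $G_{x_j}$ gives $G_{x_j}(-q^{w_j-1/2}) \to 2 e^{tw_j^2/2 - x_j w_j}\frac{w_j}{A+w_j}$, while \eqref{eq:limitrationalfactors} handles the symmetric rational prefactors. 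The condition $\rho > 1/(1+\sqrt q)$ required by Theorem \ref{theo:momentformula} reduces under \eqref{eq:scalings} to $A > 0$, which is the standing hypothesis.

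To interchange limit and integration I would construct an integrable dominating function on the prelimit contour. On the image of $\mathcal C$, writing $w = r + \I s$ with $r$ uniformly bounded in $\eps$ (the residues only shift real parts by bounded amounts of the form $\pm 1$, $\pm 2$, etc.), the factor $|e^{tw^2/2}| = e^{t(r^2-s^2)/2}$ provides Gaussian decay in $|s|$; the poles $w = -A$ and $w_i - w_j = -1$, $w_i + w_j = 1$ stay a positive distance from the contour (taking $\eps$ small and using $A>0$), so the rational and $w/(A+w)$ factors are bounded uniformly in $\eps$. Explicit $\eps$-dependent modulus bounds on $G_{x_j}$ and on the $q$-deformed rational factors, obtained by monotone Taylor estimates in $\sqrt\eps$, supply the domination. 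The uniform bound \eqref{eq:uniformbound} then follows from integrating the same dominating function, yielding a constant $C(n,t)$ depending only on $n$ and $t$.

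The main obstacle is the uniform control of the integrand near the point $w = 0$ (equivalently $z=-1/\sqrt q$, the point where multiple prelimit poles from $z=1$, $z=-\rho/(1-\rho)$, and the denominators $1-z_iz_j$, $qz_i-z_j$ coalesce in the limit) and at large $|w|$ on the truncated vertical segment. This requires Taylor-expanding $F_{x}$ carefully enough to absorb the boundary factor $\rho/(\rho+(1-\rho)z)$ into the $w/(A+w)$ limit while still controlling the error, and checking that the residue-prefactor bounds are uniform as $\eps\to 0$. A secondary bookkeeping obstacle is verifying that after all sign contributions (orientation reversal, $\ln(q)<0$, residue Jacobians, and $(-1)^{n-\ell(\lambda)}$) combine, the resulting formula has the positive sign and the simple $1/(m_1!m_2!\dots)$ symmetry factor displayed in \eqref{eq:limitmoments}, which I would check diagram-by-diagram using the structure of $S(\lambda)$.
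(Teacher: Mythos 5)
Your overall route is the same as the paper's: the substitution $z_i=-q^{w_i-1/2}$, the bookkeeping in which the $\ell(\lambda)$ integrations and the $n-\ell(\lambda)$ residues each produce a factor of order $\sqrt{\eps}$ that cancels the prefactor $\eps^{-n/2}$ and yields $2^n$ while absorbing $(-1)^{n-\ell(\lambda)}$, the pointwise limit of each term of \eqref{eq:momentZepsilon}, the observation that $\rho>\tfrac{1}{1+\sqrt q}$ becomes $A>0$, and a dominated convergence argument whose dominating function also gives \eqref{eq:uniformbound}. The genuine gap is that the domination is asserted rather than obtained, and the way you propose to obtain it would not work as stated. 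You invoke the Gaussian decay of the \emph{limiting} factor $\vert e^{tw^2/2}\vert$, but dominated convergence needs an integrable bound on the \emph{prelimit} integrand, uniform in $\eps$, on the whole segment $\I[-\tfrac{\pi}{2}\eps^{-1/2},\tfrac{\pi}{2}\eps^{-1/2}]$ and at all residue-shifted points. The prelimit exponent is governed by the function $H(w)$ of \eqref{eq:defh}, and near the endpoints of the segment, where $2\sqrt{\eps}\,y$ is of order $\pi$, a Taylor expansion in $\sqrt{\eps}$ is not available: there $\cos(2\sqrt{\eps}y)$ is close to $-1$, the denominators in $\Real[H]$ become small, and one must verify by exact computation that the real part still obeys a bound of the form $C(C-y^2)$. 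Worse, for a residue term the relevant quantity is a sum $\sum_i\Real[H(w_i)]$ over a string $(w+k-1,\dots,w)$ or $(w+k-1,\dots,w,1-w,\dots,\ell-w)$, and controlling that sum uniformly is exactly what requires the telescoping identity \eqref{eq:structureofh} for $h(z)=\frac{(1+\sqrt q z)^2}{(1-z)(1-qz)}$; this is the content of Lemma \ref{lem:quadraticdecay}, and nothing in your plan supplies a substitute for it. "Monotone Taylor estimates in $\sqrt\eps$" do not cover the endpoint regime nor the string sums.

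Two smaller points. First, you locate the main danger at $w=0$, but that region is harmless: with $A>0$ fixed, the poles at $w\approx -A$ and $w\approx 1/2$ (the images of $z=-\rho/(1-\rho)$ and $z=1$) stay at distance bounded below from $\I\R$ uniformly in $\eps$, and the cross ratios converge to the bounded factors in \eqref{eq:limitrationalfactors}; the delicate regions are the endpoints of the growing segment and the shifted residue strings just described. Second, the factor $\bigl(\tfrac{\sqrt{\p\q}(1-z)}{\p-\q z}\bigr)^{\eps^{-1}x+1}$, whose exponent diverges, needs its own uniform-in-$\eps$ bound (this is \eqref{eq:boundx} in the paper, and it must be checked on the residue-shifted circles, not just on $\mathcal C$); your plan should state this estimate explicitly rather than fold it into generic Taylor expansion of $G_{x_j}$.
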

\begin{proof}
	Using the definition of $Z^{\eps}(t,x)$ in \eqref{eq:defZepsilon} and the formula from  Theorem \ref{theo:momentformula}, we obtain that for $0< x_1 <\dots <x_k$, 
	\begin{multline} \mathbb E\left[\prod_{i=1}^n Z^{\eps}(t,x_i) \right]=\eps^{-k/2}\left(\frac{\q}{\p} \right)^{\frac{n(n-1)}{2}} \\ \sum_{\lambda \vdash n} \frac{(-1)^{n-\ell(\lambda)}}{m_1!m_2!\dots}\sum_{I\in S(\lambda)} \oint_{\mathcal C} \frac{dz_{i_{\mu_1}}}{2\I\pi}  \oint_{\mathcal C} \frac{dz_{j_{\mu_2}}}{2\I\pi} \dots \; \Res{I} \left\lbrace  \prod_{i<j} \frac{z_i-z_j}{qz_i-z_j} \frac{q^{-1}-z_iz_j}{1-z_iz_j}\prod_{j=1}^n\frac{G_{x_j}(z_j)}{z_j}\right\rbrace,
		\label{eq:momentZepsilon}
	\end{multline} 
	where 
	\begin{equation*} G_x(z) = \frac{\p-\q z^2}{\p-\p z}  \exp\left( \frac{(\sqrt{\p}-\sqrt{\q})^2(\sqrt{\p}+\sqrt{\q} z)^2  \eps^{-2}t}{(1-z)(\p-\q z)}\right) \left( \frac{\sqrt{\p\q}(1-z)}{\p-\q z} \right)^{\eps^{-1}x+1}\frac{\rho}{\rho + (1-\rho)z}.\end{equation*}
	The residues appearing in \eqref{eq:momentZepsilon} all corresponds to simple poles and are easy to compute, so that the formula is explicit. Indeed, for a meromorphic function $f$ of $z_i$ and $z_j$ with a simple pole of order $1$ at $z_j=qz_i$, $\Res{z_j\to qz_i}f(z_i, z_j)$ can be simply computed by replacing $z_j$ by  $qz_i$ in $(z_j-qz_i)f(z_i,z_j)$. Residues of the form $\Res{z_i\to 1/z_j}$ are similarly easily computed. 
	Under the change of variables $z=-1/\sqrt{q}q^{w}$ (recall that $-1/\sqrt{q}q^{w}= -\sqrt{\p/\q}(\q/\p)^{w}=-e^{\sqrt{\eps}(1-2w)}$), 
	\begin{equation*} \frac{G_x(z)dz}{z} = e^{\frac{t w^2}{2}-xw}\frac{wdw}{A+w}+ O(\sqrt{\eps}).\end{equation*}
	Further, under the change of variables $z_i=-1/\sqrt{q}q^{w_i}$, 
	\begin{equation} \frac{z_i-z_j}{qz_i-z_j} \frac{q^{-1}-z_iz_j}{1-z_iz_j}\xrightarrow[\eps\to 0]{} \frac{w_i-w_j}{w_i-w_j+1}\frac{w_i+w_j}{w_i+w_j-1}.
		\label{eq:limitrationalfactors}
	\end{equation}
	In order to ensure that the condition  $\rho>\frac{1}{1+\sqrt{q}}$ in Theorem \ref{theo:momentformula} is satisfied for $\eps$ in a neighborhood of $0$ under the scalings \eqref{eq:scalings}, it is necessary and sufficient to impose $A>0$. Under the change of variables, the contour $ \mathcal C$ become the vertical segment $\I[\frac{-\pi}{2} \eps^{-1/2}, \frac{\pi}{2} \eps^{-1/2}]$.  Taking into account the Jacobian of the change of variables and the orientation of contours, this shows that  taking the pointwise limit of integrands in \eqref{eq:momentZepsilon} leads to \eqref{eq:limitmoments}. 
	In order to prove that the limit holds, we will apply the dominated convergence theorem.
	Let us estimate each of the factors appearing in \eqref{eq:momentZepsilon} under the change of variables $z=-1/\sqrt{q}q^{w}$ considered above.
	We observe that when $z$ varies over a circle around $0$ with radius less than $(1+q^{-1})/2$, the function $z\mapsto \left\vert \frac{\sqrt{\p\q}(1-z)}{\p-\q z} \right\vert$ is maximized when $z$ has minimal real part. In particular, if $z$ varies over a circle of radius less than $1/\sqrt{q}$, 
	\begin{equation}\left\vert \left( \frac{\sqrt{\p\q}(1-z)}{\p-\q z} \right)^{\eps^{-1}x+1}\right\vert 
		\leqslant 1.
		\label{eq:boundx}
	\end{equation}
	The rational factors \eqref{eq:limitrationalfactors} are also easily bounded by a constant so that 
	\begin{equation}
		\left\vert \Res{I} \left\lbrace  \prod_{i<j} \frac{z_i-z_j}{qz_i-z_j} \frac{q^{-1}-z_iz_j}{1-z_iz_j}\right\rbrace \right\vert <C.
		\label{eq:boundRes}
	\end{equation} 
	Then, to estimate the exponential factor, we need to control the real part of the function 
	\begin{equation}  \frac{(\sqrt{\p}+\sqrt{\q} z)^2  \eps^{-1}}{(1-z)(\p-\q z)} = \frac{(1-e^{-2\eps^{1/2}w})^2\eps^{-1}}{(1+e^{-\eps^{1/2}(2w-1)})(1+e^{-\eps^{1/2}(2w+1)})}=:H(w).
		\label{eq:defh}
	\end{equation}
	Under the change of variables that we are considering, the contour $\mathcal C$ for $z$ becomes $\I[\frac{-\pi}{2} \eps^{-1/2}, \frac{\pi}{2} \eps^{-1/2}]$ for $w$. However, since we are taking residues in \eqref{eq:momentZepsilon} we not only need to estimate $\Real[H(w)]$ along the contour, but also expressions of the form $\Real[H(w_1)]+\dots+ \Real[H(w_k)]$, where $\vec w$ is of the form $(w+k-1, w+k-2, \dots, w)$ or of the form  $(w+k-1, \dots, w, 1-w, 2-w, \dots, \ell-w)$ for some integers $k,\ell\geq 0$ such that $k+\ell\leq n$, and $w\in \I\R$. This is the purpose of the following lemma. 
	\begin{lemma}Fix positive integers $k,\ell\leq n$ such that $k+\ell\leq n$. There exist a constant $C$ so that uniformly for $\eps$ in a neighborhood of $0$, for all $y\in [\frac{-\pi}{2} \eps^{-1/2}, \frac{\pi}{2} \eps^{-1/2}]$, 
		\begin{equation}
			\sum_{i=0}^{k-1}\Real[H(\I y+i)]\leq  C (C-y^2)
			\label{eq:lemma}
		\end{equation}
		and for a sequence $(w_1, \dots, w_{k+\ell})$ of the form $(w+k-1, \dots, w, 1-w, 2-w, \dots, \ell-w)$ with $w=\I y$, 
		\begin{equation}
			\sum_{i=1}^{k+\ell}\Real[H(w_i)]\leq  C (C-y^2).
			\label{eq:lemma2}
		\end{equation}
		\label{lem:quadraticdecay}
	\end{lemma}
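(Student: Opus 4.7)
The plan is to first simplify $H$ into a form amenable to explicit real-part computations. Multiplying numerator and denominator of \eqref{eq:defh} by $e^{4\sqrt\eps w}$ and using the identities $(e^u-1)^2 = 4e^u\sinh^2(u/2)$ and $(e^u+e^a)(e^u+e^{-a}) = 4e^u\cosh(u/2-a/2)\cosh(u/2+a/2)$, one obtains
\begin{equation*}
H(w) \;=\; \frac{\sinh^2(\sqrt\eps\, w)}{\eps\,\cosh(\sqrt\eps(w-1/2))\,\cosh(\sqrt\eps(w+1/2))} \;=\; \frac{\cosh(2\sqrt\eps\, w)-1}{\eps\,(\cosh(2\sqrt\eps\, w)+\cosh\sqrt\eps)}.
\end{equation*}
This makes transparent the pointwise limit $H(w)\to w^2$ as $\eps\to 0$.

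The plan is then to establish a bound of the form $\Real[H(\I y+j)] \leq A_j - B_j y^2$ for each fixed integer $0\leq j\leq n$, uniformly in $\eps$ small and $y\in[-\pi/(2\sqrt\eps),\pi/(2\sqrt\eps)]$, with constants $A_j,B_j>0$. The identity $\Real[H(\I y+j)] = \Real[H(-\I y+j)]$ (which follows from the expression below, since it depends on $y$ only through $\cos(2\sqrt\eps y)$ and $\sin^2(2\sqrt\eps y)$) then lets one sum such bounds over at most $k+\ell \leq 2n$ indices to obtain both \eqref{eq:lemma} and \eqref{eq:lemma2} after absorbing all constants into one large $C$.

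Concretely, setting $\beta := 2\sqrt\eps\, y \in [-\pi,\pi]$, $\alpha_j := 2\sqrt\eps\, j = O(\sqrt\eps)$, $c := \cosh\alpha_j\cos\beta$, $s := \sinh\alpha_j\sin\beta$, and using $\cosh(2\sqrt\eps(\I y+j)) = c + \I s$, one computes
\begin{equation*}
\eps\,\Real[H(\I y+j)] \;=\; \frac{(c-1)(c+\cosh\sqrt\eps)+s^2}{(c+\cosh\sqrt\eps)^2+s^2}.
\end{equation*}
I would split into two regimes. In the bulk regime $|\beta|\leq \pi-\eta$ for a fixed small $\eta>0$, the denominator is bounded below by a positive constant, and Taylor expansion in $\sqrt\eps$ (using $\alpha_j=O(\sqrt\eps)$) gives numerator $= (\cos\beta-1)(\cos\beta+1)+O(\eps) = -\sin^2\beta + O(\eps)$. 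Combined with the bound $\sin^2\beta \geq c_0\beta^2$ on $[-\pi+\eta,\pi-\eta]$ and the identity $\beta^2 = 4\eps\, y^2$, this yields $\Real[H(\I y+j)] \leq -c_1 y^2 + C$ for some $c_1,C>0$. In the boundary regime $|\beta| > \pi-\eta$, writing $\beta = \pm(\pi-\gamma)$ with $|\gamma|$ small, one has $c+\cosh\sqrt\eps = O(\gamma^2+\eps+\alpha_j^2)$ and $(c-1) \to -2$, so the ratio is large negative of order $-1/(\eps(\gamma^2+\eps+\alpha_j^2))$, which dominates $-y^2 \leq -\pi^2/(4\eps) + O(1)$; the bound follows a fortiori.

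The main obstacle is the careful bookkeeping in the boundary regime: one must verify that the $s^2$ term in the numerator (which depends on $\alpha_j$ and can be of the same order as $(c-1)(c+\cosh\sqrt\eps)$) does not cause cancellations that destroy the sign, and that all estimates are uniform in $j\in\{0,1,\ldots,n\}$. A slightly cleaner alternative is to rescale once and for all by $\beta=2\sqrt\eps y$, $\alpha_j=2\sqrt\eps j$, expressing everything as a function on the compact rectangle $[-\pi,\pi]\times[0,2\sqrt\eps n]$ and showing by a direct compactness argument that $\eps\,\Real[H(\I y+j)] + c\beta^2 \leq C\eps$ uniformly, which after dividing by $\eps$ and summing gives the claim.
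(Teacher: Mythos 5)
Your algebraic reduction of $H$ to $\frac{\cosh(2\sqrt\eps\,w)-1}{\eps(\cosh(2\sqrt\eps\,w)+\cosh\sqrt\eps)}$ is correct, and so is your formula for $\eps\,\Real[H(\I y+j)]$ in terms of $c$ and $s$. But the core of your plan — a term-by-term bound $\Real[H(\I y+j)]\le A_j-B_jy^2$, uniform in $\eps$, to be summed over $j$ — is false for every $j\ge1$, and the failure is precisely in your boundary regime. Writing $\beta=2\sqrt\eps\,y=\pm(\pi-\gamma)$ and $\alpha_j=2\sqrt\eps\,j$, the factor $c+\cosh\sqrt\eps=\cosh\sqrt\eps-\cosh\alpha_j\cos\gamma$ is not of one sign: since $\alpha_j>\sqrt\eps$ for $j\ge1$, it becomes \emph{negative} when $\gamma^2\lesssim\eps(4j^2-1)$, so $(c-1)(c+\cosh\sqrt\eps)+s^2$ is then positive, not negative as you assert. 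At $\beta=\pi$ one gets exactly $\eps\,\Real[H(\I y+j)]=\frac{\cosh\alpha_j+1}{\cosh\alpha_j-\cosh\sqrt\eps}\approx\frac{4}{\eps(4j^2-1)}$, i.e. $\Real[H(\I y+j)]\asymp+\eps^{-2}$ on a window of $y$ of width $O(1)$ around $\pm\frac{\pi}{2}\eps^{-1/2}$, which lies inside the range of the lemma, whereas $C(C-y^2)\asymp-\eps^{-1}$ there. So no term-wise bound (not even by a constant) can hold uniformly in $\eps$, and the same objection defeats your ``cleaner'' compactness variant $\eps\,\Real[H(\I y+j)]+c\beta^2\le C\eps$.

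The inequalities \eqref{eq:lemma}--\eqref{eq:lemma2} are true only for the sum, because of a large cancellation near the endpoints: the $j=0$ term is $\approx-4/\eps^2$ there and absorbs the positive $\eps^{-2}$-size contributions of the $j\ge1$ terms (for instance for $k=2$ the sum is $\approx-8/(3\eps^2)$). This cancellation is exactly what the paper's proof is organized around: the partial-fraction identity \eqref{eq:structureofh} makes the sum telescope, $\sum_{i=0}^{k-1}\bigl(h(q^iz)-1\bigr)=\frac{(1+\sqrt q)^2}{1-q}\bigl(\frac{1}{1-z}-\frac{1}{1-q^kz}\bigr)$, and similarly for the mixed sequence in \eqref{eq:lemma2}, so that only the real part of a single explicit expression has to be bounded, which is then done with elementary estimates on $\cos$, $\cosh$, $\sinh$ and $e^x$. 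To repair your argument you would have to group the $j=0$ term with the $j\ge1$ terms and exhibit this cancellation explicitly — at which point you are essentially redoing the telescoping computation. (Your symmetry observation $\Real[H(j+\I y)]=\Real[H(j-\I y)]$, used to reduce \eqref{eq:lemma2} to terms of \eqref{eq:lemma} type, is correct but does not address the sign problem.)
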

	\begin{proof}
		Let us consider first the case  $k=1$ for simplicity. The real part of $H[\I y]$ can be computed explicitly as
		\begin{equation*} \Real[H(\I y)] = \frac{-2 \sin(\eps^{1/2}y)^2\eps^{-1}}{\cos(2\sqrt\eps y )+\cosh(\sqrt\eps)},\end{equation*}
		so that, using $\sin(x)\geq x/2$ for $x\in [0, \pi/2]$ and the fact that the denominator can be bounded by a constant,  we obtain that $\Real[H(\I y)] \leq - y^2/2$.  
		
		To prove the more general estimates \eqref{eq:lemma} and \eqref{eq:lemma2}, it is useful to observe that, 
		letting $h(z)=\frac{(1+\sqrt{q}z)^2}{(1-z)(1-q z)}$, we have 
		\begin{equation}
			h(z)-1  = \frac{(1+\sqrt{q})^2}{1-q}\left( \frac{1}{1-z}- \frac{1}{1-qz}\right).
			\label{eq:structureofh}
		\end{equation} 
		Hence, we have the following telescopic sum simplification: 
		\begin{equation*} \sum_{i=0}^{k-1} h(q^iz)-1 = \frac{(1+\sqrt{q})^2}{1-q}\left( \frac{1}{1-z}- \frac{1}{1-q^kz}\right).
		\end{equation*} 
		Using the change of variables $z=-1/\sqrt{q}q^{\I y}$, $q=e^{-2\sqrt\eps}$, we obtain that \eqref{eq:lemma} can be explicitly computed as 
		\begin{equation*} \sum_{i=0}^{k-1}\Real[H(Iy+i)] = k\eps^{-1}+\Real\left[ \eps^{-1}\frac{(1+e^{-\sqrt\eps})^2}{1-e^{-2\sqrt\eps}} \frac{-(1-e^{-2\sqrt\eps k})e^{\sqrt\eps(1-2\I y)}}{(1+e^{\sqrt\eps(1-2\I y)})(1+e^{\sqrt\eps(1-2k-2\I y)})}\right].\end{equation*}
		Using Mathematica, we find that the real part can be computed as 
		\begin{equation*} k\eps^{-1} -\eps^{-1} \frac{\cosh(\sqrt\eps/2)\sinh(\sqrt\eps k)(\cosh(\sqrt\eps k)+\cosh(\sqrt\eps(k-1)\cos(2\sqrt\eps y))}{\sinh(\sqrt\eps/2)(\cos(2\sqrt\eps y)+\cosh(\eps))(\cos(2\sqrt\eps y)+\cosh(\eps(1-2k)))}.\end{equation*}
		This quantity behaves at first order when $\eps\to 0$ as $k\left( \frac{(k-1)(2k-1)}{6}-y^2\right)$, which is consistent with the right-hand-side of \eqref{eq:lemma}. To obtain an estimate uniformly in $\eps$, we  
		use the bounds $e^x\leq 1+x+x^2$, $\sinh(x)\leq x+x^3$ and $1+x^2/2\leq \cosh(x)\leq 1+x^2$ valid on some interval  $(0,c)$ for some fixed constant $c>0$, and the bounds $1-x^2\leq \cos(x)\leq 1-x^2/2+x^4$, valid for all  $x\in (-\pi, \pi)$. After simplifications, we obtain that there exists a constant $C$ so that \eqref{eq:lemma} is satisfied for all $y\in [\frac{-\pi}{2} \eps^{-1/2}, \frac{\pi}{2} \eps^{-1/2}]$, as long as $\eps$ is small enough so that $2\eps^{1/2}k\leq c$ (the condition $2\eps^{1/2}k\leq c$ is necessary to apply the bounds above). This concludes the proof of \eqref{eq:lemma}.

		Now, for a vector $(z_1, \dots, z_{k+\ell})$ of the form $(q^{k-1}z, \dots,qz ,  z, 1/z, q/z, \dots,q^{\ell-1}z),$ we have 
		\begin{equation*} \sum_{i=1}^{k+\ell} h(z_i)-1 = \frac{(1+\sqrt{q})^2}{1-q}\left(1-  \frac{1}{1-q^{\ell}/z}- \frac{1}{1-q^kz}\right).
		\end{equation*} 
		Using the change of variables $z=-1/\sqrt{q}q^{\I y}$, $q=e^{-2\sqrt\eps}$, we obtain that \eqref{eq:lemma2} can be explicitly computed as 
		\begin{multline}
			\sum_{i=0}^{k+\ell}\Real[H(w_i)] = (k+\ell)\eps^{-1}   - \Real\left[\eps^{-1} \frac{(1+e^{-\sqrt\eps})^2}{1-e^{-2\sqrt\eps}} \right. \\ \left. \times \frac{(1-e^{-2\eps^{1/2}(k+\ell)})\left(1+e^{2\eps^{1/2}(k+\ell)}+\left( e^{(2k-1)\eps^{1/2}}+e^{(2\ell+1)\eps^{1/2}}\right) \cos(2\eps^{1/2}y)\right)}{4(\cos(2\eps^{1/2}y)+\cosh(\eps^{1/2}(2k-1)) )(\cos(2\eps^{1/2}y)+\cosh(\eps^{1/2}(2\ell+1)) )}  \right].
		\end{multline}
		Again, using the same bounds as above on the functions $\cos(x)$ and $\cosh(x)$ and $e^x$, we obtain that there exists a constant $C$ so that \eqref{eq:lemma2} is satisfied for all $y\in [\frac{-\pi}{2} \eps^{-1/2}, \frac{\pi}{2} \eps^{-1/2}]$. This concludes the proof of the Lemma \ref{lem:quadraticdecay}. 
	\end{proof}
	At this point, using Lemma \ref{lem:quadraticdecay} and the bounds \eqref{eq:boundx} and \eqref{eq:boundRes} above, the integrand of each term in the sum \eqref{eq:momentZepsilon} can be dominated by a function of the form $Ce^{tC(C-y^2)}$. Hence, by the dominated convergence theorem, we conclude that \eqref{eq:limitmoments} holds. The same bound leads to \eqref{eq:uniformbound}, which concludes the proof of Proposition \ref{prop:limit}. 
\end{proof}
Now we can prove Theorem \ref{theo:momentsKPZ}. 
\begin{proof}[Proof of Theorem \ref{theo:momentsKPZ}]
The structure of the sum over residues appearing in the formula \eqref{eq:limitmoments} in Proposition \ref{prop:limit} is exactly the same as \cite[Section 7.3]{borodin2016directed}, up to the slight difference of conventions explained in Remark \ref{rem:doublecounting}. In particular, using \cite[Eq. (45)]{borodin2016directed} with $p=0$, we obtain that \eqref{eq:limitmoments} can be rewritten as 
\begin{equation}
	\lim_{\eps\to 0} \mathbb E\left[\prod_{i=1}^n Z^{\eps}(t,x_i) \right] =  2^n\int_{r_1+\I\R} \frac{dw_1}{2\I\pi} \dots \int_{r_n+\I\R} \frac{dw_w}{2\I\pi} \prod_{i<j}\frac{w_i-w_j}{w_i-w_j+1}\frac{w_i+w_j}{w_i+w_j-1} \prod_{i=1}^n e^{\frac{t w_i^2}{2}-x_i w_i}\frac{w_i}{A+w_i}
	\label{eq:limitmomentsnested}
\end{equation}
where $0=r_1<r_2-1<\dots <r_n-n+1$, as in the statement of the Theorem. 
We will not recall here the arguments from \cite{borodin2016directed} that lead to the equality between \eqref{eq:limitmoments} and \eqref{eq:limitmomentsnested}. The details can be found in \cite[Section 7.3, Step 1]{borodin2016directed}. Let us simply point out that the purpose of \cite[Section 7.3, Step 1]{borodin2016directed}  was to show that after moving all contours in \eqref{eq:limitmomentsnested} to $\I\R$, one obtains a complicated sums over residues, and that the non-trivial residues are exactly those in the right-hand-side of \eqref{eq:limitmoments}. Let us also stress that the step 2 in \cite[Section 7.3]{borodin2016directed} relies on some unproven conjectural claim, but the arguments in step 1, that we are using here, are complete and do not rely on any unproven claim. 

To finish the proof, we need to relax the strict monotonicity condition on the $x_i$ and prove that the right-hand-side of \eqref{eq:limitmomentsnested} indeed corresponds to the mixed moments of the SHE.  We observe that by definition, $\vert Z_t(x)/Z_t(x+1)\vert \leq \q/\p$ so that for any $0\leqslant x_1 \leq \dots\leq x_n $ 
\begin{equation*} \lim_{\eps\to 0} \mathbb E\left[\prod_{i=1}^n Z^{\eps}(t,x_i) \right] =\lim_{\eps\to 0} \mathbb E\left[\prod_{i=1}^n Z^{\eps}(t,x_i+\eps i) \right],\end{equation*}
which is also given by the expression in the right-hand-side of \eqref{eq:limitmomentsnested}. Hence, we now have a formula for the limit of all mixed moments of $Z^{\eps}(t,x)$. 
Furthermore, the bound \eqref{eq:uniformbound} from Proposition \ref{prop:limit} shows that the sequence of random variables  $\prod_{i=1}^n Z^{\eps}(t,x_i)$ is uniformly integrable, and  since it converges weakly to $\prod_{i=1}^n Z(t,x_i)$ by \cite[Theorem 1.4]{parekh2017kpz}, we deduce that 
\begin{equation*}\lim_{\eps\to 0} \mathbb E\left[\prod_{i=1}^n Z^{\eps}(t,x_i)\right]=\mathbb E\left[\prod_{i=1}^n Z(t,x_i)\right]=\eqref{eq:momentKPZhalfspace}.\end{equation*}
We refer to \cite{ghosal2018moments} for a very similar argument related to the moments of the full-space SHE. This concludes the proof of Theorem \ref{theo:momentsKPZ}. 
\end{proof}
%
\subsection{KPZ equation on $\R_{>0}$ with Dirichlet initial data}
Another case of interest is when  $\rho$ is not scaled close to $1/2$ but fixed, for instance $\rho=1$. This corresponds to sending $A\to +\infty$, so it is natural to expect that one obtains a SHE with Dirichlet boundary condition in the limit. Such boundary condition was considered in \cite{parekh2019positive} in the context of directed polymer models, for Brownian initial data. However, when the half-line ASEP is started from the empty initial condition, it is not entirely clear a priori what should be the initial condition for the KPZ equation in the limit. In the paper \cite{barraquand2022weakly}, to appear,  it will be shown that when $\rho=1$, the correct scaling to consider is 
\begin{equation}
	Z_{Dir}^{\eps}(t,x) = \eps^{-1} \mathcal Z_{\eps^{-2}t}(\eps^{-1}x),
	\label{eq:defZDir}
\end{equation}
where $Z_t(x)$ is still defined as in \eqref{eq:defZepsilon}. In order to rigorously identify the initial data, a sharp estimate on the second moment of $Z_{Dir}^{\eps}(t,x)$ as $\eps \to 0$ is needed. It turns out that usual techniques, based on the fact that $Z_t(x)$ satisfies a discrete variant of the SHE, does not seem to adapt easily to this case where the initial condition is very singular. Theorem \ref{theo:momentformula} however, allows to prove such an estimate. In particular, we obtain the following result which is used in \cite{barraquand2022weakly} to obtain second moment bounds. 
\begin{proposition} Scaling $\p=\frac{1}{2}e^{\sqrt{\eps}}$ and $\q=\frac{1}{2}e^{-\sqrt{\eps}}$ and letting $\rho=1$, we have for any $0\leq x_1\leq \dots \leq x_n$, 
	\begin{equation}
		\lim_{\eps\to 0} \mathbb E\left[\prod_{i=1}^n Z_{Dir}^{\eps}(t,x_i)\right]= 4^n\int_{r_1+\I\R} \frac{dw_1}{2\I\pi} \dots \int_{r_n+\I\R} \frac{dw_w}{2\I\pi} \prod_{i<j}\frac{w_i-w_j}{w_i-w_j+1}\frac{w_i+w_j}{w_i+w_j-1} \prod_{i=1}^n w_i e^{\frac{t w_i^2}{2}-x_i w_i},
		\label{eq:momentKPZDirichlet}
	\end{equation}  
	where $0=r_1<r_2-1<\dots <r_n-n+1$.
\end{proposition}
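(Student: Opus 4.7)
The strategy is to adapt the proof of Theorem \ref{theo:momentsKPZ} essentially verbatim, starting from the exact $q$-moment formula of Theorem \ref{theo:momentformula}. A key observation is that $\rho=1$ still lies in the admissible range $\left(\frac{1}{1+\sqrt{q}},1\right]$ required by Theorem \ref{theo:momentformula}, so under the scalings $\p=\frac{1}{2}e^{\sqrt{\eps}}$, $\q=\frac{1}{2}e^{-\sqrt{\eps}}$ and for strictly increasing $0< x_1<\dots<x_n$, one obtains an expression analogous to \eqref{eq:momentZepsilon} for $\EE[\prod_{i=1}^n Z_{Dir}^{\eps}(t,x_i)]$, with the new prefactor $\eps^{-n}$ (from the scaling \eqref{eq:defZDir}) replacing $\eps^{-n/2}$ and with the factor $\frac{\rho}{\rho+(1-\rho)z}$ in $G_x(z)$ being identically equal to $1$.

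The next step is to perform the change of variables $z=-q^{w-1/2}$ and track the powers of $\eps$ carefully. Each integration variable contributes three $O(\sqrt{\eps})$ factors: $\frac{1-(q/\p)z^2}{1-z}\approx 2\sqrt{\eps}\,w$, the Jacobian $\frac{dz}{z}=-2\sqrt{\eps}\,dw$, and in the present setting the factor $\frac{\rho}{\rho+(1-\rho)z}\equiv 1$, in place of the $\frac{1}{2\sqrt{\eps}(A+w)}$ contribution from the Robin regime. The net contribution per variable is therefore $-4\eps\cdot w\, e^{\frac{tw^2}{2}-xw}\,dw$, so that together with the $\eps^{-n}$ prefactor and with the orientation reversal of the contours (a factor $(-1)^n$), one obtains the coefficient $4^n\prod w_i$ in the limit, rather than the $2^n\prod \frac{w_i}{A+w_i}$ of the Robin case. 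This confirms the leading factor in \eqref{eq:momentKPZDirichlet}; the other integrand pieces $\prod_{i<j}\frac{w_i-w_j}{w_i-w_j+1}\frac{w_i+w_j}{w_i+w_j-1}$ and $\prod e^{tw_i^2/2-x_iw_i}$ appear by the same calculation as in Theorem \ref{theo:momentsKPZ}.

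To pass to the limit one applies dominated convergence. The uniform estimates of Lemma \ref{lem:quadraticdecay} carry over directly: the exponential factor is handled identically, the bound \eqref{eq:boundx} on the power term is unchanged, the rational residues are bounded as in \eqref{eq:boundRes}, and the $\rho$-dependent factor, now identically $1$, is trivially controlled. Summing over partitions $\lambda\vdash n$ and diagrams $I\in S(\lambda)$, one then invokes \cite[Section 7.3, Step 1]{borodin2016directed} to rewrite the sum of multi-residues as the nested contour integral on the right-hand side of \eqref{eq:momentKPZDirichlet}; this step is purely formal and identical to its use in the Robin case, since the combinatorial structure of the residues in $w_i=w_j+1$ and $w_i+w_j=1$ is unaffected by the replacement of $\frac{w}{A+w}$ by $w$. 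Finally, the strict inequality $x_1<\dots<x_n$ is relaxed exactly as in Theorem \ref{theo:momentsKPZ} using the pointwise bound $|\mathcal Z_t(x)/\mathcal Z_t(x+1)|\leq(\p/\q)^{3/2}$.

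The only real subtlety, and the step that deserves careful bookkeeping, is the matching of powers of $\eps$ between the $\eps^{-n}$ prefactor and the integrand. The coefficient $4^n$ in \eqref{eq:momentKPZDirichlet} is, so to speak, built from the absence of the reservoir term $\frac{\rho}{\rho+(1-\rho)z}$: removing this singular factor costs one factor of $\sqrt{\eps}$ per variable, which is precisely compensated by the stronger normalization $\eps^{-1}$ (instead of $\eps^{-1/2}$) of $Z_{Dir}^{\eps}$. All the remaining analytic and combinatorial work is inherited from the proof of Theorem \ref{theo:momentsKPZ}.
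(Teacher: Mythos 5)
Your proposal is correct and follows the same route as the paper: the paper's proof is precisely a re-run of the proof of Theorem \ref{theo:momentsKPZ} with $\rho=1$, noting that the factor $\frac{\rho\,dz}{\rho+(1-\rho)z}$, which gave $\frac{dw}{A+w}$ in the Robin scaling, now contributes only $2\eps^{1/2}dw$, which accounts for the normalization $\eps^{-1}$ in \eqref{eq:defZDir} and the prefactor $4^n$ instead of $2^n$. Your bookkeeping of the powers of $\eps$ (net $-4\eps\,w\,e^{tw_i^2/2-x w}dw$ per variable against the $\eps^{-n}$ prefactor, with the dominated-convergence estimates and the residue-to-nested-contour rewriting inherited unchanged) matches the paper's argument, up to the harmless slip of calling the now-trivial boundary factor an $O(\sqrt{\eps})$ term.
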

\begin{proof}
	The proof is the same as the proof of Theorem \ref{theo:momentsKPZ}, except that in the proof of Theorem \ref{theo:momentsKPZ}, when scaling $\rho=1/2+\sqrt{\eps}(1/4+A/2)$, we had that under the change of variables  $z= -e^{\eps^{1/2}(1-2w)}$, 
	\begin{equation*} \frac{\rho dz}{\rho+(1-\rho)z}= \frac{dw}{A+w}+ O(\eps^{1/2}),
	\end{equation*}
	while for $\rho=1$, we have 
	\begin{equation*} \frac{\rho dz}{\rho+(1-\rho)z}= 2\eps^{1/2}+ O(\eps).
	\end{equation*}
	This explains the prefactor $\eps^{-1}$ in \eqref{eq:defZDir} instead of $\eps^{-1/2}$ and the prefactor $4^n$ instead of $2^n$ in front of the integrals in \eqref{eq:momentKPZDirichlet}.
\end{proof}
\begin{remark} 
Moment formulas for the SHE on $\R_{>0}$ with Dirichlet boundary condition, were  obtained in \cite{gueudre2012directed} using the Bethe ansatz. Although the derivation in \cite{gueudre2012directed} is not mathematically rigorous, it seems that the formula  \cite[Eq. (11)]{gueudre2012directed} matches with \eqref{eq:momentKPZDirichlet} modulo the use of \cite[Conjecture 5.2]{borodin2016directed}. The existence and uniqueness of solutions to the SHE with Dirichlet initial data was shown for Brownian type initial data in \cite{parekh2019positive} and the result is extended in the upcoming paper \cite{barraquand2022weakly} to cover the limit of the empty initial data, that is the initial condition considered in \cite{gueudre2012directed}.
\end{remark}

\renewcommand{\emph}[1]{\textit{#1}}

\bibliography{mainbiblio.bib}
\bibliographystyle{goodbibtexstyle} 
\end{document}